\numberwithin{equation}{section}
\newtheorem{theorem}{Theorem}[section]
\newtheorem{lemma}[theorem]{Lemma}
\newtheorem{definition}[theorem]{Definition}
\newtheorem{remark}[theorem]{Remark}
\newtheorem{proposition}[theorem]{Proposition}
\newtheorem{corollary}[theorem]{Corollary}
\newenvironment{Proof}{\removelastskip\par\medskip 
\noindent{\em Proof.}
\rm}{\penalty-20\null\hfill$\square$\par\medbreak}
\newcommand{\sfera}[1]{{\mathbb S}^{#1}}
\newcommand{\Q}{\mathbb{Q}}
\newcommand{\N}{\mathbb{N}}
\newcommand{\R}{\mathbb{R}}
\newcommand{\Leb}[1]{{\mathscr L}^{#1}} 
\newcommand{\F}{\mathcal{F}}
\renewcommand{\H}{\mathcal{H}}
\newcommand{\var}{\varphi}
\newcommand{\p}{\partial}
\newcommand{\BorelSets}[1]{\mathcal B(#1)}
\newcommand{\Probabilities}[1]{\mathscr P\bigl(#1\bigr)} 
\newcommand{\Measures}{\mathscr M} 
\newcommand{\Measuresp}[1]{\mathscr M_+\bigl(#1\bigr)} 
\renewcommand\div{\operatorname{div}}
\newcommand{\supp}{\operatorname{supp}}
\newcommand{\res}{\mathop{\hbox{\vrule height 7pt width .5pt depth 0pt
\vrule height .5pt width 6pt depth 0pt}}\nolimits}
 \newcommand{\bb}{{\mbox{\boldmath$b$}}}
 \newcommand{\cc}{{\mbox{\boldmath$c$}}}
 \newcommand{\tauV}{{\kern-3pt\tau}}
 \newcommand{\eff}{{\mbox{\scriptsize\rm eff}}}
 \newcommand{\sXX}{{\mbox{\scriptsize\boldmath$X$}}}
  \newcommand{\ssXX}{{\scriptscriptstyle \mbox{\scriptsize\boldmath$X$}}}
 \newcommand{\sYY}{{\mbox{\scriptsize\boldmath$Y$}}}
 \newcommand{\XX}{{\mbox{\boldmath$X$}}}
  \newcommand{\YY}{{\mbox{\boldmath$Y$}}}
 \newcommand{\oVVVk}{\overline{\mbox{\boldmath$V$}}\kern-3pt}
 \newcommand{\tVVVk}{\tilde{\mbox{\boldmath$V$}}\kern-3pt}
 \newcommand{\seeta}{{\mbox{\scriptsize\boldmath$\eta$}}}
 \newcommand{\ssigma}{{\mbox{\boldmath$\sigma$}}}
 \newcommand{\eeta}{{\mbox{\boldmath$\eta$}}}
 \newcommand{\eps}{\varepsilon}
\newcommand\rncp[1]{{\ring{\R}^{#1}}}
\begin{document}

\title{
On the Lagrangian structure of transport equations:\\ the Vlasov-Poisson system
}
\date{}

\author{Luigi Ambrosio\
   \thanks{Scuola Normale Superiore, Pisa. email: \textsf{luigi.ambrosio@sns.it}}
   \and
   Maria Colombo\
   \thanks{Scuola Normale Superiore, Pisa. email: \textsf{maria.colombo@sns.it}}
 \and
   Alessio Figalli\
   \thanks{University of Texas at Austin. email:
   \textsf{figalli@math.utexas.edu}}
   }

\maketitle

\begin{abstract}
The Vlasov-Poisson system is  a classical model in physics used to describe the evolution of particles under their self-consistent electric or gravitational field.
The existence of classical solutions is limited to dimensions $d\leq 3$ under strong assumptions on the initial data,
while weak solutions are known to exist under milder conditions. However, in the setting of weak solutions
it is unclear whether the Eulerian description provided by the equation physically corresponds to a Lagrangian evolution of the particles.
In this paper we develop several general tools concerning the Lagrangian structure of transport equations with non-smooth vector fields
and we apply these results: (1)  to show that weak solutions of Vlasov-Poisson are Lagrangian; (2) to obtain global existence of weak solutions under minimal assumptions on the initial data.
\end{abstract}

\let\thefootnote\relax\footnote{\newline
\textit{MSC-2010:} 35F25, 35Q83,34A12, 37C10.\\
\textit{Keywords:} Vlasov-Poisson, transport equations, Lagrangian flows, renormalized solutions.}

\tableofcontents

\section{Introduction}

The $d$-dimensional Vlasov-Poisson system describes the evolution of a nonnegative distribution function
$f:(0,\infty)\times \R^d \times \R^d \to [0,\infty)$ according to Vlasov's equation, under the action of a
self-consistent force determined by the Poisson's equation:
\begin{equation}
\label{eqn:VP}
\begin{cases}
\partial_t f_t + v\cdot \nabla_x f_t + E_t \cdot \nabla_v f_t = 0 & \qquad \mbox{in }(0,\infty) \times \R^d \times \R^d
\\
\displaystyle{\rho_t(x) = \int_{\R^d} f_t(x,v) \, dv} & \qquad \mbox{in }(0,\infty) \times \R^d
\\
\displaystyle{E_t(x) = \sigma \,c_d  \int_{\R^d} \rho_t(y) \frac{x-y}{|x-y|^d}}\, dy & \qquad \mbox{in }(0,\infty) \times \R^d.
\end{cases}
\end{equation}
Here $f_t(x,v)$ stands for the density of particles having position $x$ and velocity $v$ at time $t$, $\rho_t(x)$ is the distribution of particles in the physical space, $E_t=\sigma\, \nabla (\Delta^{-1}\rho_t)$ is the force field, $c_d>0$ is a dimensional constant
chosen in such a way that $c_d \,{\rm div}\left( \frac{x}{|x|^d}\right)=\delta_0$, and $\sigma \in \{\pm 1\}$.
The case $\sigma=1$ corresponds to the case of electrostatic forces between charged particles with the same sign
(repulsion) while $\sigma=-1$ corresponds to the gravitational case (attraction).


This system  appears in several physical models. For instance, when $\sigma=1$  it describes  in plasma physics the evolution of charged particles under their self-consistent electric field, while when $\sigma=-1$ the same system is used in astrophysics to describe the motion of galaxy clusters under the gravitational field. Many different models have been developed in connection with the Vlasov-Poisson equation: amongst others, we mention the relativistic version of \eqref{eqn:VP} (where the velocity of particles is given by $v/\sqrt{1+|v|^2}$) and the Vlasov-Maxwell system (which takes into account both the electric and magnetic fields of the Maxwell equations).

%
%
%
%

\smallskip

Regarding the existence of classical solutions, namely, solutions where all the relevant derivatives exist, the first contributions were given by Iordanskii \cite{iord} in dimension $1$, by Ukai and Okabe \cite{uo} in dimension $2$, and by Bardos and Degond \cite{bard} in dimension $3$ for small data. For symmetric initial data, more existence results have been proven in \cite{batt,woll,hors,scha} (see also the presentation in \cite{rein} for an overview of the topic and the references quoted therein).
Finally, in 1989 Pfaffelm\"oser \cite{pfaf} and Lions and Perthame \cite{lp} were able to prove global existence of classical solutions starting from general data. 
Moreover, in \cite{lp} the problem of uniqueness is also addressed:
there the authors show uniqueness in the class of solutions with bounded space densities in $[0,\infty) \times \R^3$
by considering the Lagrangian flow associated to the vector field $\bb_t(x,v) := (v, E_t(x))$  (see also \cite{loe} for a different proof based on stability in the Wasserstein metric).

\smallskip

The above mentioned results require strong assumptions on the initial data.
However, it would be very desirable to get global existence of solutions under much weaker conditions.
In the classical paper \cite{ars}, Arsen'ev proved global existence of weak solutions 
under the assumption that the initial datum is bounded and has finite kinetic energy (see also \cite{in}).
This result has then been improved in \cite{hh}, where the authors relaxed the boundedness assumption
on an $L^p$ bound for some suitable $p>1$.

Notice that these higher integrability assumptions are needed even to give a meaning to the equation
in the distributional sense:
indeed, when $f_t$ is merely $L^1$ the product $E_tf_t$ does not belong to $L^1_{\rm loc}$
(when $d=3$, for the term $E_tf_t$ to belong to $L^1_{\rm loc}$ one needs to have $f_t \in L^p$ with $p\geq  (12+2\sqrt{5})/11$, see for instance \cite{dpl}).
To overcome this difficulty, in \cite{dpl} the authors considered the concept of renormalized solutions
and obtained global existence in the case $\sigma=1$ under the assumption that the total energy is finite and $f_0\log(1+f_0) \in L^1$
(in the case $\sigma=-1$ they still need some $L^p$ assumption on $f$).
Also, under some suitable integrability assumptions on $f_t$, they can show that the concepts of weak and renormalized solutions are equivalent.

\smallskip

It is important to observe that the Vlasov-Poisson system has a transport structure which allows one to prove that, when the solutions is sufficiently smooth,
$f_t$ is transported along the characteristics of the vector field $\bb_t(x,v) := (v, E_t(x))$.
However, when dealing with weak or renormalized solutions, it is not clear that such a vector field defines a flow on the phase-space
and one loses the relation between the Eulerian and Lagrangian picture.

\smallskip
 
The goal of this paper is twofold: on the one hand we show that the Lagrangian picture is still valid
even for weak/renormalized solutions, and secondly we obtain global existence of weak solutions under minimal assumptions on the initial data.
Both results rely on a combination of the following tools, which we believe have their own interest:\\
(i) the local version of the DiPerna-Lions theory developed in \cite{amcofi};\\
(ii) the uniqueness of bounded compactly supported solutions to the continuity equation
for a special class of vector fields obtained by convolving a singular kernel with a measure (this is based on the techniques developed in \cite{boucrising,bbc-abstract}, see Section \ref{sect:singular integrals});\\
(iii) the fact that the concept of Lagrangian solution is {\em equivalent} to the one of renormalized solution  (see Sections \ref{sect:flow renormalized}
and \ref{sec:repres-flow});\\
(iv) a general superposition principle stating that every nonnegative solution of the continuity equation has a Lagrangian
structure without any regularity or growth assumption on the vector field (see Section \ref{sec:repres-flow}).

The above machinery is needed to prove a general result on the renormalization property for solutions of transport equations which is crucial in our proof.
However, from a PDE viewpoint this renormalization property is all we shall need, so in order to keep the presentation as much as possible independent of  this heavy machinery we shall organize the paper as
follows: in the next section we state our results keeping the presentation on the Lagrangian structure of solutions at an informal level.
Then in Sections~\ref{sect:flowVP} and~\ref{sec:VP-ex} we prove our PDE results without introducing the tools mentioned above but simply using the consequences of them, and we postpone points (i)-(iv) above to Sections~\ref{sect:flow} and~\ref{sec:repres-flow}.

\smallskip
\noindent {\bf Acknowledgement.} The authors are grateful to Anna Bohun, Fran\c cois Bouchut, and Gianluca Crippa for useful discussions on the topic of this paper.
The first and third author acknowledge the support
of the ERC ADG GeMeThNES, the second author has been partially supported by PRIN10 
grant from MIUR for the project Calculus of Variations and by the {Gruppo Nazionale per l'Analisi Matematica, la Probabilit\`a e le loro Applicazioni (GNAMPA)} of the {Istituto Nazionale di Alta Matematica (INdAM)}, the third author has been partially supported 
by NSF Grant DMS-1262411 and NSF Grant DMS-1361122.
This material is also based upon work supported by the National Science
Foundation under Grant No. 0932078 000, while the second and third authors were in residence 
at the Mathematical Sciences Research Institute in Berkeley, California, during the fall semester of 2013.

\section{Statement of the results}
\label{sect:statement}

As already observed in the introduction, the Vlasov-Poisson system has a transport structure: indeed we can rewrite it as 
\begin{equation}
\label{eq:VPtransport}
\partial_t f_t+\bb_t \cdot \nabla_{x,v}f_t=0,
\end{equation}
where the vector field $\bb_t(x,v) = (v, E_t(x)):\R^{2d}\to \R^{2d}$ is divergence-free,
and is coupled to $f_t$ via the relation $E_t = \sigma\, c_d\,  \rho_t \ast ({x}/{|x|^d})$.
Recalling that  $c_d \,{\rm div}\left( \frac{x}{|x|^d}\right)=\delta_0$, the vector field $E_t$ can also be found as 
$E_t = -\nabla_x V_t$ where the potential $V_t:(0,\infty) \times \R^d \to \R$ solves\footnote{This description
is correct in dimension $d \geq 3$ since the fundamental solution of the Laplacian decays at infinity, while in dimension $2$ the function $V_t$ is given by the convolution of $\rho_t$ with $-\frac{1}{2\pi}\log|x|$.}
\begin{equation}
\label{eqn:coupling}
-\Delta V_t= \sigma\, \rho_t \quad \mbox{in }\R^d, 
\qquad
\lim_{|x| \to \infty} V_t(x) = 0.
\end{equation}
Notice that, because the kernel ${x}/{|x|^d}$ is locally integrable, the electric field $E_t$ belongs to $L^1_{\rm loc}(\R^d;\R^d)$, therefore
$\bb_t \in L^1_{\rm loc}(\R^{2d};\R^{2d})$.

Now, since $\bb_t$ is divergence-free, the above equation can be rewritten as 
$$
\partial_t f_t+\div_{x,v} (\bb_t  f_t)=0,
$$
and the equation can be reinterpreted in the distributional sense provided the product $\bb_t f_t$ belongs to $L^1_{\rm loc}$. 
However, as mentioned in the introduction, this is not true if $f_t$ is merely $L^1$. To overcome this difficulty one notices that
if $f_t$ is a smooth solution of \eqref{eq:VPtransport} then also $\beta(f_t)$ is a solution for all $C^1$ functions $\beta:\R\to \R$; indeed
$$
\partial_t \beta(f_t)+\bb_t \cdot \nabla_{x,v}\beta(f_t)=\bigl[\partial_t f_t+\bb_t \cdot \nabla_{x,v}f_t\bigr]\,\beta'(f_t)=0,
$$
or equivalently (since $\div_{x,v} (\bb_t)=0$)
 \begin{equation}
 \label{defn:renorm-sol-VP}
\partial_t \beta(f_t)+{\rm div}_{x,v} (\bb_t \beta(f_t))=0.
 \end{equation} 
Notice that, since  $\beta$ is bounded by assumption, $\beta(f_t) \in L^\infty$ 
so $\bb_t\beta(f_t) \in L^1_{\rm loc}$ whenever $\bb_t \in L^1_{\rm loc}$, and \eqref{defn:renorm-sol-VP}
is well defined in the sense of distribution.
This motivates the introduction of the concept of renormalized solution \cite{dpl}:
\begin{definition}\label{defn:renorm-sol} {Let $\bb \in L^1_{\rm loc}([0,T]\times \R^{2d}; \R^{2d})$ be a Borel vector field. A Borel function~$f \in L^1_{\rm loc}([0,T]\times \R^{2d})$ is a {\em renormalized solution} of \eqref{eq:VPtransport} 
(starting from $f_0$) if \eqref{defn:renorm-sol-VP} holds
in the sense of distributions for every $\beta \in C^1\cap L^\infty(\R)$, namely, for every $\phi \in C^{1}_c ( [0,T) \times \R^{2d})$,
 \begin{multline}
 \label{defn:renorm-sol-VP2}
\int_{\R^{2d}} \phi_0(x,v) \beta (f_0(x,v))\, dx\,dv\\ +
 \int_0^T \! \int_{\R^{2d}} \bigl[\partial_t \phi_t(x,v) + \nabla_{x,v}\phi_t(x,v) \bb_t(x,v) \bigr]\beta(f_t(x,v)) \, dx \,dv \, dt  =0.
\end{multline}
In the case of the Vlasov-Poisson system, a function $f \in L^\infty((0,T);L^1(\R^{2d}))$ is a renormalized solution of \eqref{eqn:VP} 
(starting from $f_0$) if, setting
\begin{equation}
\label{eqn:diciamochie}
\rho_t(x) := \int_{\R^d} f_t(x,v) \, dv,\quad E_t := \sigma \,c_d  \int_{\R^d} \rho_t(y) \,\frac{x-y}{|x-y|^d}\, dy, \quad \bb_t(x,v) := (v, E_t(x)),
\end{equation}
the function $f_t$ solves \eqref{defn:renorm-sol-VP2} with the vector field $\bb_t$ given by \eqref{eqn:diciamochie}.
}
 \end{definition}
 Notice that, in the case of  the Vlasov-Poisson system, the global integrability of $f_t$ is needed to make sense of $\rho_t$ and $E_t$.

\smallskip

This definition takes care of the integrability of the term $E_tf_t$ appearing in the equation. However a second problem
comes when dealing with weak solutions: the vector field $\bb_t$ is not in general Lipschitz, so one cannot use the standard 
Cauchy-Lipschitz theory to construct a flow for such a vector field.
In the seminal paper \cite{lions}, DiPerna and Lions showed that, even for Sobolev vector fields, one can introduce a suitable 
notion of flow (this result has then been extended in several directions, see for instance \cite{ambrosio,crippade,boucrising}).
However this theory requires the a priori assumption that the trajectories of the flow do not blow up in finite time, which
 is expressed in terms of the vector field by the following global hypothesis:
\begin{equation}\label{eqn:globalDP}
\frac{|\bb_t|(x,v)}{1+|x|+|v|}\in L^1\bigl((0,T);L^1(\R^{2d})\bigr)+L^1\bigl((0,T);L^\infty(\R^{2d})\bigr).
\end{equation}
We notice that for Vlasov-Poisson (or more in general for any Hamiltonian system where $\bb_t(x,v)$ is of the form $(v,-\nabla V_t(x))$) the above assumption is satisfied if and only if
$$
\frac{E_t(x)}{1+|x|}=\frac{-\nabla V_t(x)}{1+|x|}\in L^1\bigl((0,T); L^\infty(\R^d;\R^d)\bigr).
$$
Unfortunately this is a very restrictive assumption, as it requires both 
some integrability and moment (in $v$) conditions on $f_t$, so we cannot apply the classical DiPerna-Lions' theory in this context.

In our recent paper  \cite{amcofi} we developed a local version of the DiPerna-Lions' theory under no global assumptions on the vector field,
and this will be a crucial tool for us to give a Lagrangian description of solutions.
More precisely, in Theorem~\ref{thm:repr-foliat} we shall first prove that every bounded  nonnegative solution of a continuity equation
can be always represented as a superposition of mass transported along integral curves of the vector field (notice that a priori these curves may split/intersect). Then, by a modification of the argument in \cite{bbc-abstract} we shall prove that for any vector field of the form $(v,\mu_t\ast x/|x|^d)$, with $\mu_t$ a time-dependent measure,
there is uniqueness of bounded compactly supported solutions of the continuity equation (see Theorem \ref{thm:bc}).
Finally, combining these facts with the theory from \cite{amcofi}, we can show that all bounded/renormalized solutions of Vlasov-Poisson are Lagrangian.

As mentioned before, to express the fact that solutions are Lagrangian we shall need to introduce the concept of Maximal Regular Flow.
Roughly speaking this is a (uniquely defined) incompressible flow on the phase-space composed of integral curves of $\bb_t$
that ``transport'' the density $f_t$ (notice that, since trajectories may blow-up in finite time, mass of $f_t$ can disappear at infinity and/or come from infinity, but it has to follow the integral curves of $\bb_t$).
However, since the definition is rather technical, in order to keep the presentation simpler we shall not introduce now the concept but postpone it to 
Section~\ref{sect:flow}.
This will leave the general reader with the intuitive concept of what is going on, and only the interested readers may decide to enter into the details of the definition and the proofs.

\smallskip

Our first main result shows that bounded or renormalized solutions of Vlasov-Poisson are Lagrangian.
As shown in Theorem \ref{thm:distr-on-integral-curves}, the concept of Lagrangian solutions
is a priori stronger than the one of renormalized solutions as {all Lagrangian solutions
of Vlasov-Poisson are renormalized}, but thanks to our general superposition principle (Theorem~\ref{thm:repr-foliat})
we can prove that the two concepts are actually equivalent.

Here and in the sequel we shall use the notation $L^1_+$ to denote the space of nonnegative integrable functions.
Also, by weakly continuous solutions we shall always mean that the map $t\mapsto \int_{\R^{2d}}f_t\,\var\,dx\,dv$ is continuous for
any $\var \in C_c(\R^{2d})$.

%
%

{

\begin{theorem}\label{thm:vp-bounded1}
Let $T>0$ and $f_t \in L^\infty((0,T); L^1_+(\R^{2d}))$ be a weakly continuous function.
Assume that:\\
(i) either $f_t \in L^\infty((0,T); L^\infty(\R^{2d}))$ and $f_t$ is a distributional solution of the Vlasov-Poisson equation  \eqref{eqn:VP};\\
(ii) or $f_t$ is a renormalized solution of the Vlasov-Poisson equation  \eqref{eqn:VP} (according to Definition~\ref{defn:renorm-sol}).\\
Then $f_t$ is a Lagrangian solution transported by the Maximal Regular Flow associated to $\bb_t(x,v) = (v, E_t(x))$.
In particular $f_t$ is renormalized.
\end{theorem}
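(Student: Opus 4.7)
The plan is to combine the two general tools announced in the introduction: the superposition principle (Theorem \ref{thm:repr-foliat}), which represents any nonnegative distributional solution of a continuity equation as a measure on integral curves, and the uniqueness result (Theorem \ref{thm:bc}) for the continuity equation driven by vector fields of the form $(v,\mu_t\ast x/|x|^d)$. Since $f_t\in L^\infty((0,T);L^1(\R^{2d}))$, the spatial density $\rho_t$ lies in $L^\infty((0,T);L^1(\R^d))$, so $E_t=\sigma c_d\rho_t\ast x/|x|^d$ is in $L^1_{\rm loc}$, and $\bb_t(x,v)=(v,E_t(x))$ fits exactly the class to which Theorem \ref{thm:bc} applies (with $\mu_t=\sigma c_d\rho_t$). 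This will give uniqueness of bounded compactly supported solutions to the continuity equation associated with $\bb_t$, which in turn is the key to a Lagrangian description via the Maximal Regular Flow.

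For case (i), where $f_t\in L^\infty$, I would apply the superposition principle directly to $f_t$, obtaining a representation as a superposition of absolutely continuous integral curves of $\bb_t$. Then, using Theorem \ref{thm:bc} together with the equivalence between Lagrangian and renormalized solutions from Section \ref{sect:flow renormalized}, I would identify the representing measure with the one induced by the Maximal Regular Flow, since the uniqueness forces the representing curves to agree with the flow's trajectories up to their (possibly finite) blow-up time. The renormalization property in case (i) then follows for free from the Lagrangian structure, since any $\beta(f_t)$ is transported by the same flow. For case (ii), $f_t$ is renormalized but not globally bounded; I would apply the superposition principle to the truncations $\beta_k(f_t):=\min(f_t,k)$, each of which is a bounded distributional solution of the continuity equation with vector field $\bb_t$, and pass to the monotone limit $k\to\infty$. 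The uniqueness granted by Theorem \ref{thm:bc}, applied level by level in $k$, ensures compatibility of the finite-$k$ representations, yielding a single representing measure on integral curves that coincides with the one given by the Maximal Regular Flow.

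The principal obstacle I anticipate is the passage to the limit in case (ii): since $\bb_t$ has only locally integrable components and trajectories may blow up in finite time, the flow is defined only up to a trajectory-dependent maximal time, and mass can both escape to and arrive from infinity. Bookkeeping this correctly is exactly what the notion of Maximal Regular Flow is designed for, but one must carefully apply Theorem \ref{thm:bc} on compact sets of the phase space, and verify that the localized uniqueness assembles into a global Lagrangian description. A related subtlety is that Theorem \ref{thm:bc} requires the driving measure $\mu_t$ to have some minimal regularity (essentially an $L^1$ bound in time of its total mass or a suitable integrability), which is provided here by $\mu_t=\sigma c_d\rho_t$ with $\rho_t\in L^\infty_t L^1_x$; checking this compatibility is the technical crux, while the remaining steps are a clean assembly of (i)--(iv) from the introduction.
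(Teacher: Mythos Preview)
Your proposal is correct and follows essentially the same route as the paper: verify that $\bb_t=(v,E_t(x))$ satisfies {\bf (A1)} (trivial) and {\bf (A2)} (via Theorem~\ref{thm:bc}), then invoke the extended superposition principle (Theorem~\ref{thm:repr-foliat}) to obtain a regular generalized flow $\eeta$ which, by {\bf (A2)}, is transported by the Maximal Regular Flow; the renormalization conclusion then comes from Theorem~\ref{thm:distr-on-integral-curves}.

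The only noteworthy difference is in case (ii). You propose applying the superposition to the truncations $\beta_k(f_t)=\min(f_t,k)$ and passing to the limit $k\to\infty$, and you flag this limit as the main technical obstacle. The paper sidesteps this limit entirely by choosing a \emph{single} bounded, strictly increasing $\beta$, namely $\beta(s)=\arctan s$: since $\arctan(f_t)$ is a bounded distributional solution, Theorem~\ref{thm:repr-foliat} applies to it directly, and because $\arctan$ is a bijection from $[0,\infty)$ onto $[0,\pi/2)$, the Lagrangian structure of $\arctan(f_t)$ transfers immediately to $f_t$ itself (compare the argument $f_t=\tan(\tfrac{\pi}{2}g_t)$ in the proof of Corollary~\ref{cor:vp-bounded1}). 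Your truncation approach also works---indeed a closely related decomposition via staircase functions $\beta_k$ with $\sum_k\beta_k(s)=s$ appears inside the proof of Theorem~\ref{thm:repr-foliat}---but the invertible-$\beta$ trick is cleaner and removes precisely the bookkeeping you were worried about.
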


The next corollary provides conditions in dimension $d=2,3,4$ in order to avoid the finite-time blow up of the flow that transports $f_t$. 
\begin{corollary}\label{cor:vp-bounded1}
Let $d=2,3,4$, fix $T>0$, and let $f_t \in L^\infty((0,T); L^1_+(\R^{2d}))$ be a renormalized solution of the Vlasov-Poisson equation \eqref{eqn:VP} (according to Definition~\ref{defn:renorm-sol}). Assume that both the kinetic energy and the potential energy are integrable in time, that is
\begin{equation}
\label{eqn:finite-en-new}
\int_0^T\int_{\R^{2d}} |v|^2 \, f_t(x,v)\,dx\,dv \, dt + \int_0^T\int_{\R^{d}} |E_t(x)|^2 \,dx \, dt <\infty,
\end{equation}
Then the flow associated to $\bb_t= (v, E_t)$ is globally defined on $[0,T]$
for $f_0$-a.e. $(x,v)$,
$f_t$ is the image of $f_0$ through this flow, and the map
$$
[0,T]\ni t \mapsto \int_{\R^{2d}}\psi\bigl(f_t(x,v)\bigr)\,dx\,dv
$$
is constant in time for all $\psi:[0,\infty)\to[0,\infty)$ Borel.
\end{corollary}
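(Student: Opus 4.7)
The strategy has three ingredients: (i) Theorem~\ref{thm:vp-bounded1}(ii) provides the Lagrangian transport structure; (ii) the energy hypothesis together with $d\le 4$ rules out finite-time blow-up of the flow; (iii) these two yield the full pushforward and the constancy of $\int\psi(f_t)$.

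By Theorem~\ref{thm:vp-bounded1}(ii), $f_t$ is transported by the Maximal Regular Flow $(X_t,V_t)$ of $\bb_t=(v,E_t)$. Denoting by $\tau(x,v)\in(0,T]$ the maximal existence time, the Lagrangian property reads $(X_t,V_t)_{\#}\bigl(f_0\mathbf{1}_{\{\tau>t\}}\bigr)\le f_t$ as measures on $\R^{2d}$. Once we show $\tau=T$ for $f_0$-a.e.\ $(x,v)$, mass conservation $\int f_t=\int f_0$ (which holds for renormalized solutions with integrable mass fluxes) upgrades this inequality to the equality $f_t=(X_t,V_t)_{\#}f_0$. Since $\bb_t$ is divergence-free, the MRF is measure-preserving on a set of full $f_0$-measure, so $f_0$ and $f_t$ are equimeasurable and $\int\psi(f_t)=\int\psi(f_0)$ for every Borel $\psi\ge 0$. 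The whole corollary thus reduces to non-blow-up.

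Non-blow-up is equivalent to $\int_0^{\tau(x,v)}|E_s(X_s)|\,ds<\infty$ for $f_0$-a.e.\ $(x,v)$: along a trajectory, $|V_t|\le|v|+\int_0^t|E_s(X_s)|\,ds$ and $|X_t|\le|x|+\int_0^t|V_s|\,ds$, so control of this integral prevents $|X_t|+|V_t|\to\infty$. By Fubini and the Lagrangian inequality applied to the $v$-independent weight $|E_s(x)|$,
\[
\int_{\R^{2d}}\int_0^{\tau(x,v)}|E_s(X_s(x,v))|\,ds\,f_0(x,v)\,dx\,dv \le \int_0^T\int_{\R^d}|E_s(x)|\,\rho_s(x)\,dx\,ds,
\]
and it suffices, by Chebyshev's inequality, to prove the right-hand side is finite.

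The estimate $\int_0^T\int|E_s|\rho_s\,dx\,ds<\infty$ is the heart of the proof and depends critically on $d\le 4$. The idea is to combine the elliptic coupling $-\Delta V_s=\sigma\rho_s$ with the Calder\'on-Zygmund bound $\|\nabla^2 V_s\|_{L^2}\le C\|\rho_s\|_{L^2}$ and the Sobolev embedding $\dot H^1(\R^d)\hookrightarrow L^{2d/(d-2)}(\R^d)$; the latter supplies $L^p$-integrability of $E_s$ for some $p\ge 4$ precisely when $d\le 4$. Together with moment estimates for $\rho_s$ coming from the finite kinetic-energy hypothesis and H\"older's inequality, one can close the bound in terms of $\int_0^T\|E_s\|_{L^2}^2\,ds$ and $\int_0^T\int|v|^2 f_s\,dx\,dv\,ds$, both finite by assumption. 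The main obstacle is precisely this estimate: neither $E\in L^2_{t,x}$ nor $\rho\in L^\infty_t L^1_x$ alone suffice, and one must genuinely bootstrap integrability through the Poisson equation---a bootstrap that closes only in low dimensions.
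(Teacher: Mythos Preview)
Your reduction to the estimate $\int_0^T\!\int_{\R^d}|E_s|\,\rho_s\,dx\,ds<\infty$ is the point where the argument breaks. Under the stated hypotheses you only know $f_t\in L^1$, so $\rho_t$ lies merely in $L^1$; the Calder\'on--Zygmund bound $\|\nabla^2 V_s\|_{L^2}\le C\|\rho_s\|_{L^2}$ is unavailable, and hence the Sobolev step $\|E_s\|_{L^{2d/(d-2)}}\le C\|\nabla E_s\|_{L^2}$ cannot be fed. Nothing in the data ($\rho_t\in L^1$, $E_t\in L^2$, finite kinetic energy) forces $\rho_t$ into any $L^p$ with $p>1$: take $f_t(x,v)=g(x)\,\delta_\epsilon(v)$ with $g\in L^1$ arbitrary and $\delta_\epsilon$ a narrow bump near $v=0$. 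So $\int|E_s|\rho_s$ can genuinely be infinite, and the unweighted criterion you aim at is too strong. A secondary issue: you invoke mass conservation $\int f_t=\int f_0$ as an input, but for merely renormalized solutions this is a \emph{consequence} of the global flow representation, not something known a priori.

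The paper avoids this obstacle by two devices. First, it replaces $f_t$ by the bounded renormalization $g_t:=\tfrac{2}{\pi}\arctan f_t\in[0,1]$, which is still a solution of the continuity equation; the crucial pointwise chain $g_t^2\le g_t\le f_t$ is what makes the estimate close. Second, it uses a \emph{weighted} no-blow-up criterion (Proposition~\ref{prop:no-blow-up}), so that the quantity to control is
\[
\int_0^T\!\!\int_{\R^{2d}}\frac{|E_t(x)|\,g_t(x,v)}{(1+|v|)\log(2+|v|)}\,dx\,dv\,dt,
\]
and then Young's inequality splits the integrand into $\dfrac{|E_t|^2}{(1+|v|)^4\log^2(2+|v|)}$ and $(1+|v|)^2 g_t^2$. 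The first term factors as $\|E_t\|_{L^2_x}^2\cdot\int_{\R^d}(1+|v|)^{-4}\log^{-2}(2+|v|)\,dv$, and the $v$-integral is finite precisely when $d\le 4$; the second is bounded by $\int(1+|v|)^2 f_t$ via $g_t^2\le f_t$. Thus the dimensional restriction enters through the integrability of a fixed weight in $v$, not through a Sobolev exponent, and no integrability of $\rho_t$ beyond $L^1$ is ever needed.
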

\begin{remark}\label{rmk:finit-ener} {\rm
As can be formally seen performing an integration by parts, the quantity
$$
\int_{\R^{2d}} |v|^2 \, f_t(x,v)\,dx\,dv +\sigma \,\int_{\R^{d}} |E_t(x)|^2 \,dx
$$
coincides with the total energy of the system (i.e., the sum of the kinetic and potential energy), namely
$$
\int_{\R^{2d}} |v|^2 \, f_t\,dx\,dv+\sigma \int_{\R^d}H \ast \rho_t \, \rho_t \, dx,\qquad H(x) := \frac{c_d}{d-2} |x|^{2-d},
$$
see \eqref{eqn:integriamo-per-parti} and Lemma \ref{lemma:force-bound}.
This quantity is formally conserved in time along solutions of the Vlasov-Poisson system; whether this property holds also for distributional/renormalized solutions is an important open problem in the theory. However, since weak solutions are usually built by approximation, a lower semicontinuity argument shows that the energy at time $t$ is controlled from above by the initial energy. 
Hence, when  $\sigma=1$ the validity of \eqref{eqn:finite-en-new} is often guaranteed by the assumption on the initial datum
$$
\int_{\R^{2d}} |v|^2 \, f_0\,dx\,dv+\int_{\R^d}H \ast \rho_0 \, \rho_0 \, dx<\infty,
$$
see Corollary \ref{cor:dato-L1} and Remark \ref{rmk:2d} below. 

Notice that, in the case $\sigma = -1$, a bound on the total energy does not provide in general a control on both the kinetic energy and the potential energy.
Still, one can prove the validity of \eqref{eqn:finite-en-new} under the additional integrability assumptions on $f_0$ (see Remark~\ref{rmk:3d}).
}
\end{remark}

}

Our second result deals with existence of global Lagrangian solutions under minimal assumptions on the initial data.
In this case the sign of $\sigma$ (i.e., whether the potential is attractive or repulsive) plays a crucial role, since in the repulsive case
the total energy controls the kinetic part, while in the attractive case the loss of an a priori bound of the kinetic energy prevents us  
for showing such a result. However we can state a general existence theorem that holds both in the attractive and repulsive case, and 
then show that in the repulsive case it gives us what we want.

The basic idea is the following: when proving existence of solutions by approximation it may happen that, in the approximating sequence, there are some particles that move at higher and higher speed while still remaining localized in a compact set in space (think of a family of particle rotating faster and faster along circles around the origin).
Then, while in the limit these particles will disappear from the phase-space (having infinite velocity), the electric field generated by them
will survive, since they are still in the physical space.
Hence the electric field is not anymore generated by the marginal of $f_t$ in the $v$-variable, instead it is generated by an
``effective density'' $\rho_t^{\eff}(x)$ that may be larger than $\rho_t(x)$. 

So, our strategy will be first to prove global existence of Lagrangian (hence renormalized) solutions for a generalized Vlasov-Poisson system where the electric field 
is generated by $\rho_t^{\eff}$. Then, in the particular case $\sigma=1$, we show that if the initial datum has finite total energy then $\rho_t^{\eff}=\rho_t$ and our solution solves the classical Vlasov-Poisson system.

\smallskip

We begin by introducing the concept of generalized solutions to Vlasov-Poisson.
We use the notation $\Measures_+$ to denote the space of nonnegative measures with finite total mass.

\begin{definition}[Generalized solution of the Vlasov-Poisson equation]\label{defn:gensol}

Given $\overline f \in L^1(\R^{2d})$, let $f_t \in L^\infty((0,\infty); L^1_+(\R^{2d}))$ and $\rho^{\eff}_t \in L^\infty((0,\infty); \Measures_+(\R^d))$.
We say that the couple $(f_t, \rho^{\eff}_t)$
is a (global in time) generalized solution of the Vlasov-Poisson system starting from $\overline f$ if, setting
\begin{equation}
\label{defn:rho}
\rho_t(x) := \int_{\R^d} f_t(x,v) \, dv,\quad E^{\eff}_t := \sigma \,c_d  \int_{\R^d} \rho^\eff_t(y) \frac{x-y}{|x-y|^d}\, dy, \quad \bb_t(x,v) := (v, E^{\eff}_t(x)),
\end{equation}
$f_t$ is a renormalized solution of the continuity equation with vector field $\bb_t$ starting from $\overline f$, 
\begin{equation}
\label{eqn:ineq-rho-rho-eff}
\rho_t \leq \rho^{\eff}_t \qquad \mbox{ as measures for a.e. $t\in (0,\infty)$,}
\end{equation}
and
\begin{equation}
\label{eqn:ineq-rho-rho-eff2}
 |\rho^{\eff}_t|(\R^{d})\leq \|f_0\|_{L^1(\R^{2d})} \qquad \mbox{for a.e. }t\in (0,\infty).
\end{equation}
\end{definition}
Notice that, since $\|\rho_t\|_{L^1(\R^{d})}=\|f_t\|_{L^1(\R^{2d})}$, it follows by \eqref{eqn:ineq-rho-rho-eff} and \eqref{eqn:ineq-rho-rho-eff2} that
whenever the mass of $f_t$ is conserved in time, that is $\|f_t\|_{L^1(\R^{2d})}=\|f_0\|_{L^1(\R^{2d})}$ for a.e. $t \in (0,\infty)$,
then $\rho^{\eff}_t=\rho_t$ and generalized solutions of the Vlasov-Poisson system are just standard renormalized solutions. 

We prove here that generalized solutions of the Vlasov-Poisson equation exist globally for any $L^1$ initial datum, both in the attractive and 
in the repulsive case.

\begin{theorem}\label{thm:dato-mis}
Let us consider $f_0 \in L_+^1(\R^{2d} )$. Then there exists a generalized solution  $(f_t, \rho^{\eff}_t)$ of the Vlasov-Poisson system starting from $f_0$. 
In addition, the map $$[0,\infty) \ni t \mapsto  f_t \in  L^1_{\rm loc}(\R^{2d})$$ is continuous, and the solution $f_t$ is transported by the Maximal Regular Flow associated to $\bb_t(x,v) = (v,E_t^{\eff}(x))$.
\end{theorem}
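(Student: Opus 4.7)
The plan is a standard compactness/approximation argument, where the novelty is how to control the ``loss of mass at infinity in velocity'' by introducing the effective density $\rho_t^{\eff}$. First I would approximate $f_0$ by a sequence $f_0^n \in C^\infty_c(\R^{2d})$ with $f_0^n \to f_0$ in $L^1(\R^{2d})$ and $\|f_0^n\|_{L^1} \le \|f_0\|_{L^1}$. For each $n$, classical existence results (Pfaffelmöser, Lions--Perthame) give a global smooth solution $f_t^n$ of the Vlasov--Poisson system with electric field $E_t^n = \sigma c_d \rho_t^n * x/|x|^d$. By construction $\|f_t^n\|_{L^1(\R^{2d})} = \|f_0^n\|_{L^1(\R^{2d})}$ and $\rho_t^n(x) = \int f_t^n(x,v)\,dv$ satisfies $\|\rho_t^n\|_{L^1(\R^d)}\le \|f_0\|_{L^1(\R^{2d})}$ uniformly in $n,t$.

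Next I would pass to the limit. Up to subsequences, $f_t^n$ converges weakly$^*$ in $L^\infty((0,\infty); \Measures_+(\R^{2d}))$ to some measure which, by a standard lower semicontinuity argument combined with the $L^1$ bound, is absolutely continuous in $(x,v)$ and gives the candidate $f_t \in L^\infty((0,\infty); L^1_+(\R^{2d}))$. Simultaneously, $\rho_t^n$ converges weakly$^*$ in $L^\infty((0,\infty); \Measures_+(\R^d))$ to a measure $\rho_t^{\eff}$, and the key point is that the marginal $\rho_t(x) = \int f_t(x,v)\,dv$ satisfies $\rho_t \le \rho_t^{\eff}$ as measures, with strict inequality possible precisely when some mass of $f_t^n$ escapes to $|v|=\infty$ while remaining localized in $x$. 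Moreover $|\rho_t^{\eff}|(\R^d)\le \|f_0\|_{L^1(\R^{2d})}$ from the uniform bound on $\rho_t^n$. The convolution with the locally integrable kernel $x/|x|^d$ is weakly$^*$ continuous from measures of bounded mass into $L^1_{\rm loc}$, so $E_t^n \to E_t^{\eff} := \sigma c_d \rho_t^{\eff} * x/|x|^d$ in $L^1_{\rm loc}$ (and in fact strongly in appropriate spaces after using standard potential estimates).

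Now I would verify the renormalized equation in the limit. For each $\beta \in C^1 \cap L^\infty(\R)$ with $\beta(0)=0$, the smooth solutions $f_t^n$ satisfy the renormalized identity with vector field $\bb_t^n = (v, E_t^n)$. The linear terms pass to the limit by the weak$^*$ convergence of $\beta(f_t^n)$ (along a further subsequence, $\beta(f_t^n) \rightharpoonup g_t$ in $L^\infty$-weak$^*$), but to conclude $g_t = \beta(f_t)$ one needs the equivalence between weak/Lagrangian/renormalized solutions provided by the tools cited in the introduction. Concretely, I would use Theorem~\ref{thm:bc} applied to the vector field of the form $(v, \mu_t * x/|x|^d)$ to ensure uniqueness of bounded compactly supported solutions of the continuity equation driven by $\bb_t = (v, E_t^{\eff})$, combined with the superposition principle (Theorem~\ref{thm:repr-foliat}) and the local DiPerna--Lions theory of \cite{amcofi} to obtain that the limit $f_t$ is indeed renormalized with the vector field $\bb_t = (v, E_t^{\eff})$. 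Once this is established, inequalities \eqref{eqn:ineq-rho-rho-eff} and \eqref{eqn:ineq-rho-rho-eff2} have been checked, so $(f_t, \rho_t^{\eff})$ is a generalized solution.

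Finally, the Lagrangian representation and the continuity $t \mapsto f_t \in L^1_{\rm loc}(\R^{2d})$ follow by applying Theorem~\ref{thm:vp-bounded1}(ii) to the renormalized solution $f_t$ with vector field $\bb_t = (v, E_t^{\eff}(x))$; note that the proof of that theorem only uses the convolution structure of the kernel and not the identity $\rho^{\eff}_t = \rho_t$, which is why it applies in our generalized setting. The main obstacle will be the passage to the limit in the nonlinear product $E_t^n f_t^n$: weak convergence of both factors is not enough, and one needs either strong $L^p_{\rm loc}$ compactness of $E_t^n$ (from the convolution structure and the uniform $L^1$ mass bound together with time-equicontinuity coming from the Vlasov--Poisson equation itself) or to bypass this difficulty entirely by working at the renormalized level and using the uniqueness result for convolution vector fields from Section~\ref{sect:singular integrals}. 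This is precisely the reason one introduces $\rho_t^{\eff}$ rather than insisting on $\rho_t^{\eff} = \rho_t$, which may genuinely fail in the limit.
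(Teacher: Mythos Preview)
Your outline follows the same overall strategy as the paper (approximate, extract weak limits, identify $\rho^{\eff}$, pass to the limit in the equation), but it omits the one device that makes the argument go through and, in one place, relies on results that are not available in the stated generality.

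First, a minor but real point: you invoke Pfaffelm\"oser and Lions--Perthame to produce smooth approximate solutions from $f_0^n\in C^\infty_c$. Those results are specific to $d\le 3$, while Theorem~\ref{thm:dato-mis} is stated in all dimensions. The paper avoids this by regularizing the \emph{kernel} as well: with $K_n=K*\psi_n$ the approximate vector field $\bb_t^n=(v,K_n*\rho^n_t)$ is globally Lipschitz, so the approximate system has a classical solution by elementary fixed-point arguments in any dimension.

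Second, and more importantly, the passage to the limit as you describe it is circular. You take $\beta(f^n_t)\rightharpoonup^* g_t$ and propose to identify $g_t=\beta(f_t)$ using the uniqueness result of Theorem~\ref{thm:bc}. But uniqueness only tells you that $g_t$ is the unique bounded compactly supported solution with initial datum $\beta(f_0)$; to conclude $g_t=\beta(f_t)$ you would also need to know that $\beta(f_t)$ is a solution, which is exactly the renormalization statement you are trying to prove. Weak limits do not commute with nonlinear $\beta$, so there is no shortcut here. Relatedly, your claim that the weak-$*$ limit of $f^n_t$ in measures is automatically an $L^1$ function ``by a standard lower semicontinuity argument combined with the $L^1$ bound'' is not correct as stated: an $L^1$ bound on a sequence does not prevent concentration of the limit.

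The paper's remedy is a \emph{level-set decomposition}: one writes $f^n_t=\sum_{k\ge 0} f^{n,k}_t$ with $f^{n,k}_t:=f^n_t\,1_{\{k\le f^n_t<k+1\}}$. Each piece is uniformly bounded by $k+1$ (crucial: this uses only the incompressibility of the approximate flow) and is itself a distributional solution of the continuity equation with the \emph{same} vector field $\bb^n$. One then extracts weak-$*$ limits $f^{n,k}\rightharpoonup^* f^k$ in $L^\infty$ and shows $f:=\sum_k f^k\in L^\infty((0,\infty);L^1)$ with $f^n\rightharpoonup f$ weakly in $L^1$. To pass the equation to the limit for each bounded piece, the paper proves (via the fractional Sobolev regularity of $K$) a uniform-in-$n$ translation equicontinuity of $E^n_t$ in $L^1_{\rm loc}$, which together with the weak $L^1_{\rm loc}$ convergence $\bb^n\rightharpoonup\bb$ is exactly the input for the DiPerna--Lions stability theorem; this yields that each $f^k$ (hence each finite sum $F^m=\sum_{k\le m} f^k$) is a bounded distributional solution for $\bb$. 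Only now, with a bounded solution in hand, does one invoke Theorem~\ref{thm:repr-foliat} to get that $F^m$ is renormalized, and then let $m\to\infty$ using the strong $L^1_{\rm loc}$ convergence $F^m\to f$. The identification problem $g_t=\beta(f_t)$ never arises, because one never takes a weak limit of a nonlinear function of $f^n$.

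In short: smoothing the kernel fixes the dimension issue, and the level-set decomposition is the missing idea that replaces your circular uniqueness step.
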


As observed before, if $\rho^{\eff}_t =  \rho_t $ then $f_t$ is a renormalized solution of the Vlasov-Poisson system.
When $\sigma=1$ (i.e., in the repulsive case) the equality $\rho^{\eff} =  \rho_t$ is satisfied in many cases of interest,
for instance whenever the total initial energy is finite (see Corollary~\ref{cor:dato-L1} below),
or in the case of infinite energy if other weaker conditions are satisfied as it happens in the context of \cite{za} and \cite{lp} (see Remark~\ref{rmk:infiniteen}).

The following result improves the result announced in \cite{dpl}, generalizing their statement to any dimension and with weaker conditions on the initial datum.
As we shall explain in Remark \ref{rmk:2d}, the case $d=2$ is slightly different from $d \geq 3$ because of the slower decay at infinity of the kernel $x/|x|^d$.
For this reason we restrict the next two statements to the case $d \geq 3$, while in Remark \ref{rmk:2d}
we explain how to deal with the case $d=2$.

%
{
\begin{corollary}\label{cor:dato-L1}
Let $d\geq 3$, and let $f_0 \in L^1_+(\R^{2d})$ satisfy
$$\int_{\R^{2d}} |v|^2 f_0 \, dx\, dv + \int_{\R^d}H \ast \rho_0 \, \rho_0 \, dx <\infty,\qquad H(x) := \frac{c_d}{d-2} |x|^{2-d}.$$
Assume that $\sigma=1$.
Then there exists a global Lagrangian (hence renormalized) solution $f_t \in C([0,\infty); L^1_{\rm loc}(\R^{2d}))$ of the Vlasov-Poisson system \eqref{eqn:VP} with initial datum $f_0$.\\
Moreover, the following properties hold:
\begin{enumerate}
\item the density $\rho_t$ and the electric field $E_t$ are strongly continuous in $L^1_{\rm loc} (\R^d)$;
\item for every $t \geq 0$, we have the energy bound
\begin{equation}
\label{eqn:bdd-en}
\int_{\R^{2d}} |v|^2 f_t \, dx\, dv + \int_{\R^d} H \ast \rho_t \, \rho_t \, dx \leq \int_{\R^{2d}} |v|^2 f_0 \, dx\, dv + \int_{\R^d} H \ast \rho_0 \, \rho_0 \, dx;
\end{equation}
\item if $d=3,4$ then the flow is globally defined on $[0,\infty)$ (i.e., trajectories do not blow-up) and $f_t$ is the image of $f_0$ through an incompressible flow.
\end{enumerate}
\end{corollary}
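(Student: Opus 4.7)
The plan is to apply Theorem~\ref{thm:dato-mis} to produce a generalized solution $(f_t,\rho^{\eff}_t)$ starting from $f_0$, then establish the energy inequality at the level of $\rho^{\eff}_t$, and finally exploit the repulsive sign $\sigma=1$ to force $\rho^{\eff}_t = \rho_t$ via Corollary~\ref{cor:vp-bounded1}, thereby turning the generalized solution into an honest Lagrangian Vlasov-Poisson solution satisfying (i)--(iii).

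First, along the lines of the proof of Theorem~\ref{thm:dato-mis}, I would regularize $f_0$ by smooth compactly supported data $f_0^n$ with $f_0^n\to f_0$ in $L^1$ and with both the kinetic and interaction energies converging. For each $n$ the classical Vlasov-Poisson solution $f^n_t$ exists globally and conserves the total energy, and passing to the limit one recovers the generalized solution $(f_t,\rho^{\eff}_t)$. Weak-$*$ lower semicontinuity of $f\mapsto\int|v|^2 f\,dx\,dv$ and of the quadratic functional $\mu\mapsto\int H\ast\mu\,\mu\,dx$ on nonnegative measures (the latter using $H\geq 0$ in $d\geq 3$ and $\sigma=1$) then yields
$$\int_{\R^{2d}}|v|^2 f_t\,dx\,dv + \int_{\R^d} H\ast\rho^{\eff}_t\,\rho^{\eff}_t\,dx \;\leq\; \int_{\R^{2d}}|v|^2 f_0\,dx\,dv + \int_{\R^d} H\ast\rho_0\,\rho_0\,dx$$
for every $t\geq 0$ (first for a.e.\ $t$, then for every $t$ via weak continuity of $f_t$).

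Second, this bound translates into $\int_0^T\int_{\R^d}|E^{\eff}_t|^2\,dx\,dt<\infty$ and $\int_0^T\int_{\R^{2d}}|v|^2 f_t\,dx\,dv\,dt<\infty$ for every $T>0$. Since $f_t$ is a renormalized solution of the continuity equation with vector field $\bb_t=(v,E^{\eff}_t)$, the proof of Corollary~\ref{cor:vp-bounded1} applies (it only uses the $L^2$--$L^2$ integrability of the force field, not the self-consistent coupling) and produces, in $d=3,4$, a globally defined Maximal Regular Flow that transports $f_0$ incompressibly onto $f_t$. The resulting mass conservation $\|f_t\|_{L^1(\R^{2d})}=\|f_0\|_{L^1(\R^{2d})}$ then combines with \eqref{eqn:ineq-rho-rho-eff}--\eqref{eqn:ineq-rho-rho-eff2} through the chain
$$\|f_0\|_{L^1(\R^{2d})} \;=\; \|f_t\|_{L^1(\R^{2d})} \;=\; \|\rho_t\|_{L^1(\R^{d})} \;\leq\; |\rho^{\eff}_t|(\R^d) \;\leq\; \|f_0\|_{L^1(\R^{2d})},$$
forcing $\rho^{\eff}_t=\rho_t$ for every $t$. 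This promotes $f_t$ to a renormalized Lagrangian solution of Vlasov-Poisson and yields (ii) and (iii) simultaneously. For (i), the strong $L^1_{\rm loc}$-continuity of $\rho_t$ follows from the weak continuity of $f_t$ together with the tightness in $v$ supplied by the uniform kinetic-energy bound, and that of $E_t$ from local convolution with the locally integrable kernel $x/|x|^{d}$.

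The main obstacle is the lower semicontinuity step for the interaction energy. On the approximants $\rho^n_t$ coincides with its own effective density, while in the limit $\rho^{\eff}_t$ may strictly dominate $\rho_t$ precisely because of the velocity-escape mechanism described in the introduction. One has to verify that the limiting interaction term is computed against the correct $\rho^{\eff}_t$ (the physical-space weak-$*$ limit of $\rho^n_t$, which also generates the limiting field $E^{\eff}_t$) and not against the smaller $\rho_t$; only this stronger inequality enables the bootstrap via Corollary~\ref{cor:vp-bounded1} that retroactively rules out the escape and closes the loop $\rho^{\eff}_t=\rho_t$.
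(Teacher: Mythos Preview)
Your bootstrap strategy---energy bound for $\rho^{\eff}_t$, then Corollary~\ref{cor:vp-bounded1} to get a global flow, then mass conservation to force $\rho^{\eff}_t=\rho_t$---is elegant but leaves a genuine gap: it only establishes $\rho^{\eff}_t=\rho_t$, and hence the existence of a renormalized Vlasov-Poisson solution, in dimensions $d=3,4$. The no-blow-up criterion behind Corollary~\ref{cor:vp-bounded1} relies on the integrability of $(1+|v|)^{-4}\log^{-2}(2+|v|)$ over $\R^d$, which fails for $d\geq 5$. The statement, however, asserts existence (together with (i) and (ii)) for all $d\geq 3$, and your argument gives nothing there. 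A secondary issue is that you invoke the inequality $\int H\ast\mu\,d\mu\geq\int|\nabla H\ast\mu|^2\,dx$ for $\mu=\rho^{\eff}_t$ a general nonnegative measure, whereas Lemma~\ref{lemma:force-bound} is stated only for $L^1$ densities; the extension to measures is plausible but requires justification.

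The paper takes a more direct route that avoids both problems. From energy conservation for the smooth approximants and $H\geq 0$ one gets the uniform kinetic bound $\sup_{n,t}\int|v|^2 f^n_t\,dx\,dv<\infty$. This bound alone already gives tightness of $f^n_t$ in the $v$ variable, so for any test function $\varphi\in C_c((0,\infty)\times\R^d)$ one can pass from $\int\varphi\,\rho^n_t\,dx\,dt=\int\varphi\,f^n_t\,dx\,dv\,dt$ directly to $\int\varphi\,\rho_t\,dx\,dt$ (cutting off at $|v|\leq R$, using weak $L^1$ convergence of $f^n$ to $f$ inside, and the kinetic bound outside). Hence $\rho^n\rightharpoonup\rho$ and therefore $\rho^{\eff}=\rho$ for every $d\geq 3$, before any flow argument. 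The energy inequality (ii), the continuity (i), and the global flow in $d=3,4$ (iii) are then proved afterwards, with (iii) now invoking Lemma~\ref{lemma:force-bound} only for the $L^1$ density $\rho_t$. Your bootstrap is a legitimate alternative in low dimensions, but the paper's tightness argument is both simpler and dimension-independent.
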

}

\begin{remark}\label{rmk:3d}{\rm
When $d=3$ (resp. $d=4$), the above result can be generalized to the attractive case $\sigma=-1$ under the additional assumption $f_0 \in L^{9/7}(\R^6)$ (resp. $f_0 \in L^2(\R^8))$.
Indeed this allows one to prove the the kinetic energy is uniformly bounded in time,
and then by standard interpolation inequalities one obtains that also the potential energy is bounded (see for instance \cite{dpl} or \cite[Remark 8.5]{maria}).
}
\end{remark}

\begin{remark}\label{rmk:2d}{\rm 
In dimension $d=2$, even with an initial datum $f_0\in C^\infty_c(\R^d)$, the electric field $E_0$ cannot belong to $L^2$
(this is due to the fact that the kernel $x/|x|^{d}$ does not belong to $L^2$ at infinity) and therefore the initial potential energy cannot be finite.  
For this reason one needs to slightly modify the equation adding a fixed background density $\rho_b$
satisfying
$$\int_{\R^d} \rho_b(x) \, dx = \int_{\R^d} \rho_0(x) \, dx,$$
giving rise to the following system:
\begin{equation*}
\begin{cases}
\partial_t f_t + v\cdot \nabla_x f_t + E_t \cdot \nabla_v f_t = 0 & \qquad \mbox{in }(0,\infty) \times \R^d \times \R^d
\\
\displaystyle{\rho_t(x) = \int_{\R^d} f_t(x,v) \, dv} & \qquad \mbox{in }(0,\infty) \times \R^d
\\
\displaystyle{E_t(x) = \sigma \,c_d  \int_{\R^d} \bigl(\rho_t(y)-\rho_b(y)\bigr)\, \frac{x-y}{|x-y|^d}}\, dy & \qquad \mbox{in }(0,\infty) \times \R^d,
\end{cases}
\end{equation*}
The presence of $\rho_b$ allows for cancellations in the expression for the $L^2$ norm of $E_0$, which turns out to be finite if $\rho_b$ and $\rho_0$ are sufficiently nice.
In this setting, when $\sigma=1$ one can show that an analogous statement to Corollary~\ref{cor:dato-L1} holds also for $d=2$.
On the other hand, when $\sigma=-1$ one needs to assume that $f_0 \in L\log L(\R^4)$ (compare with Remark \ref{rmk:3d} above).}
\end{remark}

\begin{remark}{\rm
In this paper we restricted ourselves to the Vlasov-Poisson equation but the argument and techniques introduced here generalize
to other equations. For instance, a minor modification of our proofs allows one to obtain the same results in the context of the relativistic Vlasov-Poisson system.}
\end{remark}

The proofs of Theorems \ref{thm:vp-bounded1} and \ref{thm:dato-mis} and Corollary \ref{cor:dato-L1} are given in the next section.

\section{Vlasov-Poisson: Lagrangian solutions and global existence}

{
\subsection{The flow associated to Vlasov-Poisson: proof of Theorem \ref{thm:vp-bounded1} and Corollary~\ref{cor:vp-bounded1}}
\label{sect:flowVP}

\begin{proof}[Proof of Theorem~\ref{thm:vp-bounded1}.]
Notice that the vector field $\bb$ satisfies assumption {\bf (A1)} of Section~\ref{sec:maxflow} and is divergence-free.
Also, by Theorem~\ref{thm:bc} it satisfies assumption {\bf (A2)}. Therefore by Theorem~\ref{thm:repr-foliat} we deduce that $f_t$ (resp. $\beta(f_t)$ with $\beta(s)=\arctan(s)$ if $f_t$ is not bounded but is renormalized) is a Lagrangian solution, and
Theorem \ref{thm:distr-on-integral-curves} ensures in particular that $f_t$ is a renormalized solution.
\end{proof}

\begin{proof}[Proof of Corollary~\ref{cor:vp-bounded1}.]
Thanks to Theorem~\ref{thm:vp-bounded1} we know that the solution is transported by the maximal regular flow associated to $\bb_t=(v,E_t)$.
Also, since $f_t$ is renormalized, also $g_t:= \frac{2}{\pi}\arctan f_{t}: (0,T) \times \R^d \to [0,1]$ is a solution of  the continuity equation with vector field $\bb$.
Hence, in order to prove that trajectories do not blow up, it is enough to apply the criterion stated in Proposition~\ref{prop:no-blow-up} with $\mu_t=g_t\,dx$, that is
\begin{equation}
\label{eqn:no-blow-up-ass}
\int_0^T\!\!\int_{\R^{2d}}\frac{| \bb_t (x,v)|\,g_t (x,v)}{\bigl(1+(|x|^2+|v|^2)^{1/2}\bigr) \log\bigl(2+(|x|^2+|v|^2)^{1/2}\bigr)} \,dx\,dv \, dt< \infty.
\end{equation}
To this end, we observe that $g_t^2 \leq g_t \leq f_t$, hence
\begin{equation*}
\begin{split}
&\int_0^T\!\!\int_{\R^{2d}}\frac{| \bb_t |\,g_t}{\bigl(1+(|x|^2+|v|^2)^{1/2}\bigr) \log\bigl(2+(|x|^2+|v|^2)^{1/2}\bigr)} \,dx\,dv \, dt 
\\&\leq \int_0^T\!\!\int_{\R^{2d}} \, f_t \, dx\, dv\, dt + \int_0^T\!\!\int_{\R^{2d}} |E_t| \, \frac{g_t}{(1+|v|) \log(2+|v|)} \, dx \,dv\, dt 
\\& \leq
 \int_0^T\!\!\int_{\R^{2d}} \, f_t \, dx\, dv\, dt + \int_0^T\!\!\int_{\R^{2d}} \bigg( \frac{|E_t|^2}{(1+|v|)^4\log^2(2+|v|)}+ (1+|v|)^2g_t^2 \bigg) \, dx \,dv\, dt 
\\& \leq
 \Big( \int_{\R^{d}} \frac1 {(1+|v|)^4\log^2(2+|v|)} \, dv\Big)
 \Big(\int_0^T\!\!\int_{\R^{d}} |E_t|^2\, dx \, dt\Big)
 +2 \int_0^T\!\!\int_{\R^{2d}}  (1+|v|)^2f_t \, dx \,dv\, dt .
 \end{split}
\end{equation*}
Also, since $d\leq 4$, 
$$ \int_{\R^{d}} \frac1 {(1+|v|)^4\log^2(2+|v|)} \, dv < \infty,$$
thus \eqref{eqn:no-blow-up-ass} follows from \eqref{eqn:finite-en-new}.

Now, by the no blow-up criterion in Proposition~\ref{prop:no-blow-up} we obtain that the Maximal Regular Flow
$\XX$ of $\bb$ is globally defined on $[0,T]$, namely its trajectories
$\XX(\cdot,x,v)$ belong to $AC([0,T];\R^{2d})$ for $g_0$-a.e. $(x,v)\in\R^{2d}$, and $g_t = \XX(t,\cdot)_\#g_0=g_0\circ \XX(t,\cdot)^{-1}$.
Since $f_t=\tan\left(\frac{\pi}2 g_t\right)$ and the map $[0,1)\ni s \to \tan\left(\frac{\pi}2 s\right) \in [0,\infty)$ is a diffeomorphism,
we obtain that $f_t = \XX(t,\cdot)_\#f_0=f_0\circ \XX(t,\cdot)^{-1}$ as well.
In particular, for all Borel functions $\psi:[0,\infty)\to [0,\infty)$ we have
$$
\int_{\R^{2d}}\psi(f_t)\,dx\,dv=\int_{\R^{2d}}\psi(f_0)\circ \XX(t,\cdot)^{-1}\,dx\,dv=\int_{\R^{2d}}\psi(f_0)\,dx\,dv,
$$
where the second equality follows by the incompressibility of the flow.
\end{proof}

%
}

\subsection{Global existence results: proof of Theorem~\ref{thm:dato-mis} and Corollary \ref{cor:dato-L1}}\label{sec:VP-ex}


\begin{proof}[Proof of Theorem \ref{thm:dato-mis}]
To prove global existence of generalized Lagrangian solutions of Vlasov-Poisson we shall use an approximation procedure.
Since the argument is rather long and involved, we divide the proof in five steps that we now describe briefly:
In Step 1 we start from approximate solutions $f^n$, obtained by smoothing the initial datum and the kernel, and we decompose them along their level sets. Exploiting the incompressibility of the flow, these functions are still solutions of the continuity equation with the same vector field and, when $n$ varies, they are uniformly bounded. This allows us to take their limit as $n \to \infty$ in Step 2, and show that the limit belongs to $L^1$.
In Step 3 we introduce $\rho^\eff$ as the limit as $n\to \infty$ of the approximate densities $\rho^n$, and we motivate its properties.
In Step 4 we show that the vector fields $E^n$ converge to the vector field obtained by convolving $\rho^\eff$ with the Poisson kernel.
Finally, in Step 5 we combine stability results for continuity equations with the results of Section~\ref{sec:repres-flow} to take the limit in the approximate Vlasov-Poisson equation and show that the limiting solution is transported by the limiting incompressible flow.
We now enter into the details of the proof.

\smallskip

\noindent {\bf Step 1: approximating solutions.} 
Let $K(x) := \sigma\, c_d\,x/|x|^d$ and let us consider approximating kernels $K_n := K \ast \psi_n$, where $\psi_n(x) := n^d\psi(nx)$ and $\psi \in C^\infty_c(\R^d)$ is a standard convolution kernel in $\R^d$.
Let $f^{n}_0 \in C^\infty_c(\R^{2d})$ be a sequence of functions such that
\begin{equation}
\label{initialapprox}
f^{n}_0 \to f_0 \qquad \mbox{in } L^1(\R^{2d}),
\end{equation}
and denote by
$f^{n}_t$ distributional solutions of the Vlasov system with initial datum $f^{n}_0$ and kernel $K_n$ (see for instance \cite{dobby} or \cite{rein} for this classical construction based on a fixed point argument in the Wasserstein metric).
Also, define  $\rho^n_t:=\int f_t^n\,dv$ and $E_t^n:=K_n\ast \rho_t^n$.
Notice that since $K_n$ is smooth and decays at infinity, both $E_t^n$ and $\nabla E_t^n$ are bounded on $[0,\infty) \times \R^d$ (with a bound that depends on $n$).
Hence, since $\bb^n_t:=(v,E_t^n)$ is a Lipschitz divergence-free vector field
its flow  $\XX^n(t):\R^{2d} \to \R^{2d}$ is well defined and incompressible, and by standard theory for the transport equation we obtain that\begin{equation}
\label{eqn:fn-transported}
f^n_t = f^n_0\circ \XX^n(t)^{-1}\qquad \forall\,t \in [0,\infty),
\end{equation}
and
\begin{equation}\label{eqn:l1normsfn}
\|\rho^{n}_t\|_{L^1(\R^{d})}= \|f^{n}_t\|_{L^1(\R^{2d})} =  \|f^{n}_0\|_{L^1(\R^{2d})}\qquad \forall\,t \in [0,\infty).
\end{equation}
Assuming without loss of generality that $\Leb{2d}(\{ f_0 = k\} ) = 0$ for every $k\in \N$ (otherwise we consider as level sets the values $R+k$ in place of $k$ for some $R \in [0,1]$), from \eqref{initialapprox} we deduce that
\begin{equation}
\label{eqn:initial}
f^{n,k}_0:=1_{\{k \leq f_0^n < k+1\}}f^n_0 \to f^k_0 := 1_{\{k \leq f_0 < k+1\}}f_0 \quad \mbox{in } L^1(\R^{2d})\qquad \forall\,k \in \N.
\end{equation}
Now, for any $k,n\in \N$ we consider $f^{n,k}_t := 1_{\{k \leq f^n_t < k+1\}}f^n_t$.
Then it follows by \eqref{eqn:fn-transported} that
\begin{equation}
\label{eqn:repr:fnk}
f^{n,k}_t =  1_{\{k \leq f^n_0\circ \sXX^n(t)^{-1}  < k+1 \}}f^n_0\circ \XX^n(t)^{-1}\qquad \forall\,t \in [0,\infty),
\end{equation}
$f^{n,k}_t$ is a distributional solution of the continuity equation with vector field $\bb^n_t$, and 
\begin{equation}\label{eqn:l1norms}
\|f^{n,k}_t\|_{L^1(\R^{2d})} =  \|f^{n,k}_0\|_{L^1(\R^{2d})}\qquad \forall\,t\in [0,\infty).
\end{equation}

\smallskip

\noindent {\bf Step 2: limit in the phase-space.}
By construction the functions $\{ f^{n,k}\}_{n\in \N}$ are nonnegative and bounded by $k+1$, hence
there exists $f^k \in L^\infty((0,\infty) \times \R^{2d})$ nonnegative such that, up to subsequences,
\begin{equation}
\label{eqn:conv-fnk}
f^{n,k} \rightharpoonup f^k \qquad \mbox{weakly* in } L^\infty((0,\infty) \times \R^{2d}) \;  \mbox{as } n\to\infty\qquad \forall\,k\in \N.
\end{equation}
Moreover, for any $K$ compact subset of $\R^{2d}$ and any bounded function $\phi:(0,\infty)\to [0,\infty)$ with compact support, we can the test function $\phi(t) \,1_{K}(x,v)\, {\rm sign}(f^k_t)(x,v)$ in the previous weak convergence, and thanks to Fatou's Lemma, \eqref{eqn:l1norms}, and \eqref{eqn:initial}, we get
\begin{equation}
\begin{split}
\int_0^\infty \phi(t) \|f^k_t\|_{L^1(K)} \, dt 
&\leq \liminf_{n\to \infty} \int_0^\infty \phi(t) \|f^{n,k}_t\|_{L^1(K)} \, dt 
\\
&\leq \liminf_{n\to \infty} \int_0^\infty \phi(t) \|f^{n,k}_t\|_{L^1(\R^{2d})} \, dt 
\\
&= \liminf_{n\to \infty} \int_0^\infty \phi(t) \|f^{n,k}_0\|_{L^1(\R^{2d})} \, dt 
\\
&=  \Bigl(\int_0^\infty \phi(t)\,dt \Bigr)\, \|f^{k}_0\|_{L^1(\R^{2d})}.
\end{split}
\end{equation}
Since $\phi$ was arbitrary, taking the supremum among all compact subsets $K\subset \R^{2d}$ we obtain
\begin{equation}
\label{eqn:L1boundfk}
\|f^k_t\|_{L^1(\R^{2d})} \leq  \|f^{k}_0\|_{L^1(\R^{2d})}\qquad \mbox{for a.e. }t\in (0,\infty),
\end{equation}
so, in particular, $f^k \in L^\infty((0,\infty); L^1(\R^{2d}))$.


Thanks to \eqref{eqn:L1boundfk} we see that, if we define
\begin{equation}
\label{eqn:f}
f := \sum_{k=0}^ \infty f^{k} \qquad \mbox{in }(0,\infty) \times \R^{2d},
\end{equation}
then
\begin{equation}
\label{eqn:f-L1bound}
\|f_t\|_{L^1(\R^{2d})} \leq \sum_{k=0}^ \infty \|f^{k}_t\|_{L^1(\R^{2d})} \leq \sum_{k=0}^ \infty \|f^{k}_0\|_{L^1(\R^{2d})} =\|f_0\|_{L^1(\R^{2d})} \qquad \mbox{for a.e. $t\in[0,\infty)$,}
\end{equation}
which implies that  $f \in L^\infty ((0,\infty) ; L^1(\R^{2d}))$.

We now claim that
\begin{equation}
\label{eqn:conv-fn} f^{n} \rightharpoonup f \qquad \mbox{weakly in } L^1((0,T) \times \R^{2d})
\end{equation}
for every $T>0$.
Indeed, 
fix $\varphi \in L^\infty ((0,T) \times \R^{2d})$. Noticing that $f^n=\sum_{k=0}^\infty f^{n,k}$ and $f=\sum_{k=0}^\infty f^{k}$,
by the triangle inequality we have that,  for every $k_0\geq 1$, 
\begin{align*}
\Big|  \int_0^T\!\!\! \int_{\R^{2d}} \varphi\, (f^{n}- f) \, dx \, dv \, dt \Big|
&= 
\Big| \sum_{k=0} ^\infty  \int_0^T\!\!\!\int_{\R^{2d}} \varphi \,(f^{n,k}- f^k) \, dx \, dv \, dt \Big|
\\
&\leq 
\Big| \sum_{k=0} ^{k_0-1} \int_0^T\!\!\!\int_{\R^{2d}} \varphi \,(f^{n,k}- f^k) \, dx\, dv \, dt \Big|
\\&
+ \sum_{k=k_0}^\infty  \int_0^T\!\!\! \int_{\R^{2d}} |\varphi|\, |f^{n,k}| \, dx\, dv \, dt
+ \sum_{k=k_0}^\infty  \int_0^T\!\!\!\int_{\R^{2d}} |\varphi| \,|f^{k}| \, dx\, dv \, dt.
\end{align*}
Using \eqref{eqn:l1norms} and \eqref{eqn:L1boundfk}, the last two terms can be estimated by
\begin{align*}
&\hspace{-3em}\sum_{k=k_0}^\infty  \int_0^T\!\!\! \int_{\R^{2d}} |\varphi| \,|f^{n,k}| \, dx\, dv \, dt
+ \sum_{k=k_0}^\infty  \int_0^T\!\!\!\int_{\R^{2d}} |\varphi| \,|f^{k}| \, dx\, dv \, dt
\\
&\leq 
T\|\varphi\|_{\infty}\sum_{k=k_0}^\infty  \int_{\R^{2d}}|f^{n,k}_0|  \, dx\, dv
+ T\|\varphi\|_{\infty}
\sum_{k=k_0}^\infty \int_{\R^{2d}} |f^k_0| \, dx\, dv
\\
&\leq  T\|\varphi\|_{\infty} \int_{\{ f^n_0\geq k_0\}}|f^{n}_0|  \, dx\, dv
+ T\|\varphi\|_{\infty}
 \int_{\{ f_0\geq k_0\}} |f_0| \, dx\, dv\\
 &=T\|\varphi\|_\infty\Bigl(\|f^{n}_0 1_{\{ f^n_0\geq k_0\}}\|_{L^1( \R^{2d})}+\|f_0 1_{\{ f_0\geq k_0\}}\|_{L^1( \R^{2d})}\Bigr).
\end{align*}
Notice that, thanks to \eqref{eqn:initial} and \eqref{initialapprox}, it follows that
$$
f^{n}_0 1_{\{ f^n_0\geq k_0\}} \to f_0 1_{\{ f_0\geq k_0\}}\qquad \mbox{in } L^1(\R^{2d}),
$$
so by letting $n \to \infty$ and using \eqref{eqn:conv-fnk} we deduce that
\begin{align*}
\limsup_{n\to \infty} \Big|  \int_0^T\!\!\! \int_{\R^{2d}} \varphi\, (f^{n}- f) \, dx \, dt \Big|&
\leq \limsup_{n\to \infty} \Big| \sum_{k=0} ^{k_0-1} \int_0^T\!\!\!\int_{\R^{2d}} \varphi (f^{n,k}- f^k) \, dx\, dv \, dt \Big|\\
&\qquad +2T\|\varphi\|_\infty \|f_0 1_{\{ f_0\geq k_0\}}\|_{L^1( \R^{2d})}\\
&=2T\|\varphi\|_\infty \|f_0 1_{\{ f_0\geq k_0\}}\|_{L^1( \R^{2d})}.
\end{align*}
Hence, letting $k_0 \to \infty$, since $\varphi \in L^\infty$ was arbitrary  we obtain \eqref{eqn:conv-fn}.

\smallskip

\noindent {\bf Step 3: limit of the physical densities.}
Since the sequence $\{\rho^n\}_{n\in \N}$ is bounded in $L^\infty((0,\infty); \Measures_+(\R^d))\subset \bigl[L^1((0,\infty),C_0(\R^d))\bigr]^*$
(see \eqref{eqn:l1normsfn}), there exists $\rho^\eff \in L^\infty((0,\infty); \Measures_+(\R^d))$ such that
\begin{equation}
\label{eqn:conv-densities}
\rho^n \rightharpoonup \rho^{\eff}  \qquad \mbox{weakly* in }L^\infty((0,\infty); \Measures_+(\R^d)).
\end{equation}
Moreover, by  the lower semicontinuity of the norm  under weak* convergence, using \eqref{eqn:l1normsfn} again we deduce that
\begin{equation}
\label{eqn:dens-bound}
{\rm ess} \negthickspace\negthickspace\sup_{t\in (0,\infty)} |\rho^{\eff}_t| (\R^{d}) \leq \lim_{n\to \infty} \Big(\sup_{t\in (0,\infty)} \|\rho^n_t \|_{L^1(\R^d)} \Big) =  \lim_{n\to \infty} \|f^n_0\|_{L^1(\R^{2d})} = \|f_0\|_{L^1(\R^{2d})}.
\end{equation}
Now, let us consider any nonnegative function $\varphi \in C_c((0,\infty) \times \R^d)$. 
By \eqref{eqn:conv-densities} and \eqref{eqn:conv-fn} we obtain that, for any $R>0$,
\begin{equation*}
\begin{split}
\int_{0}^\infty \int_{\R^d} \varphi_t(x) \, d \rho_t^{\eff}(x) \, dt&= \lim_{n\to \infty}\int_0^\infty \int_{\R^d} \rho^{n}_t(x)\, \varphi_t(x) \, dx\, dt
\\
& = \lim_{n\to \infty}\int_0^\infty \int_{\R^{2d}} f^{n}_t(x,v)\, \varphi_t(x) \, dv \, dx\, dt
\\
& \geq \liminf_{n\to \infty}\int_0^\infty \int_{\R^{d}\times B_R} f^{n}_t(x,v)\, \varphi_t(x) \, dv \, dx\, dt
\\
&= \int_0^\infty \int_{\R^{d}\times B_R} f_t(x,v)\, \varphi_t(x) \, dv \, dx\, dt,
\end{split}
\end{equation*}
so by letting $R\to \infty$ we get
$$
\int_{0}^\infty \int_{\R^d} \varphi_t(x) \, d \rho_t^{\eff}(x) \, dt
\geq  \int_0^\infty \int_{\R^{2d}} f_t(x,v)\, \varphi_t(x) \, dv \, dx\, dt=\int_{0}^\infty \int_{\R^d} \varphi_t(x) \, d \rho_t(x) \, dt.
$$
By the arbitrariness of $\varphi$ we deduce that \begin{equation}
\label{eqn:rhoeffgrossa}
\rho_t \leq \rho^{\eff}_t \qquad \mbox{ as measures for a.e. $t\in (0,\infty)$,}
\end{equation}
as desired.

\smallskip

\noindent {\bf Step 4: limit of the vector fields.}
Set $E_t ^{\eff}:= K\ast \rho^{\eff}_t$ and $\bb_t (x,v) := (v,E_t^{\eff}(x))$. We claim that
\begin{equation}
\label{eqn:conv-b}
\bb^n \rightharpoonup \bb \qquad \mbox{weakly in } L^1_{\rm loc}((0,\infty) \times \R^{2d}; \R^{2d})
\end{equation}
and that, for every ball $B_R \subset \R^d$,
 \begin{equation}
 \label{eqn:conv-b-forte-deb-VP}
 [\rho^n_t \ast K_n](x+h)  \to [\rho^n_t \ast K_n](x) \quad \text{as $|h| \to 0$ in $L^1_{\rm loc}((0,\infty); L^1 (B_R))$, uniformly in $n$.}
 \end{equation}

To show this we first prove that the sequence $\{\bb^n\}_{n\in \N}$ is bounded in $L^p_{\rm loc}((0,\infty) \times \R^{2d}; \R^{2d})$ for every $p \in [1, d/(d-1))$. Indeed, using Young's inequality, for every $t \geq 0$, $n\in \N$, and $r>0$,
\begin{equation*}
\begin{split}
& \| \rho^n_t \ast K_n \|_{L^p(B_r)} = \| (\rho^n_t \ast \psi_n)\ast K \|_{L^p(B_r)} 
\\& \leq  \| (\rho^n_t \ast \psi_n) \ast( K1_{B_1}) \|_{L^p(B_r)} +  \| (\rho^n_t \ast \psi_n) \ast( K1_{\R^{d} \setminus B_1}) \|_{L^p(B_r)}
\\& \leq  \| (\rho^n_t \ast \psi_n) \ast( K1_{B_1}) \|_{L^p(\R^d)} + \Leb{d}(B_r)^{1/p}  \| (\rho^n_t \ast \psi_n) \ast( K1_{\R^{d} \setminus B_1}) \|_{L^\infty(\R^d)}
 \\
 & \leq 
 \| \rho^n_t\|_{L^1(\R^d)} \| \psi_n\|_{L^1(\R^d)} \| K \|_{L^p(B_1)} + \Leb{d}(B_r)^{1/p} \| \rho^n_t\|_{L^1(\R^d)} \| \psi_n\|_{L^1(\R^d)} \| K\|_{L^\infty(\R^{d} \setminus B_1)}
\end{split}
\end{equation*}
 hence,  up to subsequences, the sequence $\{\bb^n\}_{n\in \N}$ converges weakly in $L^p_{\rm loc}$. In order to identify the limit we now show that for every $\varphi \in C_c((0,\infty) \times \R^d)$
$$\lim_{n\to \infty}\int_0^\infty \int_{\R^d} \rho^n_t \ast K_n \,\varphi_t \, dx\, dt 
= \int_0^\infty \int_{\R^d} \rho^\eff_t \ast K \,\varphi_t  \, dx\, dt .
$$

Indeed, by standard properties of convolution,
\begin{equation*}
\begin{split}
&\Big| \int_0^\infty \int_{\R^d} \rho^n_t \ast K_n\, \varphi_t \, dx\, dt 
-  \int_0^\infty \int_{\R^d} \rho^\eff_t\ast K\, \varphi_t \, dx\, dt \Big|
\\
&=\Big| \int_0^\infty \int_{\R^d} \rho^n_t \,\varphi_t\ast K_n  \, dx\, dt 
-  \int_0^\infty \int_{\R^d} \rho^\eff_t \,\varphi_t \ast K \, dx\, dt \Big|
\\
&\leq \Big| \int_0^\infty \int_{\R^d} (\rho^n_t- \rho^\eff_t)\, \varphi_t \ast K \, dx\, dt \Big|
+
\Big| \int_0^\infty \int_{\R^d} \rho^n_t\, (\varphi_t \ast K -\varphi_t \ast K \ast \psi_n) \, dx\, dt \Big|
\\
&\leq \Big| \int_0^\infty \int_{\R^d} (\rho^n_t- \rho^\eff_t)\, \varphi_t \ast K \, dx\, dt \Big|
+\Bigl(\sup_{t\in (0,\infty)} \| \rho^n_t\|_{L^1(\R^d)}\Bigr)\,
\|\varphi_t \ast K -\varphi_t \ast K \ast \psi_n \|_{L^\infty((0,\infty) \times \R^d)}.
\end{split}
\end{equation*}
Letting $n \to \infty$, the first term converges to $0$ thanks to the weak convergence \eqref{eqn:conv-densities} of $\rho^n_t$ to $\rho^\eff_t$ and the fact that $\varphi \ast K= \varphi \ast (1_{B_1}K) + \varphi \ast (1_{\R^d \setminus B_1} K)$ is a bounded continuous function, compactly supported in time and decaying at infinity in space. The second term, in turn, converges to $0$ since the first factor is bounded (see \eqref{eqn:dens-bound}) and $\varphi_t \ast K \ast \psi_n$ converges to $\varphi_t \ast K$ uniformly in $(0,\infty)\times \R^d$.

 This computation identifies the weak limit of $ \rho^n_t \ast K_n$ in $L^1_{\rm loc}([0,T] \times \R^{2d})$, showing that it coincides with $ \rho^{\eff}_t \ast K$ and proving \eqref{eqn:conv-b}.
 
 \smallskip
 
 We now prove \eqref{eqn:conv-b-forte-deb-VP}.
First of all, since $K \in W^{\alpha,p}_{\rm loc}(\R^d;\R^d)$ for every $\alpha<1$ and $p< d/(d-1+\alpha)$,\footnote{
This can be seen by a direct computation, using the definition of fractional Sobolev spaces.} using Young's inequality we deduce that, for any $t\in (0,\infty)$,
$$
\|\rho^n_t \ast K_n\|_{W^{\alpha,p}(B_R; \R^d)} =\|(\rho^n_t \ast \psi_n) \ast K\|_{W^{\alpha,p}(B_R; \R^d)} 
\leq C(R)\|\rho^n_t \ast \psi_n\|_{L^{1}(\R^d)}.
$$
Since $\|\psi_n\|_{L^{1}(\R^d)}=1$, thanks to \eqref{eqn:l1normsfn} we deduce that the last term is bounded
independently of $t$ and $n$, that is, for every $R>0$,
\begin{equation}
\label{eqn:fracsob}
\sup_{t\in (0,\infty)}\sup_{n\in \N} \|\rho^n_t \ast K_n\|_{W^{\alpha,p}(B_R; \R^d)} <\infty.
\end{equation}
Hence, by a classical embedding between fractional Sobolev spaces and Nikolsky spaces (see for instance \cite[Lemma 2.3]{KM})
we find that, for $|h| \leq R$,
$$\int_{B_R} |\rho^n_t \ast K_n(x+h) - \rho^n_t \ast K_n(x)|^p \, dx \leq C\bigl(p,\alpha,R,  \|\rho^n_t \ast K_n\|_{W^{\alpha,p}(B_{2R}; \R^d)}\bigr)\,
|h|^{\alpha p},
$$
from which \eqref{eqn:conv-b-forte-deb-VP} follows.

\smallskip

\noindent {\bf Step 5: conclusion.}
Thanks to \eqref{eqn:conv-b} and \eqref{eqn:conv-b-forte-deb-VP}, we can apply the stability result from \cite[Theorem II.7]{lions} (which does not require any growth condition on the vector fields, see also \cite[Proposition 6.5]{amcofi} for the stability of the associated flows)
to deduce that, for every $k \in \N$, $f^k$ is a weakly continuous distributional solution of the continuity equation starting from $f^k_0$,
so by linearity also $F^m:= \sum_{k=1}^m f^k$ is a distributional solution for every $m \in \N$.

Since $F^m$ is bounded, Theorem~\ref{thm:repr-foliat} gives that $F^m$ is a renormalized solution for every $m\in \N$. Letting $m\to \infty$, because $F^m \to f$ strongly in $L^1_{\rm loc}((0,\infty) \times \R^{2d})$ we obtain that $f$ is a renormalized solution of the continuity equation starting from $f_0$ with vector field $\bb$. Together with \eqref{eqn:rhoeffgrossa}, \eqref{eqn:f-L1bound}, and \eqref{eqn:dens-bound}, this proves that $(f_t,\rho^\eff_t)$ is a generalized solution of the Vlasov-Poisson equation starting from $f_0$ according to Definition~\ref{defn:gensol}.

\smallskip

Finally, the fact that $f$ is transported by the Maximal Regular Flow associated to $\bb_t$
simply follows by the fact that each density $f^k$ is transported by Maximal Regular Flow associated to $\bb_t$
(thanks to Theorem~\ref{thm:vp-bounded1}) and that $f=\sum_{k=0}^\infty f^k$ is an absolutely convergent series (see \eqref{eqn:f-L1bound}).
{Also, thanks to Theorem~\ref{thm:distr-on-integral-curves} we deduce that $f_t$ belongs to $C([0,\infty); L^1_{\rm loc}(\R^{2d}))$.}
%
%
\end{proof}

The proof of Corollary~\ref{cor:dato-L1} follows the lines of the proof of Theorem~\ref{thm:dato-mis}, obtained by approximating both the initial datum and the kernel with a sequence of smooth data with uniformly bounded energy. In turn, this bound ensures that the approximating sequence of phase-space distributions is tight in the $v$ variable uniformly in time, allowing us to show that $\rho^\eff_t =\rho_t$ for a.e.~$t\in(0,\infty)$. The approximation of the initial datum with a smooth sequence having uniformly bounded energy is a technical task that we describe in the next lemma.

\begin{lemma}\label{lemma:approx-initial}
Let $d \geq 3$, let $\psi$ be a standard convolution kernel, and set $\psi_k(x) := k^d\psi(kx)$ for every $k \geq 1$.
Let $f_0 \in L^1(\R^{2d})$ be an initial datum of finite energy, namely
$$
\int_{\R^{2d}} |v|^2 f_0(x,v)\, dx\, dv + \int_{\R^d} [H \ast \rho_0](x)  \, \rho_0(x) \, dx < \infty,
$$
where $\rho_0 (x) := \int_{\R^d} f_0(x,v) \, dv$ and $H(x) := \frac{c_d}{d-2} |x|^{2-d}$ for every $x\in \R^d$.
Then there exist a sequence of functions $\{ f^n_0\}_{n \in \N} \subset C^\infty_c(\R^{2d})$ and a sequence $\{k_n\}_{n\in \N}$ such that $k_n \to \infty$ and, setting $\rho^n_0 (x) = \int_{\R^d} f^n_0(x,v) \, dv$,
\begin{equation}
\label{eqn:init-en-bound}
\lim_{n\to \infty} \Big( \int_{\R^{2d}} |v|^2 f^n_0\, dx\, dv + \int_{\R^d} H \ast \psi_{k_n} \ast \rho^n_0 \, \rho^n_0\, dx \Big)
=\int_{\R^{2d}} |v|^2 f_0\, dx\, dv + \int_{\R^d} H \ast \rho_0 \, \rho_0\, dx.
\end{equation}

\end{lemma}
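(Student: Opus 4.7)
My plan is to combine truncation and mollification in $(x,v)$ with a diagonal selection in $k_n$. Fix a standard radial mollifier $\eta\in C^\infty_c(\R^{2d})$, set $\eta_m(z):=m^{2d}\eta(mz)$, and a cutoff $\chi\in C^\infty_c(\R^{2d})$ with $0\le\chi\le 1$ and $\chi\equiv 1$ on $B_1$; write $\chi_n(x,v):=\chi(x/n,v/n)$. Define the auxiliary families $\tilde f^n:=f_0\,\chi_n$ (compactly supported but only $L^1$) and $f^{n,m}:=\tilde f^n \ast \eta_m \in C^\infty_c(\R^{2d})$, together with $\tilde\rho^n:=\int \tilde f^n\,dv$ and $\rho^{n,m}:=\int f^{n,m}\,dv = \tilde\rho^n\ast\bar\eta_m$, where $\bar\eta_m(x):=\int\eta_m(x,v)\,dv$ is a mollifier on $\R^d$.

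For the kinetic energy, monotone convergence gives $\int|v|^2\tilde f^n\,dx\,dv\to\int|v|^2 f_0\,dx\,dv$, and for fixed $n$ a change of variable in the convolution yields
\[
\int |v|^2 f^{n,m}\,dx\,dv = \int \eta_m(y,w)\int |v+w|^2 \tilde f^n(x,v)\,dx\,dv\,dy\,dw,
\]
which converges as $m\to\infty$ to $\int|v|^2\tilde f^n\,dx\,dv$ by dominated convergence (using the compact support of $\tilde f^n$). For the potential energy I reformulate via Plancherel: in $d\ge 3$ one has
\[
\int H\ast\rho\cdot\rho\,dx = c_d'\int|\xi|^{-2}|\widehat\rho(\xi)|^2\,d\xi,\qquad \int H\ast\psi_k\ast\rho\cdot\rho\,dx = c_d'\int|\xi|^{-2}\widehat\psi_k(\xi)|\widehat\rho(\xi)|^2\,d\xi,
\]
for a dimensional constant $c_d'>0$. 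The key observation is that convolution with a nonnegative $L^1$-normalized kernel multiplies $\widehat\rho$ by a symbol bounded by $1$ in modulus, hence does not increase the weighted integral. Combined with the pointwise dominance $0\le\tilde\rho^n\le\rho_0$ (which yields $\int H\ast\tilde\rho^n\cdot\tilde\rho^n\le\int H\ast\rho_0\cdot\rho_0$ directly from $H\ge 0$) and with Fatou on the Fourier side (using pointwise convergence of $\widehat{\tilde\rho^n}\to\widehat{\rho_0}$ and $\widehat{\rho^{n,m}}\to\widehat{\tilde\rho^n}$), this gives
\[
\int H\ast\tilde\rho^n\cdot\tilde\rho^n\,dx \xrightarrow[n\to\infty]{} \int H\ast\rho_0\cdot\rho_0\,dx,\qquad \int H\ast\rho^{n,m}\cdot\rho^{n,m}\,dx \xrightarrow[m\to\infty]{} \int H\ast\tilde\rho^n\cdot\tilde\rho^n\,dx.
\]
Finally, at $(n,m)$ fixed, $\rho^{n,m}\in C^\infty_c(\R^d)$ makes $|\xi|^{-2}|\widehat{\rho^{n,m}}(\xi)|^2$ integrable (the $|\xi|^{-2}$ singularity at the origin is absorbed by $d\ge 3$, while fast decay at infinity is immediate), so dominated convergence with this dominant, together with $\widehat\psi_k(\xi)=\widehat\psi(\xi/k)\to 1$ and $|\widehat\psi_k|\le 1$, gives $\int H\ast\psi_k\ast\rho^{n,m}\cdot\rho^{n,m}\,dx \to \int H\ast\rho^{n,m}\cdot\rho^{n,m}\,dx$ as $k\to\infty$.

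A standard diagonal extraction then selects $m_n,k_n\to\infty$ such that, setting $f^n_0:=f^{n,m_n}$, each of the three approximation errors is at most $1/n$, which proves \eqref{eqn:init-en-bound}. I expect the main difficulty to be the joint handling of the three independent approximations (truncation, $\eta_m$-mollification, $\psi_{k_n}$-smoothing) against the singular, non-integrable kernel $H$; the decisive idea is the Plancherel reformulation, which converts the problem into monotonicity plus pointwise convergence of Fourier symbols, where convolutional non-expansiveness, Fatou's lemma, and dominated convergence all become transparent.
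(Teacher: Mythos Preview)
Your argument is correct. The route, however, differs from the paper's in an instructive way.

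The paper works entirely in physical space and splits the approximation differently. First it truncates both the \emph{height} and the support of $f_0$ via $\min\{n,\mathbf 1_{B_n}f_0\}$, landing in $L^\infty_c(\R^{2d})$; since $H\ge 0$ and the truncations increase to $f_0$, monotone convergence in physical variables handles this step directly. Then, starting from an $L^\infty_c$ datum, the densities $\rho^n_0$ are bounded with uniformly compact support, so $H\ast\rho^n_0$ is bounded and converges in every $L^p_{\rm loc}$; plain dominated convergence in physical space then deals with the mollification step. The kernel smoothing is treated last, exactly as you do.

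Your version truncates only the support (so $\tilde\rho^n$ need not be bounded) and compensates by reformulating the potential energy on the Fourier side as $c_d'\int|\xi|^{-2}|\widehat\rho|^2$. This lets you squeeze the mollification and truncation steps between the convolution non-expansiveness bound $|\widehat{\rho\ast\bar\eta_m}|\le|\widehat\rho|$ and Fatou's lemma, and makes the $\psi_{k}$-step a clean dominated convergence on Fourier symbols. The trade-off is that you implicitly invoke the Plancherel identity for the Riesz potential on general nonnegative $L^1$ densities with finite energy (equality of the two sides, not merely an inequality); this is standard potential-theoretic fact, but it is doing real work in your proof, whereas the paper's height truncation sidesteps it entirely and keeps the whole argument at the level of monotone and dominated convergence in $x$-space.
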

\begin{Proof} We split the approximation procedure in three steps. Here and in the sequel we use the notation $L^\infty_c$ to denote the space of bounded functions with compact support.

\smallskip

\noindent {\bf Step 1: approximation of the initial datum when $f_0 \in L^\infty_c(\R^{2d})$.}
Assuming that $f_0 \in L^\infty_c(\R^{2d})$,
we claim that there exists $\{ f^n_0\}_{n \in \N} \subset C^\infty_c(\R^{2d})$ such that
\begin{equation}
\label{eqn:en-conv}
\lim_{n\to \infty} \Big( \int_{\R^{2d}} |v|^2 f^n_0\, dx\, dv + \int_{\R^d} H \ast \rho^n_0 \, \rho^n_0\, dx \Big)
=\int_{\R^{2d}} |v|^2 f_0\, dx\, dv + \int_{\R^d} H \ast \rho_0 \, \rho_0\, dx.
\end{equation}

To this end, consider smooth functions $f^n_0$ which converge to $f_0$ pointwise, whose $L^\infty$ norms are bounded by $\|f_0\|_{L^\infty(\R^{2d})}$, and whose supports are all contained in the same ball. By construction the densities $\rho^n_0$ are bounded as well and their supports are also contained in a fixed ball; moreover, the functions $H \ast \rho^n_0$ are bounded and converge to $H \ast \rho_0$ locally in every $L^p_{\rm loc}$. By dominated convergence, these observations show the validity of \eqref{eqn:en-conv}.

\smallskip

\noindent {\bf Step 2: approximation of the initial datum when $f_0 \in L^1(\R^{2d})$.} 
Assuming that $f_0 \in L^1(\R^{2d})$,
we claim that there exists a sequence of functions $\{ f^n_0\}_{n \in \N} \subset C^\infty_c(\R^{2d})$ such that \eqref{eqn:en-conv} holds.

Indeed, by Step 1 it is enough to approximate $f_0$ with a sequence in $L^\infty_c(\R^{2d})$ with converging energies. To this aim, for every $n\in \N$ we define the truncations of $f_0$ given by
$$f^n_0(x,v):= \min\{ n, 1_{B_n}(x,v) f_0(x,v) \} \qquad (x,v) \in \R^{2d}.$$
Since $H \geq 0$ the integrands in the left-hand side of \eqref{eqn:en-conv} converge monotonically, hence the integrals converge by monotone convergence.

\smallskip

\noindent {\bf Step 3: approximation of the kernel.} 
We conclude the proof of the lemma.
In order to approximate the kernel, we notice that, given the sequence of functions $f^n_0 \in C^\infty_c(\R^d)$ provided by Steps 1-2, for $n\in \N$ fixed we have
$$\lim_{k\to \infty} \int_{\R^d} H \ast \psi_{k} \ast \rho^n_0 \, \rho^n_0\, dx
=\int_{\R^d} H \ast \rho^n_0 \, \rho^n_0\, dx.
$$
Hence, choosing $k_n$ sufficiently large so that
$$ \Big| \int_{\R^d} H \ast \psi_{k_n} \ast \rho^n_0 \, \rho^n_0\, dx
-\int_{\R^d} H \ast \rho^n_0 \, \rho^n_0\, dx\Big| \leq \frac{1}{n},$$
we conclude the proof of the approximation lemma.
\end{Proof}

We now begin the proof of Corollary~\ref{cor:dato-L1}.
We first prove the existence of renormalized solutions, while we postpone the proof of properties (i)-(ii)-(iii) to the end of the section,
as we shall first need some few other preliminary estimates
\begin{proof}[Proof of Corollary~\ref{cor:dato-L1}: {existence of renormalized solutions}]
Given $f_0$ with finite energy, let $\{ f^n_0\}_{n \in \N} \subset C^\infty_c(\R^{2d})$ and $\{k_n\}_{n\in \N}$ be as in Lemma~\ref{lemma:approx-initial}.
Also let $K:=c_d \,x/|x|^d$ and $K_n := K \ast \psi_{k_n}$.
Applying verbatim the arguments in Steps 1-3 in the proof of Theorem~\ref{thm:dato-mis} we get a sequence $f_n$ of smooth solutions with kernels $K_n$ such that
\begin{equation}
\label{eqn:weak-conv-fn-coroll}
f^{n} \rightharpoonup f \qquad \mbox{weakly in } L^1([0,T] \times \R^{2d})\quad \text{for any $T>0$},
\end{equation}
and
$$
\rho^n \rightharpoonup \rho^{\eff}  \qquad \mbox{weakly* in }L^\infty((0,T); \Measures_+(\R^d)),
$$
where $\rho^n_t(x):=\int_{\R^d} f^n_t(x,v) \, dv$.
In addition, the conservation of the energy along classical solutions gives that, for every $n\in \N$ and $t\in [0,\infty)$
\begin{equation}
\label{eqn:en-approx} \int_{\R^{2d}} |v|^2 f^n_t\, dx\, dv + \int_{\R^d} H \ast \psi_{k_n} \ast \rho^n_t \, \rho^n_t\, dx =  \int_{\R^{2d}} |v|^2 f^n_0\, dx\, dv + \int_{\R^d} H \ast \psi_{k_n} \ast \rho^n_0 \, \rho^n_0\, dx \leq C,
\end{equation}
Hence, since $H \geq 0$ we deduce that
\begin{equation}
\label{eqn:en-down}\sup_{n\in \N} \sup_{t\in [0,\infty)} \int_{\R^{2d}} |v|^2 f^n_t\, dx\, dv \leq C,
\end{equation}
and by lower semicontinuity of the kinetic energy we deduce that, for every $T>0$,
\begin{equation}
\label{eqn:lsc-kinetic}
\int_0^T \int_{\R^{2d}} |v|^2 f_t\, dx\, dv \, dt \leq \liminf_{n\to \infty} \int_0^T
  \int_{\R^{2d}} |v|^2 f^n_t\, dx\, dv \, dt \leq C\,T.
\end{equation}
We now want to exploit \eqref{eqn:en-down} and \eqref{eqn:lsc-kinetic} to show that $\rho^\eff = \rho$, where $\rho_t (x) := \int_{\R^d} f_t(x,v) \, dv\in L^\infty((0,T); L^1(\R^d))$. For this, we want to show that for any $\varphi \in C_c((0,\infty) \times \R^{d})$
\begin{equation}
\label{eqn:conv-rhon}
\lim_{n\to \infty}
\int_0^\infty\int_{\R^{d}} \varphi \,\rho^n_t \, dx \, dt
=
\int_0^\infty\int_{\R^{d}} \varphi \,\rho_t \, dx \, dt.
\end{equation}
To this aim, for every $k\in \N$ we consider a continuous nonnegative function $\zeta_k:\R^d \to [0,1]$ which equals $1$ inside $B_k$ and $0$ outside $B_{k+1}$, and observe that
\begin{align*}
\int_0^\infty\!\!\!\int_{\R^{d}} \varphi\, (\rho^n_t-\rho_t) \, dx \, dt 
=&
 \int_0^\infty\!\!\!\int_{\R^{2d}} \varphi_t(x) \,f^n_t(x,v) \,(1-\zeta_k(v)) \, dx \, dv \, dt
 \\
 &+  \int_0^\infty\!\!\!\int_{\R^{2d}} \varphi_t(x)\, (f^n_t(x,v)- f(x,v))\, \zeta_k(v) \, dx \, dv \, dt
\\&+ \int_0^\infty\!\!\!\int_{\R^{2d}} \varphi_t(x) \,f_t(x,v) \,(\zeta_k(v)-1) \, dx \, dv \, dt.
\end{align*}
The second term in the right-hand side converges to $0$ by the weak convergence of $f^n$ to $f$ in $L^1$,
while, thanks to \eqref{eqn:en-down} and \eqref{eqn:lsc-kinetic}, the other two terms are estimated as
$$
\Big| \int_0^\infty\!\!\!\int_{\R^{2d}} \varphi \,f^n_t(x,v) \,(1-\zeta_k(v)) \, dx \, dv \, dt \Big| 
\leq
\frac{\| \varphi\|_{\infty}}{k^2} \int_0^T\!\!\!\int_{\R^{2d}}  f^n_t(x,v) |v|^2  \, dx \, dv \, dt  
\leq
\frac{C\,T\| \varphi\|_{\infty}}{k^2},
$$
and
$$
\Big| \int_0^\infty\!\!\!\int_{\R^{2d}} \varphi\, f_t(x,v) \,(1-\zeta_k(v)) \, dx \, dv \, dt \Big| 
\leq
\frac{C\,T\| \varphi\|_{\infty}}{k^2}.
$$
Letting $k \to \infty$, this proves \eqref{eqn:conv-rhon}.
Thanks to this fact, the conclusion of the proof proceeds exactly as in Steps 4 and 5 in the proof of Theorem~\ref{thm:dato-mis} with $\rho^\eff_t= \rho_t$.
\end{proof}

\begin{remark}\label{rmk:infiniteen}{\rm 
As shown in Corollary~\ref{cor:dato-L1},
the construction from Theorem~\ref{thm:dato-mis} provides distributional solutions of the Vlasov-Poisson system if further assumptions are made on the initial datum, such as finiteness of the total energy.
Still, there are examples of infinite energy data such that the generalized solution built in Theorem~\ref{thm:dato-mis} is in fact distributional.
For instance,
in~\cite{pe} Perthame considers an initial datum $f_0 \in L^1 \cap L^\infty(\R^6)$ with $(1+|x|^2) f_0 \in L^1(\R^6)$ and infinite energy, and he shows the existence of a solution $f \in L^\infty( [0,\infty); L^1 \cap L^\infty(\R^6))$
 of the Vlasov-Poisson system such that the quantities
\begin{equation}
\label{eqn:pert}
 t^{1/2} \|E_t\|_{L^2}, \qquad t^{3/5} \| \rho_t\|_{L^{5/3}}, \qquad \int_{\R^6} \frac{ |x-vt|^2}{t} f_t(x,v) \, dx \, dv
\end{equation}
are bounded for all $t \in (0,\infty)$.
It can be easily seen that, under Perthame's assumptions, the construction in the proof of Theorem~\ref{thm:dato-mis} provides a solution of the Vlasov-Poisson equation as the one built in~\cite{pe}.
In particular, thanks to the a priori estimate \eqref{eqn:pert} on the approximating sequence, it is easy to see that $\rho^\eff = \rho$, therefore providing a Lagrangian (and therefore renormalized and distributional) solution of Vlasov-Poisson.

Analogously, under the assumptions of \cite{za}, a similar argument shows that the generalized solutions built in Theorem~\ref{thm:dato-mis} solve the classical
Vlasov-Poisson system.}
\end{remark}

{
Our goal now is to prove the validity of properties (i)-(ii)-(iii) in Corollary \ref{cor:dato-L1}.
As we shall see, the proof of the energy inequality \eqref{eqn:bdd-en} is based on the conservation of energy along approximate solutions and on a lower semicontinuity argument.
Notice that, since $-\Delta H=\delta_0$, a formal integration by parts (rigorously justified in the case that $\mu$ has smooth, compactly supported density with respect to the Lebesgue measure) shows that, for every $ \mu \in \Measures_+(\R^d)$,
\begin{equation}
\label{eqn:integriamo-per-parti}
\int_{\R^d} H \ast \mu(x) \, d \mu (x) = \int_{\R^d} |\nabla H \ast \mu(x)|^2 \, d x,
\end{equation}
meaning that, if one of the two sides is finite, than so is the other and they coincide.
The above identity would immediately imply the convexity of the potential energy and its lower semicontinuity with respect to the weak* convergence of measures. However, since the justification of \eqref{eqn:integriamo-per-parti} requires some work, we shall prove directly the lower semicontinuity.

\begin{lemma}Let  $d \geq 3$ and  $H(x) := \frac{c_d}{d-2} |x|^{2-d}$, with the convention $H(0)=+\infty$. Then the functional
$$\F(\mu):= \int_{\R^d} H \ast \mu(x) \, d \mu (x), \qquad \mu \in \Measures_+(\R^d),$$
is 
lower semicontinuous with respect to the weak* topology of $\Measures(\R^d)$.
\end{lemma}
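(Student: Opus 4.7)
My plan is to write $\F(\mu)$ as a single integral against the product measure $\mu\otimes\mu$ and then approximate $H$ monotonically from below by bounded continuous functions with compact support. Concretely,
$$
\F(\mu)=\int_{\R^{d}\times\R^{d}}H(x-y)\,d(\mu\otimes\mu)(x,y),
$$
so the starting point is that if $\mu_n\rightharpoonup\mu$ weakly$^*$ in $\Measures(\R^d)$, then Banach--Steinhaus yields a uniform bound $\sup_n\mu_n(\R^d)<\infty$, and consequently $\mu_n\otimes\mu_n\rightharpoonup\mu\otimes\mu$ weakly$^*$ tested against any $\phi\in C_0(\R^{2d})$. This convergence of product measures reduces, by linearity, to tensor products $\phi_1(x)\phi_2(y)$ (which factor into products of integrals in $\mu_n$), and then extends to all of $C_0(\R^{2d})$ by density of finite sums of tensor products, combined with the uniform mass bound to control the error uniformly in $n$.

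Next I approximate $H$ from below. Pick cutoffs $\eta_R\in C_c(\R^d;[0,1])$ with $\eta_R\equiv 1$ on $B_R$ and $\supp\eta_R\subset B_{R+1}$, and for $R,k\in\N$ set
$$
H_{R,k}(x,y):=\eta_R(x)\,\eta_R(y)\,\min\{H(x-y),k\}.
$$
The function $z\mapsto\min\{H(z),k\}$ is nonnegative, bounded by $k$, and continuous on $\R^d$ (it equals $k$ in a neighbourhood of the origin, where $H(0)=+\infty$, and coincides with the continuous function $H$ away from it). Hence each $H_{R,k}$ belongs to $C_c(\R^{2d})\subset C_0(\R^{2d})$. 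Moreover $0\leq H_{R,k}(x,y)\leq H(x-y)$ and $H_{R,k}(x,y)\uparrow H(x-y)$ as $R,k\to\infty$ for every $(x,y)\in\R^{2d}$ (on the diagonal both sides diverge, elsewhere the statement is immediate).

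Combining the two ingredients, for each fixed $R,k$ the weak$^*$ convergence of product measures and the inequality $H_{R,k}\leq H(x-y)$ give
$$
\int H_{R,k}\,d(\mu\otimes\mu)=\lim_{n\to\infty}\int H_{R,k}\,d(\mu_n\otimes\mu_n)\leq\liminf_{n\to\infty}\F(\mu_n).
$$
Taking the supremum over $R,k\in\N$ on the left and applying monotone convergence yields
$$
\F(\mu)=\sup_{R,k}\int H_{R,k}\,d(\mu\otimes\mu)\leq\liminf_{n\to\infty}\F(\mu_n),
$$
which is the desired lower semicontinuity. The only step requiring some care is the joint weak$^*$ convergence $\mu_n\otimes\mu_n\rightharpoonup\mu\otimes\mu$ on $C_0(\R^{2d})$; I do not expect a genuine obstacle, since it reduces, via the uniform total mass bound, to the density of finite sums of tensor products in $C_0(\R^{2d})$, which follows from Stone--Weierstrass on compact sets plus a standard tail estimate.
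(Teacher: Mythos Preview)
Your proof is correct and follows essentially the same route as the paper: both write $\F(\mu)=\int_{\R^{2d}}H(x-y)\,d(\mu\otimes\mu)$, observe that $\mu_n\otimes\mu_n\rightharpoonup\mu\otimes\mu$ weakly$^*$ in $\Measures(\R^{2d})$, and then use that $(x,y)\mapsto H(x-y)$ is continuous into $[0,+\infty]$ (hence lower semicontinuous) to conclude. The paper's version simply asserts this last step, while you spell it out explicitly via the monotone approximation $H_{R,k}\uparrow H$ by $C_c$ functions, which is precisely the standard proof of lower semicontinuity of integrals of nonnegative l.s.c.\ functions under weak$^*$ convergence of measures.
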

\begin{proof}
%
%
%
%
Given a sequence of nonnegative measures $\mu^n$ weakly* converging to $\mu$ in $\Measures(\R^d)$, the measures $d\mu^n(x)\,d \mu ^n(y)\in \Measures(\R^{2d})$ weakly* converge to $d\mu(x)\,d\mu(y)$. Hence, since the function $\hat H(x,y) :=H(x-y)$ is continuous as a map from $\R^{2d}$ to $[0,+\infty]$, we deduce that
$$ 
 \int_{\R^{d}}\int_{\R^{d}} H(x-y)\, d\mu(x)\,  d\mu(y) \leq \liminf_{n\to \infty}\int_{\R^{d}}\int_{\R^{d}} H(x-y) \, d\mu^n(x)\,  d\mu^n(y).$$
\end{proof}

The following lemma adapts the previous one to the time-dependent framework. In particular it takes care of a further approximation of the kernel in the right-hand side of \eqref{eqn:en-integrated} below and involves the time dependence of the functional.
 We need this kind of lemma since, at the level of generality of Theorem~\ref{thm:dato-mis}, the weak convergence of the approximating solutions is not pointwise in time, but it happens only as functions in space-time. 
\begin{lemma}\label{lemma:lsc-en-int-in-t} Let  $d \geq 3$, $T>0$, $\phi \in C_c((0,T))$ nonnegative, let $\psi\geq 0$ be a convolution kernel, and let $\psi_n(x) := n^d\psi(nx)$ for every $n \geq 1$. 
Then, for every sequence $\{ \rho^n\}_{n\in \N} \subseteq L^\infty((0,T); \Measures_+(\R^d))$ converging
weakly* in $L^\infty((0,T); \Measures_+(\R^d))$ to $\rho \in L^\infty((0,T); \Measures_+(\R^d))$, 
we have
\begin{equation}
\label{eqn:en-integrated}
\int_0^T \phi(t) 
\int_{\R^d} H \ast \rho_t(x) \, d \rho_t (x) \,dt \leq \liminf_{n\to \infty}\int_0^T \phi(t) \int_{\R^d} H \ast \psi_n\ast \rho^n_t(x) \, d \rho^n_t (x)\,dt.
\end{equation}

\end{lemma}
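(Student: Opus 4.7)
The plan is to decompose $H$ as a positive superposition of Gaussian kernels and handle the $\psi_n$-convolution by exploiting the uniform continuity of these Gaussians; the lemma then reduces to a standard lower semicontinuity argument for the $L^2$-norm under weak*-convergence in $L^\infty(0,T)$. For $d\geq 3$ I shall use the representation
\[
H(w) \;=\; k_d \int_0^\infty s^{d/2-2}\, e^{-s|w|^2}\,ds,\qquad w\in\R^d,
\]
(for an appropriate dimensional constant $k_d>0$, using $\int_0^\infty s^{d/2-2}e^{-su}\,ds = \Gamma(d/2-1) u^{-(d/2-1)}$ with $u=|w|^2$, valid precisely because $d\geq 3$), together with the Gaussian factorization $e^{-s|x-y|^2} = a(s) \int_{\R^d} \gamma_s(x-z)\gamma_s(y-z)\,dz$ where $\gamma_s(z) := e^{-2s|z|^2}$ and $a(s) := (4s/\pi)^{d/2}$. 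By Fubini and the identity $\int (H\ast\psi_n)(x-y)\,d\rho^n_t(x)\,d\rho^n_t(y) = \int H(x-y)\,d\rho^n_t(x)\,d(\psi_n\ast\rho^n_t)(y)$, the right-hand side of \eqref{eqn:en-integrated} (before the $\liminf$) is recast as
\[
\int_0^\infty k_d\, s^{d/2-2}\,a(s)\!\int_{\R^d}\!\int_0^T \phi(t)\,(\gamma_s\ast\rho^n_t)(z)\,(\gamma_s\ast\psi_n\ast\rho^n_t)(z)\,dt\,dz\,ds,
\]
and the left-hand side of \eqref{eqn:en-integrated} equals $\int_0^\infty k_d\, s^{d/2-2} a(s)\int_0^T\phi(t)\|\gamma_s\ast\rho_t\|^2_{L^2(\R^d)}\,dt\,ds$.

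Next, I fix $s>0$ and $z\in\R^d$. Since $\gamma_s(z-\cdot)\in C_0(\R^d)$, the weak* convergence of $\rho^n$ in $L^\infty((0,T);\Measures_+(\R^d))$ forces $t\mapsto (\gamma_s\ast\rho^n_t)(z)$ to converge weakly* in $L^\infty(0,T)$ to $t\mapsto (\gamma_s\ast\rho_t)(z)$. Because $\gamma_s$ is a smooth Gaussian and hence uniformly continuous, $\|\gamma_s\ast\psi_n - \gamma_s\|_{L^\infty(\R^d)}\to 0$ as $n\to\infty$; combined with the uniform mass bound $M:=\sup_{n,t}\|\rho^n_t\|_{\Measures(\R^d)}<\infty$ (provided by the uniform boundedness principle applied to the weak* convergent sequence), this yields
\[
(\gamma_s\ast\psi_n\ast\rho^n_t)(z) \;=\; (\gamma_s\ast\rho^n_t)(z) + \varepsilon_n(s,t,z),\qquad \|\varepsilon_n\|_{L^\infty_{t,z}} \leq M\|\gamma_s\ast\psi_n - \gamma_s\|_{L^\infty} \xrightarrow{n\to\infty} 0.
\]
Multiplying by the uniformly bounded factor $\phi(t)(\gamma_s\ast\rho^n_t)(z)$, integrating in $t$, and invoking the standard lower semicontinuity of the $L^2(\phi\,dt)$-norm under weak* convergence in $L^\infty(0,T)$, I obtain
\[
\liminf_{n\to\infty}\int_0^T \phi(t)(\gamma_s\ast\rho^n_t)(z)(\gamma_s\ast\psi_n\ast\rho^n_t)(z)\,dt \;\geq\; \int_0^T \phi(t)(\gamma_s\ast\rho_t)(z)^2\,dt.
\]

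Finally, every integrand in sight is nonnegative, so two successive applications of Fatou's lemma (first in $z\in\R^d$, then in $s\in(0,\infty)$ with the positive weight $k_d s^{d/2-2}a(s)$) deliver
\[
\liminf_{n\to\infty}\int_0^T \phi(t)\int(H\ast\psi_n)(x-y)\,d\rho^n_t(x)\,d\rho^n_t(y)\,dt \;\geq\; \int_0^T \phi(t)\int H(x-y)\,d\rho_t(x)\,d\rho_t(y)\,dt,
\]
which is precisely \eqref{eqn:en-integrated}. The only delicate point of the argument is the passage to the limit in the cross-product of $\gamma_s\ast\rho^n_t$ and $\gamma_s\ast\psi_n\ast\rho^n_t$; this is handled uniformly in $(t,z)$ thanks to the $L^\infty$-smallness of $\gamma_s\ast\psi_n - \gamma_s$, which is where the approximating identity $\psi_n\to\delta_0$ enters, and this step uses no property of $\psi$ beyond being a nonnegative probability density.
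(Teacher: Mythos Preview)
Your proof is correct and complete, but it takes a markedly different route from the paper's. The paper argues in two lines: it observes that the product measures $d(\psi_n\ast\rho^n_t)(x)\,d\rho^n_t(y)\,dt$ on $(0,T)\times\R^{2d}$ converge weakly* to $d\rho_t(x)\,d\rho_t(y)\,dt$, and then invokes lower semicontinuity of integration against the $[0,+\infty]$-valued continuous function $\phi(t)H(x-y)$. Your approach instead linearizes the bilinear form by writing $H$ as a superposition of Gaussians, uses the Gaussian semigroup identity to factor each Gaussian through an $L^2$ pairing, and reduces everything to the weak lower semicontinuity of the $L^2(\phi\,dt)$-norm applied pointwise in the auxiliary variables $(s,z)$, followed by Fatou. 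The paper's argument is shorter and more conceptual, but the claimed weak* convergence of the time-fibred product measures is not entirely trivial (products of weak* limits in $L^\infty(0,T)$ need not converge), so some justification is being suppressed there. Your argument, by contrast, is longer but entirely self-contained: it avoids the product-measure step altogether, and it makes explicit exactly where $\psi_n\to\delta_0$ enters, namely through $\|\gamma_s\ast\psi_n-\gamma_s\|_{L^\infty}\to 0$ for each fixed Gaussian scale $s$. One minor remark: your intermediate identity $\int(H\ast\psi_n)(x-y)\,d\rho^n_t(x)\,d\rho^n_t(y)=\int H(x-y)\,d\rho^n_t(x)\,d(\psi_n\ast\rho^n_t)(y)$ tacitly uses that $\gamma_s$ is even; since it is radial, this is harmless.
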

\begin{proof} Since $\psi_n\ast \rho^n_t \, dt $ weakly* converges to $ \rho_t \, dt $ in $\Measures((0,T) \times \R^d)$, the sequence of nonnegative measures $\psi_n\ast\rho^n_t(x)\, \rho ^n_t(y) \,dt \in \Measures((0,T) \times \R^{2d})$ weakly* converges to $\rho_t(x) \,\rho _t(y)\, dt$. Hence, since the function $\phi(t) H(x-y)$ is continuous as a map from $(0,T)\times\R^{2d}$ to $[0,+\infty]$, we get
that \eqref{eqn:en-integrated} holds. 
\end{proof}

In the following lemma we establish a general inequality between the potential energy and the $L^2$-norm of the force field, that will be used to show the property (iii) in Corollary~\ref{cor:dato-L1}.
\begin{lemma}\label{lemma:force-bound}
Let  $d \geq 3$ and $H(x) := \frac{c_d}{d-2} |x|^{2-d}$. Then, for every $\rho \in L^1(\R^d)$ nonnegative,
\begin{equation}
\label{eqn:energyineq}
\int_{\R^d} H \ast \rho \, \rho \,dx \geq \int_{\R^d} |\nabla H \ast \rho|^2 \, d x.
\end{equation}
\end{lemma}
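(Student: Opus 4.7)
The plan is to derive the inequality via a monotone approximation argument, reducing to the case of bounded, compactly supported densities where the identity $\int H*\rho\cdot\rho\,dx=\int|\nabla H*\rho|^2\,dx$ can be justified by a direct integration by parts.

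First, I define the truncations $\rho_n(x):=\min\{\rho(x),n\}\,1_{B_n}(x)$, so that $\rho_n\uparrow\rho$ pointwise and $\rho_n\in L^\infty(\R^d)$ with compact support. Set $U_n:=H*\rho_n$. Since $\rho_n\in L^\infty$ with compact support and $H\in L^1_{\rm loc}$, classical Newtonian potential theory gives $U_n\in C^1(\R^d)\cap W^{2,p}_{\rm loc}(\R^d)$ for every $p<\infty$, with $-\Delta U_n=\rho_n$ pointwise almost everywhere. Because $\rho_n$ is supported in $B_n$, a direct comparison with the kernel yields the decay estimates $|U_n(x)|\lesssim|x|^{2-d}$ and $|\nabla U_n(x)|\lesssim|x|^{1-d}$ as soon as $|x|\geq 2n$. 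Integrating by parts on a large ball $B_R$ yields
$$\int_{B_R}U_n\,\rho_n\,dx=\int_{B_R}|\nabla U_n|^2\,dx-\int_{\partial B_R}U_n\,\partial_\nu U_n\,d\sigma,$$
and the boundary term is controlled by $C\,R^{d-1}\cdot R^{2-d}\cdot R^{1-d}=C\,R^{2-d}\to 0$ as $R\to\infty$, thanks to the assumption $d\geq 3$. Therefore one obtains the exact identity $\int U_n\,\rho_n\,dx=\int|\nabla U_n|^2\,dx$ for every $n$.

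Next I pass to the limit $n\to\infty$. The left-hand side can be written as the double integral $\int\!\!\int H(x-y)\rho_n(x)\rho_n(y)\,dx\,dy$ with nonnegative integrand, so by monotone convergence it converges to $\int H*\rho\cdot\rho\,dx$. For the right-hand side I argue by dichotomy: if $\int H*\rho\cdot\rho\,dx=+\infty$ the inequality is trivial, so assume the quantity is finite. Then the identity for $\rho_n$ gives $\sup_n\|\nabla U_n\|_{L^2(\R^d)}<\infty$, hence $\nabla U_n=(\nabla H)*\rho_n$ admits a weak $L^2$ limit $W$ along a subsequence. Using $\rho_n\to\rho$ in $L^1$ and $\nabla H\in L^1_{\rm loc}$ one checks that $(\nabla H)*\rho_n\to\nabla H*\rho$ in $\mathcal{D}'(\R^d)$; by uniqueness of the distributional limit $W=\nabla H*\rho$, and the lower semicontinuity of the $L^2$ norm under weak convergence gives
$$\int|\nabla H*\rho|^2\,dx\leq\liminf_{n\to\infty}\int|\nabla U_n|^2\,dx=\int H*\rho\cdot\rho\,dx,$$
which is the desired inequality.

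The main technical obstacle is the justification of the integration by parts in the first step: one must simultaneously control the interior regularity of $U_n$ (to apply Green's formula) and the decay at infinity of both $U_n$ and $\nabla U_n$ (to discard the boundary term). The former follows from standard elliptic estimates applied to $\rho_n\in L^\infty$ with compact support, while the latter crucially uses $d\geq 3$ so that the Newtonian kernel and its gradient decay at infinity. In dimension $d=2$ the logarithmic kernel does not decay and the boundary term cannot be discarded, which is consistent with the need to introduce a background density as discussed in Remark~\ref{rmk:2d}.
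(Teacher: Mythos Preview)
Your proof is correct and follows essentially the same approach as the paper: both establish the exact identity $\int U_n\rho_n=\int|\nabla U_n|^2$ for the truncations $\rho_n=\min\{\rho,n\}1_{B_n}$ via integration by parts on $B_R$ with $R\to\infty$, and then pass to the limit using monotone convergence on the left and weak $L^2$ lower semicontinuity on the right. The only cosmetic difference is that the paper first treats smooth compactly supported $\rho$ and then approximates to reach $L^\infty_c$, whereas you invoke Newtonian potential regularity for $L^\infty_c$ data directly; both routes are standard and equivalent in effect.
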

\begin{proof}We split the approximation procedure in three steps. 
\smallskip

\noindent {\bf Step 1: Proof of equality in \eqref{eqn:energyineq} for $\rho \in L^\infty_c(\R^d)$.} 
Consider first $\rho$ a smooth, compactly supported function. For every $R>0$, the integration by parts formula gives
$$\int_{B_R} H \ast \rho \, \rho\, dx  = \int_{B_R} |\nabla H \ast \rho|^2 \, d x - \int_{\partial B_R}  H \ast \rho \,  \nabla (H \ast \rho) \cdot \nu_{B_R} \, d\H^{d-1}.$$
By approximation, the same identity holds when $\rho$ is bounded and compactly supported.
Now, since $H \ast \rho$ and $\nabla H \ast \rho$ respectively decay as $R^{2-d}$ and $R^{1-d}$ when evaluated on $\partial B_R$,
we see that the boundary term in the previous equality disappears as $R\to \infty$ (recall that $d \geq 3$).
This proves that equality holds in \eqref{eqn:energyineq} for $\rho \in L^\infty_c(\R^d)$

\smallskip

\noindent {\bf Step 2: Proof of \eqref{eqn:energyineq} for $\rho \in L^1(\R^d)$.} Given $\rho \in L^1(\R^d)$, 
for every $n\in \N$ consider the truncations of $\rho$ given by $\rho^n:= \min\{ n, 1_{B_n} \rho\}$.
Since $H \geq 0$, it follows by monotone convergence and Step 1 that
$$\int_{\R^d} H \ast \rho  \,\rho\, d x =\lim_{n\to \infty} \int_{\R^d} H \ast \rho^n \, \rho^n\, d x 
\geq \lim_{n\to \infty}  \int_{\R^d} |\nabla H \ast \rho^n|^2 \, d x.$$
Assuming without loss of generality that the left hand side is finite, we
see that the sequence $\{ \nabla H \ast \rho^n \}_{n\in \N}$ is bounded in $L^2$.
Hence, since its limit in the sense of distribution is $ \nabla H \ast \rho$, the lower semicontinuity of the $L^2$-norm with respect to weak convergence implies 
that $ \nabla H \ast \rho \in L^2(\R^d)$ and \eqref{eqn:energyineq} holds.
\end{proof}
%

\begin{proof}[Proof of Corollary~\ref{cor:dato-L1}: proof of properties (i)-(ii)-(iii)] In order to prove the desired properties (in particular \eqref{eqn:bdd-en}) we perform a lower semicontinuity argument on the energy of the approximate solutions $f^n$ constructed in the first part of the proof of Corollary~\ref{cor:dato-L1}.
%

\smallskip

\noindent {\bf Step 1: bound on the total energy for $\Leb{1}$-almost every time.} 
Consider a nonnegative function $\phi \in C_{c}((0,\infty))$. Testing the weak convergence \eqref{eqn:weak-conv-fn-coroll} with $\phi(t)\,|v|^2 \chi_r(x,v)$ where $\chi_r\in C^\infty_c(\R^{2d})$ is a nonnegative cutoff function between $B_r$ and $B_{r+1}$, we find that, for every $r>0$,
\begin{equation*}
\begin{split}
\int_0^\infty \int_{\R^{2d}}\phi(t)\, |v|^2 \chi_r(x,v) \,f_t\, dx\, dv \, dt &= \lim_{n\to \infty} \int_0^\infty
  \int_{\R^{2d}}  \phi(t) \,|v|^2 \chi_r(x,v)\, f^n_t\, dx\, dv \, dt 
\\&\leq \liminf_{n\to \infty} \int_0^\infty \phi(t)
  \int_{\R^{2d}} |v|^2 f^n_t\, dx\, dv \, dt .
\end{split}
\end{equation*}
Taking the supremum with respect to $r$, we deduce that
\begin{equation}
\label{eqn:lsc-kinetic-integr}
\int_0^\infty \phi(t) \int_{\R^{2d}} |v|^2 f_t\, dx\, dv \, dt \leq \liminf_{n\to \infty} \int_0^\infty \phi(t)
  \int_{\R^{2d}} |v|^2 f^n_t\, dx\, dv \, dt .
\end{equation}
As regards the potential energy, 
it follows from Lemma~\ref{lemma:lsc-en-int-in-t} 
that 
\begin{equation}
\label{eqn:lsc-potential-integr}
\int_0^\infty \phi(t) \int_{\R^d} H \ast \rho_t \, \rho_t\, dx \, dt
 \leq \liminf_{n\to \infty}
 \int_0^\infty \phi(t) \int_{\R^d} H \ast \psi_{k_n} \ast \rho^n_t \, \rho^n_t\, dx \, dt
\end{equation}
Adding \eqref{eqn:lsc-kinetic-integr} and \eqref{eqn:lsc-potential-integr}, by the subadditivity of the $\liminf$ and by the energy bound  \eqref{eqn:en-approx} 
on the approximating solutions, we find that
\begin{equation*}
\begin{split}
&\int_0^\infty \phi(t) \Big( \int_{\R^{2d}} |v|^2 f_t\, dx\, dv +
\int_{\R^d} H \ast \rho_t \, \rho_t\, dx \Big) \, dt
\\&
\leq
 \lim_{n\to \infty} \int_0^\infty \phi(t)
\Big( \int_{\R^{2d}} |v|^2 f^n_0\, dx\, dv
+ \int_{\R^d} H \ast \psi_{k_n} \ast \rho^n_0 \, \rho^n_0\, dx \Big)\, dt
\\&
=\Big(\int_0^\infty \phi(t) \, dt \Big) \Big( \int_{\R^{2d}} |v|^2 f_0 \, dx\, dv + \int_{\R^d} H \ast \rho_0 \, \rho_0 \, dx\Big).
\end{split}
\end{equation*}
By the arbitrariness of $\phi$ it follows that \eqref{eqn:bdd-en} holds for $\Leb{1}$-a.e. $t\in (0,\infty)$
and that $|v|^2f_t \in L^1_{\rm loc}((0,\infty)\times \R^{2d})$. In particular this allows us to integrate the transport equation $\p_t f_t+\div_{x,v}(\bb_t f_t)=0$
with respect to $v$ on the whole $\R^d$ and obtain
$$
\partial_t \rho_t + \div_x(J_t)=0,\qquad J_t(x):=\int_{\R^d}v\,f_t(x,v)\,dv \in L^1_{\rm loc}((0,\infty)\times \R^{d}).
$$
By classical results on continuity equations, this implies that $\rho_t$ is weakly* continuous in time (see for instance \cite[Lemma 8.1.2]{amgisa}).

\smallskip
\noindent {\bf Step 2: boundedness of the total energy for every time.} 
Observe that the kinetic energy (resp. the potential energy) is lower semicontinuous with respect to strong $L^1_{\rm loc}(\R^{2d})$-convergence of $f$ (resp. weak* convergence in $\Measures(\R^d)$ of $\rho$).
Since \eqref{eqn:bdd-en} holds true for a.e. $t\in (0,\infty)$ by Step 1, and the maps $t \mapsto f_t \in L^1(\R^{2d})$
and $t \mapsto \rho_t \in \Measures(\R^d)$ are continuous for the $L^1_{\rm loc}$ and the weak* convergence respectively,
given any time $\bar t \in [0,\infty)$
it suffices to approximate it with a sequence  $t_n \to \bar t$ such that the energy bound \eqref{eqn:bdd-en} holds for every $t_n$
and let $n \to \infty$ to obtain that  \eqref{eqn:bdd-en} holds for $t=\bar t$.

\smallskip
\noindent {\bf Step 3: strong $L^1_{\rm loc}$-continuity of the physical density and the electric fields.} Given $t \in [0,\infty)$, consider a sequence of times $t_n \to t$.
Fix $r>0$, and notice that for any $R>0$
$$ \int_{B_r} \int_{\R^d} |f_{t_n}-f_t| \, dv \, dx \leq  \int_{B_r} \int_{B_R} |f_{t_n}-f_t| \, dv \, dx + \int_{B_r} \int_{\R^d \setminus B_R} \frac{|v|^2} {R^{2}} (f_{t_n} +f_t) \, dv \, dx.$$
Thanks to  \eqref{eqn:bdd-en}  and the strong $L^1_{\rm loc}$ continuity of $f_t$, we can first let 
$n \to \infty$ and then $R \to \infty$ to deduce that
$$\lim_{n\to \infty} \int_{B_r} |\rho_{t_n}-\rho_t|  \, dx=\lim_{n\to \infty} \int_{B_r} \int_{\R^d} |f_{t_n}-f_t| \, dv \, dx = 0.$$
This proves the strong $L^1_{\rm loc}$-continuity of $\rho_t.$
Since $E_t = K \ast \rho_t$ and $\|\rho_t\|_{L^1(\R^d)} \leq C$, it is simple to see that also $E_t$ is strongly continuous in $L^1_{\rm loc}(\R^d)$.

%
%

%

\smallskip
{
\noindent {\bf Step 4: global characteristics in dimension $3$ and $4$.} 
Since $E_t= \nabla H \ast \rho_t$, 
the bound \eqref{eqn:bdd-en} and Lemma~\ref{lemma:force-bound} allow us to apply Corollary~\ref{cor:vp-bounded1} to deduce that trajectories do not blow up. 
}
\end{proof}

\begin{remark}\label{remark:cons-mass}{\rm 
As a consequence of Theorem~\ref{thm:vp-bounded1},
Corollary~\ref{cor:dato-L1}, and Remarks \ref{rmk:3d} and \ref{rmk:2d}, we deduce that, for $d=2,3,4$, finite energy solutions conserve the mass, namely $\|f_t\|_{L^1(\R^{2d})}=\rho_t(\R^d) = \rho_0(\R^d)=\|f_0\|_{L^1(\R^{2d})}$ for every $t\in [0,\infty)$.
In particular, in this case solutions are strongly continuous in $L^1(\R^{2d})$ and not only in $L^1_{\rm loc}(\R^{2d})$ (see for instance the argument in Step 2
of the proof of Theorem \ref{thm:distr-on-integral-curves}).}
\end{remark}
}

\section{Maximal Regular Flows of the state space and renormalized solutions}
\label{sect:flow}

The aim of this and next section is to develop the abstract theory of Maximal Regular Flows
and Lagrangian/renormalized solutions that are behind the results presented in the previous sections.
We warn the reader that from now on, since the theory is completely general,
we shall often consider flows of vector fields in $\R^d$ and denote by $x$ a point in $\R^d$. Then, for the applications to kinetic equations
in the phase-space $\R^{2d}$, one should apply these results replacing $d$ with $2d$ and $x$ with $(x,v)$.

\subsection{Preliminaries on Maximal Regular Flows}\label{sec:maxflow}

In this section we recall the basic results in \cite{amcofi}, where a local version of the theory of DiPerna-Lions \cite{lions} and Ambrosio \cite{ambrosio} was developed.  First we recall the definition of a local (in space and time) 
version of the Regular Lagrangian Flow introduced by Ambrosio \cite{ambrosio}.
Here and in the sequel, $\BorelSets{\R^d}$ denotes the collection of Borel sets in $\R^d$, 
and $AC([\tau_1,\tau_2];\R^d)$ is the space of absolutely continuous curves on $[\tau_1,\tau_2]$ with values in $\R^d$.

\begin{definition} [Regular Flow] \label{def:regflow}
Let $B\in\BorelSets{\R^d}$, $\tau_1<\tau_2$, and  $\bb: (\tau_1,\tau_2) \times \R^d \to \R^d$ be a Borel vector field. We say that  a Borel map
$\XX: [\tau_1,\tau_2]\times B\to \R^d$ is a 
{\em Regular Flow} (relative to $\bb$) in $[\tau_1,\tau_2]\times B$ if the following two properties hold: 
\begin{itemize}
\item[(i)] for a.e. $x\in B$, $\XX(\cdot,x)\in AC([\tau_1,\tau_2];\R^d)$ and solves the ODE $\dot x(t)=\bb_t(x(t))$ a.e.
in $(\tau_1,\tau_2)$, with the initial condition $\XX(\tau_1,x)=x$;
\item[(ii)] there exists a constant $C=C(\XX)$ satisfying $\XX(t,\cdot)_\#(\Leb{d}\res B)\leq C\Leb{d}$ for all $t\in [\tau_1,\tau_2]$.
\end{itemize}
\end{definition}

Let $T \in (0,\infty)$ and let $\bb:(0,T) \times \R^d \to \R^d$ be a Borel vector field.
The main object of our analysis is the Maximal Regular Flow, which takes into account the possibility of blow-up before time $T$ (or
after time $0$, when an initial condition $s\in (0,T)$ is under consideration).

\begin{definition}[Maximal Regular Flow]\label{def:maxflow}
For every $s\in (0,T)$ we say that  a Borel map $\XX(\cdot, s, \cdot)$ is a {\em Maximal Regular Flow} starting at time $s$ if there exist two Borel maps $T^+_{s,\sXX}:\R^d\to(s,T]$, $T^-_{s,\sXX}:\R^d\to [0,s)$ such that $\XX(\cdot,x)$ is defined in $(T^-_{s,\sXX}(x),T^+_{s,\sXX}(x))$ and the following two properties hold:
\begin{itemize}
\item[(i)] for a.e. $x\in\R^d$, $\XX(\cdot,s,x)\in AC_{\rm loc}((T^-_{s,\sXX}(x),T^+_{s,\sXX}(x));\R^d)$ and solves the ODE $\dot x(t)=\bb_t(x(t))$ a.e.
in $(T^-_{s,\sXX}(x),T^+_{s,\sXX}(x))$, with the initial condition $\XX(s,s,x)=x$;
\item[(ii)] there exists a constant $C=C(s,\XX)$ such that  
\begin{equation}
\label{eqn:incompr-o}
\XX(t,s,\cdot)_\# \big(\Leb{d} \res \{T^-_{s,\sXX}<t< T^+_{s,\sXX} \} \big)\leq C\Leb{d}\qquad\forall \,t\in [0,T];
\end{equation}
\item[(iii)] for a.e. $x\in\R^d$, either $T^+_{s,\sXX}(x)=T$ (resp. $T^-_{s,\sXX}(x)=0$) and $\XX(\cdot,s,x)$ can be 
continuously extended up to $t=T$ (resp. $t=0$) so that $\XX(\cdot,s,x)\in C([s,T];\R^d)$ (resp. $\XX(\cdot,s,x)\in C([0,s];\R^d)$), or 
\begin{equation}\label{eq:blowup}
\lim_{t\uparrow T^+_{s,\sXX}(x)}|\XX(t,s,x)|=\infty \qquad \textit{(resp. }\lim_{t\downarrow T^-_{s,\sXX}(x)}|\XX(t,s,x)|=\infty).
\end{equation}
In particular, $T^+_{s,\sXX}(x)<T$ (resp. $T^-_{s,\sXX}(x) > 0$) implies \eqref{eq:blowup}.
\end{itemize}
\end{definition}

The definition of Maximal Regular Flow can be extended up to the times $s=0$ and $s=T$, setting $T^-_{0,\sXX}\equiv 0$ and
$T^+_{T,\sXX}\equiv T$.

A Maximal Regular Flow has been built in \cite{amcofi} under general local assumptions on $\bb$. Before stating the result, 
we recall these assumptions. For $T\in (0,\infty)$ we are given a Borel vector field $\bb:(0,T)\times\R^d\to\R^d$ satisfying:
\begin{itemize}
\item[{\bf (A1)}] $\int_0^T\int_{B_R}|\bb_t(x)|\,dx\,dt<\infty$ for any $R>0$;
\item[{\bf (A2)}] for any nonnegative $\bar\rho\in L^\infty_+(\R^d)$ with compact support and 
any closed interval $[a,b]\subset [0,T]$, the continuity equation
\begin{equation}
\label{eqn:ce}
\frac{d}{dt}\rho_t+{\rm div\,}(\bb_t\rho_t)=0\qquad\text{in $(a,b)\times\R^d$}
\end{equation}
has at most one solution in the class of all weakly$^*$ nonnegative continuous functions $[a,b]\ni t\mapsto\rho_t$ with $\rho_a=\bar\rho$ and
$\cup_{t\in [a,b]}\supp\rho_t\Subset\R^d$.
\end{itemize}
Since the vector fields that arise in the applications we have in mind are divergence-free, we assume throughout the paper that our velocity field $\bb$ satisfies 
\begin{equation}
\label{eqn:incompb-o-L}
\div \bb_t = 0 \qquad \mbox{in } \R^d\text{ in the sense of distributions,  for a.e. $t\in (0,T)$.}
\end{equation}
The existence and uniqueness of the Maximal Regular Flow after time $s$, as well as the semigroup property, were proved in 
\cite[Theorems 5.7, 6.1, 7.1]{amcofi} assuming a one sided bound (specifically a lower bound) on the
divergence. In this context, uniqueness should be understood as follows: if $\XX$ and $\YY$ are Maximal Regular Flows, for all $s\in [0,T]$ one has
\begin{equation}\label{eq:understanding_uniqueness}
\begin{cases}
T^\pm_{s,\sXX}(x)=T^\pm_{s,\sYY}(x)\,\,\text{for a.e. $x\in\R^d$}&\\
\XX(\cdot,s,x)=\YY(\cdot,s,x)\,\,\text{in $(T_{s,\sXX}^-(x),T^+_{s,\sXX}(x))$ for a.e. $x\in\R^d$.}&
\end{cases}
\end{equation}
Under our assumptions on the divergence, by simply reversing the time variable, the Maximal Regular Flow can be built both forward and backward in time, so we state the result directly in the time-reversible case.

\begin{theorem}[Existence, uniqueness, and semigroup property]\label{thm:maximalflow}
 Let $\bb: (0,T) \times \R^d \to \R^d$ be a Borel vector field satisfying {\bf (A1)} and {\bf (A2)}.
Then the Maximal Regular Flow starting from any $s\in [0,T]$ is unique according to
\eqref{eq:understanding_uniqueness}, and existence is ensured under the additional assumption \eqref{eqn:incompb-o-L}.
In addition, still assuming \eqref{eqn:incompb-o-L}, for all $s\in [0,T]$ the following properties hold:
\begin{itemize}
\item[(i)]  the compressibility constant $C(s,\XX)$ in Definition~\ref{def:maxflow}  equals $1$ and for every $t\in [0,T]$
\begin{equation}
\label{eqn:incompr-R}
\XX(t,s,\cdot)_\# \big(\Leb{d} \res \{T^-_{s,\sXX}<t< T^+_{s,\sXX} \} \big) = \Leb{d} \res\bigl( \XX(t,s,\cdot)(\{T^-_{s,\sXX}<t< T^+_{s,\sXX} \})\bigr);
\end{equation}
\item[(ii)] if $\tau_1 \in [0,s]$, $\tau_2 \in [s,T]$, and $\YY$ is a Regular Flow in $[\tau_1,\tau_2] \times B$, then $T^+_{s,\sXX}>\tau_2$, $T^-_{s,\sXX}<\tau_1$ a.e. in $B$; moreover
\begin{equation}\label{coincidence1-o}
\XX(\cdot,s,x)=\YY(\cdot,\XX(\tau_1,s,x))\quad\text{in $[\tau_1,\tau_2]$, for a.e. $x\in B$};
\end{equation}
\item[(iii)] the Maximal Regular Flow satisfies the semigroup property, namely for all $s,\,s'\in [0,T]$
\begin{equation}\label{eqn:consecutive}
T^{\pm}_{s',\sXX}(\XX(s',s,x))=T^\pm_{s,\sXX}(x),
\qquad\text{for $\Leb{d}$-a.e. $x\in\{T^+_{s,\sXX}> s'>T^-_{s,\sXX}\}$,}
\end{equation}
and, for a.e. $x\in\{T^+_{s,\sXX}>s'>T^-_{s,\sXX}\}$,
\begin{equation}\label{eqn:semig3}
\XX\bigl(t,s',\XX(s',s,x)\bigr)=\XX(t, s, x)\qquad \text{$\forall \,t \in(T^-_{s,\sXX}(x),T^+_{s,\sXX}(x))$.  }
\end{equation}
\end{itemize}
\end{theorem}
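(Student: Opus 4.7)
The plan is to reduce all three parts of the theorem to the uniqueness assumption \textbf{(A2)} for bounded compactly supported solutions of the continuity equation, combined with a stopping-time exhaustion on sublevel sets $\{|x|<R\}$, exploiting the time-reversibility granted by \eqref{eqn:incompb-o-L}.

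\textbf{Uniqueness.} Given two Maximal Regular Flows $\XX$, $\YY$ starting at time $s$, I would fix $R'>R>0$ and $h>0$ and consider the Borel set $A\subset B_R$ of points $x$ for which both trajectories are defined on $[s-h,s+h]$ and stay in $B_{R'}$. On $A$ the pushforwards $\mu^{\sXX}_t:=\XX(t,s,\cdot)_\#(\Leb{d}\res A)$ and $\mu^{\sYY}_t:=\YY(t,s,\cdot)_\#(\Leb{d}\res A)$ are bounded (by Definition~\ref{def:maxflow}(ii)), compactly supported in $B_{R'}$, and both solve the continuity equation \eqref{eqn:ce} with the same Cauchy datum $\Leb{d}\res A$ at time $s$. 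Hypothesis \textbf{(A2)} forces $\mu^{\sXX}_t=\mu^{\sYY}_t$, and a standard disintegration-along-trajectories argument upgrades this to pointwise a.e. coincidence of the curves on $[s-h,s+h]$. Letting $R,R'\to\infty$ and $h$ vary, and then comparing the maximal intervals through the blow-up alternative \eqref{eq:blowup}, yields \eqref{eq:understanding_uniqueness}.

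\textbf{Existence.} Assuming \eqref{eqn:incompb-o-L}, I would mollify $\bb$ to a smooth divergence-free $\bb^\e$, whose classical flow $\XX^\e$ is globally defined and Lebesgue-measure preserving. For each $R$ I introduce the stopping times $T^{+,R}_\e(x):=\inf\{t>s : |\XX^\e(t,s,x)|\geq R\}\wedge T$ and similarly $T^{-,R}_\e(x)$. Assumption \textbf{(A1)} gives uniform $L^1$ bounds on $\bb^\e$ inside $B_R$, hence equi-absolute-continuity of $\XX^\e(\cdot,s,x)$ on $[T^{-,R}_\e(x),T^{+,R}_\e(x)]$; classical compactness extracts a limit Regular Flow $\XX^R$ on the stopped set, whose identification relies once more on \textbf{(A2)}. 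Consistency between $\XX^R$ and $\XX^{R''}$ for $R''>R$ is another consequence of \textbf{(A2)}, so I can set $T^\pm_{s,\sXX}(x):=\lim_R T^{\pm,R}(x)$ and glue the $\XX^R$ into the sought Maximal Regular Flow; the blow-up dichotomy in Definition~\ref{def:maxflow}(iii) is immediate from the definition of $T^{\pm,R}$.

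\textbf{Properties (i)--(iii).} The incompressibility constant in (i) equals $1$ since each $\XX^\e$ is Lebesgue-preserving and weak$^*$-convergence of pushforwards preserves this inequality with constant $1$; the finer identity \eqref{eqn:incompr-R} follows by using \eqref{eqn:incompb-o-L} to run the flow backwards, so that $\XX(s,t,\cdot)$ is well-defined on the image of $\XX(t,s,\cdot)$ and uniqueness identifies it with the inverse. Property (ii) is again an application of \textbf{(A2)}: $\YY(t,\tau_1,\cdot)_\#(\Leb{d}\res B)$ is bounded, compactly supported on $[\tau_1,\tau_2]$, and solves \eqref{eqn:ce}, hence coincides with the pushforward under $\XX(\cdot,\tau_1,\cdot)$; the strict inequalities $T^+_{s,\sXX}>\tau_2$ and $T^-_{s,\sXX}<\tau_1$ a.e.~in $B$ then follow because $\YY$ does not blow up on $[\tau_1,\tau_2]$, ruling out the alternative in \eqref{eq:blowup}. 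Finally, for (iii), one checks that $(t,y)\mapsto \XX(t,s,\XX(s',s,\cdot)^{-1}(y))$, defined on the image of $\XX(s',s,\cdot)\res\{T^+_{s,\sXX}>s'>T^-_{s,\sXX}\}$, satisfies Definition~\ref{def:maxflow} from the starting time $s'$; uniqueness from the first part identifies it with $\XX(\cdot,s',\cdot)$, yielding \eqref{eqn:consecutive} and \eqref{eqn:semig3}.

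\textbf{Main obstacle.} The most delicate point I anticipate is the careful treatment of the blow-up alternative under the exhaustion $R\to\infty$: one needs the Regular Flows constructed on the stopped sets $\{T^{-,R}<s<T^{+,R}\}$ to glue into a single object whose maximal existence interval is exactly the one dictated by \eqref{eq:blowup}, with compressibility constant preserved as $1$ in the limit. This requires propagating the bounds of \textbf{(A1)}--\textbf{(A2)} uniformly in $R$ and $\e$ and is the technical heart of \cite[Theorems 5.7, 6.1, 7.1]{amcofi}; everything else above is essentially a packaging of those technical results in the time-reversible setting made possible by \eqref{eqn:incompb-o-L}.
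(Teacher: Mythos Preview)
The paper does not actually give a proof of this theorem: immediately before the statement it says that existence, uniqueness, and the semigroup property were proved in \cite[Theorems 5.7, 6.1, 7.1]{amcofi} under a one-sided divergence bound, and that under \eqref{eqn:incompb-o-L} one simply applies those results forward and backward in time. Your sketch is consistent with that, and you explicitly acknowledge at the end that the technical heart is \cite{amcofi}; so there is nothing to compare against in the present paper.

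One small comment on your outline of (ii): the statement concerns the flow $\XX(\cdot,s,\cdot)$ with initial time $s\in[\tau_1,\tau_2]$, not $\XX(\cdot,\tau_1,\cdot)$. The way the argument actually goes is to first compare $\YY$ with $\XX(\cdot,\tau_1,\cdot)$ via \textbf{(A2)} (as you wrote), and then use the semigroup property (iii) to relate $\XX(\cdot,\tau_1,\cdot)$ to $\XX(\cdot,s,\cdot)$; alternatively one compares directly at the level of the pushforwards starting from time $s$ after first showing that $\XX(\tau_1,s,\cdot)$ lands in $B$ for a.e.\ $x$ such that $\XX(\cdot,s,x)$ stays bounded on $[\tau_1,\tau_2]$. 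Either way the missing link is minor, but as written your sketch glosses over the passage from initial time $\tau_1$ to initial time $s$.
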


%

\subsection{Uniqueness for the continuity equation and singular integrals}
\label{sect:singular integrals}

In this section we deal with uniqueness of solutions to the continuity equation when the gradient of the vector field is given by the singular integral of a time dependent family of measures. The theorem is a minor variant of a result by Bohun, Bouchut, and Crippa \cite{bbc-abstract} (see also \cite{boucrising}, where the uniqueness is proved for vector fields whose gradient is the singular integral of an $L^1$ function). We give the proof of the theorem under the precise assumptions that we need later on, since \cite{bbc-abstract} deals with globally defined regular flows (hence the authors need to assume global growth conditions on the vector field), whereas here we present a local version of such result.

\begin{theorem}\label{thm:bc}
Let $\bb: (0,T) \times \R^{2d} \to \R^{2d}$ be given by $\bb_t(x,v)=(\bb_{1t}(v) , \bb_{2t}(x))$, where
$$\bb_1 \in L^\infty((0,T); W^{1,\infty}_{\rm loc}(\R^d;\R^d)),\qquad\bb_{2t}= K \ast \rho_t,$$ 
with $\rho \in L^\infty((0,T); \Measures_+(\R^d))$ and $K(x) = x/|x|^{d}$.\\
Then $\bb$ satisfies ${\bf (A2)}$ of Section~\ref{sec:maxflow}, namely the uniqueness of bounded compactly supported
nonnegative distributional solutions of the continuity equation.
\end{theorem}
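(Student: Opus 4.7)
The plan is to adapt the Bouchut--Crippa scheme (as in \cite{boucrising,bbc-abstract}) to the local setting where only bounded, compactly supported solutions are considered. Given two such solutions $\rho^1_t,\rho^2_t \in L^\infty((a,b)\times\R^{2d})$ with common initial datum $\bar\rho$ and supports in a fixed compact set $K\Subset\R^{2d}$, the first step is to reduce the uniqueness question to a statement about trajectories of $\bb$. Since $\rho_t$ is a bounded measure, $\bb_{2t}=K*\rho_t \in L^1_{\rm loc}(\R^d;\R^d)$ and therefore $\bb\in L^1_{\rm loc}$, so I would apply the superposition principle (Theorem~\ref{thm:repr-foliat}) to represent $\rho^i_t = \|\bar\rho\|_{L^1}(e_t)_{\#}\eta^i$ for probability measures $\eta^i$ on $AC([a,b];\R^{2d})$ concentrated on integral curves of $\bb$ issued from $\supp\bar\rho$ and remaining in~$K$. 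Uniqueness of such curves for $\bar\rho$-a.e.\ initial point will then give $\rho^1_t=\rho^2_t$.

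The second step, the technical heart of the argument, is the Bouchut--Crippa pointwise estimate
\begin{equation*}
|(K*\mu)(x)-(K*\mu)(y)| \leq C\,|x-y|\bigl(M\mu(x)+M\mu(y)\bigr) + C\,\mu(\R^d)\,|x-y|\,\bigl(1+\log^+\tfrac{1}{|x-y|}\bigr),
\end{equation*}
valid for any $\mu\in\Measures_+(\R^d)$ and $x,y$ in a fixed ball, with $M$ a suitable (e.g.\ uncentered) maximal function. This follows from the Calder\'on--Zygmund structure of $\nabla K$ after splitting $\mu$ at the scale $|x-y|$ and bounding the long-range part by its total mass; it plays the role of a pointwise Lipschitz bound on $\bb_{2t}$, which is not available. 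The third step is a logarithmic Gronwall estimate. I would couple $\eta^1,\eta^2$ via a probability measure $\pi$ with matching initial marginal $\bar\rho/\|\bar\rho\|_{L^1}$ and introduce
\begin{equation*}
\Phi_\delta(t)=\int\log\Bigl(1+\tfrac{|X^1(t)-X^2(t)|+|V^1(t)-V^2(t)|}{\delta}\Bigr)\,d\pi.
\end{equation*}
Differentiating and using the local Lipschitz bound on $\bb_1$ for the $X$-component, together with the maximal-function estimate above applied to $\mu=\rho^1_t+\rho^2_t$ for the $V$-component, I would derive
\begin{equation*}
\Phi_\delta'(t) \leq C\bigl(1+\Phi_\delta(t)\bigr) + C\int\bigl[M(\rho^1_t+\rho^2_t)(X^1)+M(\rho^1_t+\rho^2_t)(X^2)\bigr]\,d\pi.
\end{equation*}
Because $(X^i(t))_\#\pi \leq \rho^i_t \in L^\infty(K)$ with compact support, the integral on the right is controlled via the $L^p$-continuity of the maximal operator, uniformly in $\delta$.

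A standard Gronwall argument then gives $\sup_t\Phi_\delta(t)\leq C$ uniformly in $\delta$, and letting $\delta\downarrow 0$ forces the trajectories $(X^1,V^1)$ and $(X^2,V^2)$ to coincide $\pi$-almost everywhere. Choosing $\pi$ to be concentrated on the diagonal of the common initial datum yields $\eta^1=\eta^2$, hence $\rho^1_t=\rho^2_t$ for all $t\in[a,b]$. The main obstacle is precisely the singular nature of $\nabla K$, which prevents any pointwise Lipschitz bound on $\bb_2$ and forces the maximal-function substitute of step two; the separable Hamiltonian structure $\bb=(\bb_1(v),\bb_2(x))$ is then used decisively, since the singular kernel enters the ODE system only through $\dot V^i=\bb_{2t}(X^i)$, coupled to the $X$-variable whose distribution along $\pi$ is dominated by $\rho^i_t$---exactly the density against which $M$ is integrable. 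The compact-support assumption on $\rho^i$ replaces the global growth hypotheses of \cite{bbc-abstract}, because all trajectories supporting $\eta^i$ remain in a fixed compact set.
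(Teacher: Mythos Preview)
Your overall architecture is right (superposition to reduce to a trajectory-level statement, then a logarithmic functional \`a la Crippa--De~Lellis/Bouchut--Crippa), but there is a genuine gap in the quantitative step, and it is exactly the point where the argument differs from the $L^1$-gradient case.

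First, a notational confusion: in your bound on $\Phi_\delta'$ you apply the maximal estimate with $\mu=\rho^1_t+\rho^2_t$, the \emph{solutions} of the continuity equation. But the vector field is $\bb_{2t}=K*\rho_t$ with $\rho_t\in\Measures_+(\R^d)$ the \emph{given} family of measures in the statement, completely unrelated to $\rho^1_t,\rho^2_t$. The maximal function that appears is therefore $\tilde M(DK*\rho_t)$ (or $M\rho_t$ in your formulation), and this object is only in weak-$L^1$: the singular integral $DK*\rho_t$ of a measure is in $M^1$ but in no $L^p$, and the same is true of its (modified) maximal function. Your sentence ``the integral on the right is controlled via the $L^p$-continuity of the maximal operator'' is precisely what fails here; the pushforward bound $(X^i(t))_\#\pi\leq C\Leb{2d}$ lets you test against an $L^\infty$ density, but $\int_{B_R}\tilde M(DK*\rho_t)\,dx$ need not be finite.

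This is not a technicality: once the maximal term is handled correctly (via the $\min$ with $|K*\rho_t|/(\zeta\delta)$ and the interpolation inequality $\|g\|_{L^1}\leq C\,|||g|||_{M^1}\bigl(1+\log(|||g|||_{M^p}/|||g|||_{M^1})\bigr)$ as in \cite[Lemma~2.2]{boucrising}), a factor $\log(1/\delta)$ inevitably appears in the upper bound for $\Phi_\delta'$. Since the lower bound at a putative branching time is also of order $\log(1/\delta)$, a one-parameter functional gives no contradiction. The paper resolves this with the \emph{anisotropic} two-parameter functional
\[
\Phi_{\delta,\zeta}(t)=\iiint\log\Bigl(1+\frac{|\gamma^1-\eta^1|}{\zeta\delta}+\frac{|\gamma^2-\eta^2|}{\delta}\Bigr)\,d\eeta_x(\gamma)\,d\eeta_x(\eta)\,d\bar\rho(x),
\]
which exploits the separable structure $\bb=(\bb_1(v),\bb_2(x))$: the Lipschitz component $\bb_1$ contributes $C/\zeta$, while the singular-integral component $\bb_2$ contributes $C\zeta\bigl(1+\log(C/(\zeta\delta))\bigr)$. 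One then chooses $\zeta$ small first so that the coefficient of $\log(1/\delta)$ in the upper bound is strictly smaller than the one in the lower bound, and only afterwards sends $\delta\to 0$. Your single-parameter $\Phi_\delta$ cannot separate these two scales, and your Gronwall conclusion ``$\sup_t\Phi_\delta(t)\leq C$ uniformly in $\delta$'' does not follow.
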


\begin{Proof} To simplify the notation we give the proof in the case of autonomous vector fields, 
but the same computations work for the general statement.
From now on, we denote by $\Probabilities{X}$ the set of probability measures on a space $X$,
and we use $e_t:C([0,T];\R^k) \to \R^k$ to denote the evaluation map at time $t$, that is $e_t(\eta):=\eta(t)$
(depending on the context, $k$ may be equal to $d$ or $2d$).

It is enough to show that, given $B_R \subset\R^{d}$ and $\eeta\in\Probabilities{C([0,T];B_R \times B_R)}$ 
concentrated on integral curves of $\bb$ and such that $(e_t)_\#\eeta \leq C_0\Leb{2d}$ for all $t\in [0,T]$,  
the disintegration $\eeta_x$ of $\eeta$ with respect to the map $e_0$ is a Dirac delta for $e_{0\#} \eeta$-a.e. $x$.
Indeed, thanks to Theorem \ref{thm:repr-foliat} below, any two bounded compactly supported
nonnegative distributional solutions with the same initial datum $\bar \rho$ can be represented by $\eeta_1,\,\eeta_2 \in \Probabilities{C([0,T];B_R \times B_R)}$. Hence, setting $\eeta = ( \eeta_1+\eeta_2 )/2$, if we can prove that $\eeta_x$ is a Dirac delta for $\bar \rho$-a.e. $x$ we deduce that $(\eeta_1)_x=(\eeta_2)_x=\eeta_x$ for $\bar \rho$-a.e. $x$, thus
$\eeta_1 = \eeta_2$.

\smallskip

To show that $\eeta_x$ is a Dirac delta for $e_{0\#} \eeta$-a.e. $x$, let us consider the function
$$\Phi_{\delta, \zeta}(t) := \iiint \log \Big( 1+ \frac{| \gamma^1(t) - \eta^1(t)|} {\zeta \,\delta} + \frac{| \gamma^2(t) - \eta^2(t)|} {\delta} \Big) d \eeta_{x}( \gamma) d\eeta_{x}(\eta) \, d \bar\rho(x),
$$
where $\delta,\,\zeta \in (0,1)$ are small parameters to be chosen later, 
$t\in [0,T]$, $\bar\rho:=  (e_0)_\# \eeta$, and we use the notation $\gamma(t)=(\gamma^1(t),\gamma^2(t)) \in \R^d \times \R^d$.
It is clear that $\Phi_{\delta,\zeta}(0) =0$.

Let us define the probability measure $\mu \in \Probabilities{\R^d \times C([0,T]; \R^d)^2}$ by $d\mu(x,\eta,\gamma) := d\eeta_x(\eta) \,d\eeta_x(\gamma) \,d \bar \rho (x)$, and
assume by contradiction that $\eeta_x$ is not a Dirac delta for $\bar\rho$-a.e. $x$. This means that there exists a constant $a>0$
such that
$$
\iiint \biggl( \int_0^T \min\bigl\{|\gamma(t)-\eta(t)|,1 \bigr\} \,dt \biggr)\,d\mu(x,\eta,\gamma)\geq a.
$$
By Fubini's Theorem this implies that there exists a time $t_0 \in (0,T]$ such that
$$
\iiint \min\bigl\{|\gamma(t_0)-\eta(t_0)|,1 \bigr\} \,d\mu(x,\eta,\gamma)\geq \frac{a}{T}.
$$
Since the integrand is bounded by $1$ and the measure $\mu$ has mass $1$, this means that the set
$$
A:=\biggl\{(x,\eta,\gamma)\,:\,\min\bigl\{|\gamma(t_0)-\eta(t_0)|,1 \bigr\} \geq \frac{a}{2T}\biggr\}
$$
has $\mu$-measure at least $a/(2T)$. Then, 
assuming without loss of generality that $a\leq 2T$, this implies that $|\gamma(t_0)-\eta(t_0)|\geq a/(2T)$ for all $(x,\eta,\gamma)\in A$,
hence
\begin{equation}
\label{eqn:BCcontr}
\begin{split}
\Phi_{\delta,\zeta}(t_0)&\geq \iiint_A \log \Big( 1+ \frac{| \gamma^1(t_0) - \eta^1(t_0)|} {\zeta \,\delta} + \frac{| \gamma^2(t_0) - \eta^2(t_0)|} {\delta} \Big)\,
d\mu(x,\eta,\gamma)\\
&\geq \frac{a}{2T}\log\Bigl(1+\frac{a}{2\delta T}\Bigr).
\end{split}
\end{equation}
We now want to show that this is impossible.

\smallskip

Computing the time derivative of $\Phi_{\delta,\zeta}$ we see that
\begin{equation}
\label{eqn:phi-deriv}
\frac{d\Phi_{\delta,\zeta}}{dt} (t) \leq \int_{\R^d} \int \int  \bigg( \frac{|\bb_1(\gamma^2(t)) - \bb_1(\eta^2(t))|} {\zeta\bigl(\delta+| \gamma^2(t) - \eta^2(t)|\bigr)} + \frac{\zeta |\bb_2(\gamma^1(t)) - \bb_2(\eta^1(t))|} {\zeta \,\delta+| \gamma^1(t) - \eta^1(t)|}\bigg) \,d\mu(x,\eta,\gamma).
\end{equation}
By our assumption on $\bb_1$, the first summand is easily estimated using the Lipschitz regularity of $\bb_1$ in $B_R$:
\begin{equation}
\label{eqn:b2}
\int_{\R^d} \int \int  \frac{|\bb_1(\gamma^2(t)) - \bb_1(\eta^2(t))|} {\zeta(\delta+| \gamma^2(s) - \eta^2(s)|)} 
\,d\mu(x,\eta,\gamma) \leq \frac{\|\nabla \bb_1\|_{L^\infty(B_R)}}{\zeta}.
\end{equation}
To estimate the second integral we show that for some constant $C$, which depends only on $d$, $\rho(\R^d)$, and $R$,
one has 
\begin{equation}
\label{eqn:termine-rognoso}
\iiint  \frac{\zeta\, |K \ast \rho(\gamma^1(t)) - K \ast \rho(\eta^1(t))|} {\zeta \,\delta+| \gamma^1(t) - \eta^1(t)|} \,
d\mu(x,\eta,\gamma)\leq  C \,\zeta\,   \bigg(1+\log \Bigl(\frac{C}{\zeta \,\delta}\Bigr) \bigg).
\end{equation}
To this end, we first recall the definition of weak $L^p$ norm of a $\mu$-measurable function $f:X\to\R$ in a measure space $(X,\mu)$:
$$ |||f |||_{M^p(X, \mu)} := \sup \bigl\{ \lambda\, \mu(\{ |f|>\lambda\})^{1/p}: \lambda>0\bigr\}. $$
By \cite[Proposition 4.2 and Theorem 3.3(ii)]{boucrising}
there exists a modified maximal operator $\tilde M$, which associates to every function of the form $DK \ast \sigma$, $\sigma \in \Measures_+(\R^d)$, the function $\tilde M (DK \ast \sigma) \in L^1(\R^d)$ with the following properties:
there exists a set $L$ with $\Leb{d}(L)=0$ such that 
\begin{equation}
\label{eqn:maximal-est-analog}
|K \ast \sigma (x)- K \ast \sigma (y)| \leq C\bigl[\tilde M(DK \ast \sigma)(x)+ \tilde M(DK \ast \sigma )(y)\bigr]\, | x-y | \qquad \forall\,
x,\,y\in\R^d\setminus L,
\end{equation}
and the weak-$L^1$ estimate
\begin{equation}
\label{eqn:weakL1}
 ||| \tilde M(DK \ast \rho) |||_{M^1(B_R)} \leq C\,\rho(\R^d)
\end{equation}
holds with a constant $C$ which depends only on $d$ and $R$.
Applying \eqref{eqn:maximal-est-analog}, we see that
\begin{equation}
\label{eqn:terminerognoso1}\iiint  \frac{|K \ast \rho(\gamma^1(t)) - K \ast \rho(\eta^1(t))|} {\zeta \,\delta+| \gamma^1(t) - \eta^1(t)|}\,
d\mu\leq \int g_t(x,\eta,\gamma) \, d\mu,
\end{equation}¥
where
$$
g_t(x,\eta,\gamma) := \min \bigg\{ C\,\tilde M(DK \ast \rho)(\gamma^1(t)) +  C\,\tilde M(DK \ast \rho)(\eta^1(t)) , 
\frac{|K \ast \rho|(\gamma^1(t)) +|K \ast \rho|(\eta^1(t))}{\zeta \,\delta} \bigg\}.$$
Let us fix $p:=\frac{d}{d-1/2}\in \left(1,\frac{d}{d-1}\right)$, so that $|K| \in L^p_{\rm loc}(\R^d)$.
The last term in \eqref{eqn:terminerognoso1} can be estimated thanks to the following interpolation inequality (see \cite[Lemma 2.2]{boucrising})
$$ \| g_t\|_{L^1(\mu)} \leq \frac{p}{p-1} ||| g_t|||_{M^1(\mu)} \bigg( 1+ \log\Big( \frac{||| g_t|||_{M^p(\mu)}}{||| g_t|||_{M^1(\mu)}} \Big) \bigg).
$$
Then, the first term in the right-hand side above can be estimated 
using our assumption  $(e_t)_\#\eeta \leq C_0\Leb{d}$ and \eqref{eqn:weakL1}:
\begin{align*}
||| g_t |||_{M^1(\mu)} &\leq 2\,  ||| \tilde M(DK \ast \rho)(\eta^1(t)) |||_{M^1(\mu)}
\\&= 2 \,  ||| \tilde M(DK \ast \rho)(\eta^1(t)) |||_{M^1(\seeta)}
\\
&= 2\,   ||| \tilde M(DK \ast \rho)(x) |||_{M^1(B_R \times B_R, e_{t\#} \seeta)}
\\
&\leq 2 \,  C_0\,   ||| \tilde M(DK \ast \rho)(x) |||_{M^1(B_R\times B_R, \Leb{2d})}
\\
&\leq 2 \,  C_0\,  \Leb{d}(B_R)\,  ||| \tilde M(DK \ast \rho)(x) |||_{M^1(B_R, \Leb{d})}  
\\&\leq 2\,  C_0\,  C\,   \Leb{d}(B_R)\,  \rho(\R^d).
\end{align*}
Similarly, the second term in the right hand side can be estimated using $(e_t)_\#\eeta \leq C_0\Leb{d}$ and Young's
inequality:
\begin{align*}
||| g_t |||_{M^p(\mu)} &\leq  2\, (\zeta \,\delta)^{-1} \|(K \ast \rho)(\eta^1(t))\|_{L^p(\mu)}\\
&=
2 \,(\zeta \,\delta)^{-1} \|(K \ast \rho)(\eta^1(t))\|_{L^p(\seeta)}
\\
&\leq 2\, C_0\, (\zeta \,\delta)^{-1} \|(K \ast \rho)(x)\|_{L^p(B_R \times B_R)} \\
&\leq 2\, C_0 \,(\zeta \,\delta)^{-1} \Leb{d}(B_R)\,  \|(K \ast \rho)\|_{L^p(B_R)}
\\&\leq 2\,C_0\, (\zeta \,\delta)^{-1} \Leb{d}(B_R)\,\|K\|_{L^p(B_R)} \,  \rho(\R^d)
\\&\leq C\,(\zeta \,\delta)^{-1},
\end{align*}
where $C$ depends on $d$, $R$, and $\rho(\R^d)$. Combining these last estimates with \eqref{eqn:terminerognoso1}, 
we obtain~\eqref{eqn:termine-rognoso}.

Then, using \eqref{eqn:phi-deriv}, \eqref{eqn:b2}, and \eqref{eqn:termine-rognoso}, we deduce that
$$\frac{d\Phi_{\delta,\zeta}}{dt} (t) \leq \frac{C}{\zeta} + C\,  \zeta+ C\,   \zeta\,   \log \Big(\frac{C }{\zeta \,\delta} \Big)$$
for some constant $C$ depending only on $d$, $R$, $\rho(\R^d)$, and $\|\nabla b_1\|_{L^\infty(\R^d)}$. Integrating with respect to time in $[0,t_0]$, we find that
$$\Phi_{\delta,\zeta}(t_0) \leq C\,  t_0\,  \bigg( \frac{1}{\zeta} + \zeta +\zeta\,   \log\Big( \frac{C}{\zeta} \Big) +\zeta\,   \log \Big( \frac{1}{\delta} \Big) \bigg).$$
Choosing first $\zeta>0$ small enough in order to have $C\,   t_0\,  \zeta <a/(2T)$ and then letting 
$\delta \to 0$, we find a contradiction with \eqref{eqn:BCcontr}, which concludes the proof.
\end{Proof}

\subsection{Generalized flows and Maximal Regular Flows}

We denote by $\rncp{d}=\R^d\cup\{\infty\}$ the one-point compactification of $\R^d$ and we recall the definition of generalized flow and
of regular generalized flow in our context, as introduced in \cite[Definition 5.3]{amcofi}.

\begin{definition}[Generalized flow]\label{defn:genflow}
Let $\bb:(0,T)\times \R^d \to \R^d$ be a Borel vector field.
The measure $\eeta \in \Measuresp{C([0,T]; \rncp{d})}$ is said to be a {\em generalized flow} of $\bb$ if $\eeta$ 
is concentrated on the set\footnote{In connection with the definition of generalized flow, let us provide a sketch of proof of the fact that the
set $\Gamma$ in \eqref{defn:gen-flow} is Borel in $C([0,T];\rncp{d})$.

First of all one notices that for
all intervals $[a,b]\subset [0,T]$ the set $\{\eta:\ \eta([a,b])\subset\R^d\}$ is Borel. Then, considering the
absolute continuity of a curve $\eta$ in the integral form
$$
|\eta(t)-\eta(s)|\leq\int_s^t|\bb_r(\eta(r))|\,dr\qquad\forall \,s,t\in [a,b],\,\,s\leq t,
$$
it is sufficient to verify (arguing componentwise and splitting in positive and negative part)
that for any nonnegative Borel function $\cc$ and for any $s,t\in [0,T]$ with $s\leq t$ fixed, the function
$$
\eta\mapsto\int_s^t \cc_r(\eta(r))\,dr
$$
is Borel in $\{\eta:\ \eta([a,b])\subset\R^d\}$. This follows by a monotone class argument, since the property is obviously true
for continuous functions and it is stable under equibounded and monotone convergence.

As soon as the absolute continuity property is secured, also the verification of the
Borel regularity of 
$$
\Gamma\cap\{\eta:\ \eta([a,b])\subset\R^d\}=\bigl\{\eta\in C([0,T]);\rncp{d}): \eta\in AC([a,b];\R^d),\,\,\,
\text{$\dot\eta(t)=\bb_t(\eta(t))$ a.e. in $(a,b)$}\bigr\}
$$
can be achieved following similar lines. Finally, by letting the endpoints $a$, $b$ vary in a countable dense set we obtain that
$\Gamma$ is Borel.}
\begin{multline}
\label{defn:gen-flow}
\Gamma:=\big\{
\eta \in C([0,T]; \rncp{d}) :\ \textit{$\eta\in AC_{\rm loc}(\{\eta\neq\infty\};\R^d)$ and}
\\
\textit{$\dot \eta(t) = \bb_t(\eta(t))$ for a.e. $t \in \{\eta\neq\infty\}$}
\big\}.
\end{multline}
We say that a generalized flow $\eeta$ is {\em regular} if there exists $L_0\geq 0$ satisfying
\begin{equation}\label{eqn:noconcentration-eta-lemma}
(e_t)_\#\eeta \res \R^d \leq L_0\Leb{d} \qquad\forall \,t\in [0,T].
\end{equation}
\end{definition}

In the case of a smooth bounded vector field, a particular class of generalized flows is the one 
generated by transporting the initial measure along the integral lines of the flow:
$$\eeta = \int_{\R^d}\delta_{\sXX(\cdot,x)} \,d[(e_0)_\#\eeta](x).$$

In the next definition we propose a generalization of this construction involving Maximal Regular Flows.

\begin{definition}[Measures transported by the Maximal Regular Flow]\label{defn:transported}
Let $\bb:(0,T)\times \R^d \to \R^d$ be a Borel vector field having a Maximal Regular Flow $\XX$ and let
$\eeta \in \Measuresp{C([0,T]; \rncp{d})}$ with $(e_t)_\#\eeta\ll\Leb{d}$ for all $t\in [0,T]$. We say that
$\eeta$ is transported by $\XX$ if, for all $s\in [0,T]$, $\eeta$ is concentrated on
\begin{equation}
\label{defn:loc-int-fol}
\big\{
\eta \in C([0,T]; \rncp{d}) : \eta(s) = \infty \mbox{ or }\eta(\cdot) = \XX(\cdot, s,\eta(s)) \mbox{ in } (T^-_{s,\sXX}(\eta(s)) , T^+_{s,\sXX}(\eta(s)) )
\big\}.
\end{equation}
\end{definition}

The absolute continuity assumption $(e_t)_\#\eeta\ll\Leb{d}$ on the marginals of $\eeta$ is needed to ensure that this notion is invariant
with respect to the uniqueness property in \eqref{eq:understanding_uniqueness}. In other words, if $\XX$ and $\YY$ are related as in \eqref{eq:understanding_uniqueness} then
$\eeta$ is transported by $\XX$ if and only if $\eeta$ is transported by $\YY$.

It is easily seen that if $\eeta$ is transported by a Maximal Regular Flow, then $\eeta$ is a generalized flow according
to Definition~\ref{defn:genflow}, but in connection with the proof of the renormalization property we are more interested to the converse statement. As shown in the next theorem, this
holds for regular generalized flows and for divergence-free vector fields satisfying {\bf (A1)}-{\bf (A2)} of Section~\ref{sec:maxflow}.

\begin{theorem}[Regular generalized flows are transported by $\XX$]\label{thm:no-split-until-explosion}
 Let $\bb: (0,T) \times \R^d \to \R^d$ be a divergence-free vector field  satisfying {\bf (A1)}-{\bf (A2)} of Section~\ref{sec:maxflow} and let
 $\XX$ be its Maximal Regular Flow.
Let $\eeta \in \Measuresp{C([0,T];\rncp{d})}$ be a regular generalized flow according to Definition~\ref{defn:genflow}.\\
Given $s\in [0,T]$, consider a Borel family $\{\eeta^s_x\}\subset\Probabilities{C([0,T];\rncp{d})}$, $x\in \rncp{d}$,
of conditional probability measures representing $\eeta$ with respect to the marginal $(e_s)_\# \eeta$, that is, $\int \eeta^s_x\,d[(e_s)_\# \eeta](x)=\eeta$. 
Then for $(e_s)_\# \eeta$-almost every $x \in \R^d$
we have that $\eeta^s_x$ is concentrated on the set
\begin{equation}
\label{defn:loc-int-fol-s}
\hat \Gamma_s:=\big\{
\eta \in C([0,T]; \rncp{d}) \,:\, \eta(s)=x,\,\,\eta(\cdot) = \XX(\cdot, s,\eta(s)) \mbox{ in } (T^-_{s,\sXX}(\eta(s)) , T^+_{s,\sXX}(\eta(s)) )
\big\}.
\end{equation}
In particular $\eeta$ is transported by $\XX$.
\end{theorem}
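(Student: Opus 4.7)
The plan is to exploit assumption \textbf{(A2)}, applied to appropriate time-marginals of $\eeta$ and of the Maximal Regular Flow $\XX$, to identify the disintegration $\eeta^s_x$ with a Dirac mass on the MRF trajectory for $\bar\rho$-a.e.\ $x$, where $\bar\rho$ is the density of $(e_s)_\#\eeta\res\R^d$. By the time-symmetry afforded by $\div\bb=0$, I would work in the forward direction from $s$; the backward part and the concluding assertion that $\eeta$ is transported by $\XX$ follow analogously and by integration over $x$.

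Fix $\tau\in(s,T]$ and $R>0$. Curves remaining in $\bar B_R$ throughout $[s,\tau]$ form a closed (hence Borel) subset
$$
\Gamma^{s,\tau}_R := \{\eta \in C([0,T];\rncp{d}) : |\eta(r)|\leq R\ \forall r\in[s,\tau]\},
$$
and by Definition~\ref{defn:genflow} such curves in $\supp\eeta$ are integral curves of $\bb$ on $[s,\tau]$. For nonnegative $\phi\in C_c(\R^d)$, the family
$$
\sigma_r := (e_r)_\#\bigl(\phi(e_s)\,\mathbf{1}_{\Gamma^{s,\tau}_R}\eeta\bigr),\qquad r\in[s,\tau],
$$
is a weakly$^*$ continuous, bounded (density $\leq L_0\|\phi\|_\infty$), compactly supported (in $\bar B_R$) solution of the continuity equation on $(s,\tau)\times\R^d$. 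Setting
$$
A_{R,\tau} := \{x\in\R^d : T^+_{s,\sXX}(x)>\tau,\ |\XX(r,s,x)|\leq R\ \forall r\in[s,\tau]\},
$$
Theorem~\ref{thm:maximalflow}(i) ensures that $\hat\sigma_r := \XX(r,s,\cdot)_\#(\phi\,\bar\rho\,\mathbf{1}_{A_{R,\tau}}\Leb{d})$ is a solution of the same type on $(s,\tau)\times\R^d$. Computing $\sigma_s$ via the disintegration of $\eeta$ along $(e_s)_\#\eeta$ gives density $\phi(x)\bar\rho(x)\eeta^s_x(\Gamma^{s,\tau}_R)$, whereas $\hat\sigma_s$ has density $\phi(x)\bar\rho(x)\mathbf{1}_{A_{R,\tau}}(x)$; since MRF trajectories issued from $A_{R,\tau}$ lie in $\Gamma^{s,\tau}_R$, I expect $\sigma_s \geq \hat\sigma_s$.

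The key point is to show that in fact $\sigma_s=\hat\sigma_s$, which by \textbf{(A2)} forces $\sigma_r=\hat\sigma_r$ on $[s,\tau]$ and hence pins down $\eeta^s_x(\Gamma^{s,\tau}_R)=\mathbf{1}_{A_{R,\tau}}(x)$ for $\bar\rho$-a.e.\ $x$. To this end, I rerun the construction with the weight $\phi(e_s)$ replaced by $\phi(e_s)\prod_i\psi_i(e_{r_i})$ for bounded Borel $\psi_i$ on $\R^d$ and $r_i\in(s,\tau]$; each such reweighted measure still produces a bounded, compactly supported, weakly$^*$ continuous solution of the continuity equation on an appropriate sub-interval, and \textbf{(A2)} forces it to agree with the corresponding MRF-weighted push-forward. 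A monotone class argument over a countable dense family of such weights then identifies $\eeta^s_x\res\Gamma^{s,\tau}_R$ with $\delta_{\sXX(\cdot,s,x)}$ for $\bar\rho$-a.e.\ $x\in A_{R,\tau}$; exhausting $R\to\infty$ and $\tau\uparrow T^+_{s,\sXX}(x)$ along countable sequences yields the conclusion on the full forward maximal interval.

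The main obstacle is precisely this upgrade step: the restriction $\mathbf{1}_{\Gamma^{s,\tau}_R}$ depends on the whole curve while the MRF restriction $\mathbf{1}_{A_{R,\tau}}$ depends only on $\eta(s)$, so one must show that for $\eeta$-a.e.\ curve the two restrictions coincide—essentially the non-splitting property that the theorem itself asserts. A clean way to break this apparent circularity is a bootstrap in $\tau$: establish the statement first on a very short slab $[s,s+\delta]$ where bounded curves in $\supp\eeta$ can be matched to MRF trajectories pointwise through \textbf{(A2)}, and then iterate via the semigroup property of the MRF (Theorem~\ref{thm:maximalflow}(iii)) and a concatenation argument up to the blow-up time $T^+_{s,\sXX}$.
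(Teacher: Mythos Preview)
You correctly identify the central circularity, but neither of your proposed remedies resolves it. The inequality $\sigma_s\geq\hat\sigma_s$ you ``expect'' already presupposes the conclusion: your $\sigma_s$ has density $\phi\,\bar\rho\,\eeta^s_x(\Gamma^{s,\tau}_R)$ while $\hat\sigma_s$ has density $\phi\,\bar\rho\,\mathbf{1}_{A_{R,\tau}}$, and $\sigma_s\geq\hat\sigma_s$ would force $\eeta^s_x(\Gamma^{s,\tau}_R)=1$ for every $x\in A_{R,\tau}$, i.e.\ that \emph{every} $\eeta^s_x$-curve (not just the MRF trajectory) stays in $\bar B_R$ on $[s,\tau]$. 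The monotone-class induction on weights $\prod_i\psi_i(e_{r_i})$ needs a base case at time $s$, which is again the unproved identity $\sigma_s=\hat\sigma_s$; each inductive step matches initial data only if the previous step already did. And the short-slab bootstrap fails because with $\bb$ merely in $L^1_{\rm loc}$ there is no $\delta>0$ depending only on $\eta(s)$ that keeps $\eeta^s_x$-curves inside $B_R$; the obstruction is present on arbitrarily short intervals, so nothing is gained by shrinking $\tau-s$.

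The paper breaks the circularity by decoupling the argument into two steps that do not reference each other. First, the compactly supported regular generalized flow
\[
\eeta^{R,r}:=\Sigma^{s,r}_\#\bigl(\eeta\res\{\eta:\eta([s,r])\subset B_R\}\bigr)
\]
is shown to have deterministic disintegration along $e_s$, namely $\eeta^{R,r}=\int\delta_{\sYY(\cdot,x)}\,d[(e_s)_\#\eeta^{R,r}](x)$ for suitable integral curves $\YY(\cdot,x)$. This is imported from \cite[Theorem~3.4]{amcofi}, whose proof uses \textbf{(A2)} \emph{intrinsically}, via the classical splitting argument: if the conditional measures were not Dirac on a set of positive measure, a measurable selection would manufacture two distinct bounded compactly supported solutions of the continuity equation with the same initial datum. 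No comparison with $\XX$ is made at this stage. Second, since $\YY$ is then a Regular Flow on each superlevel set $\{\rho_{R,r}>\delta\}$ (the compressibility bound being supplied by \eqref{eqn:noconcentration-eta-lemma}), Theorem~\ref{thm:maximalflow}(ii) identifies $\YY=\XX(\cdot,s,\cdot)$ on $[s,r]$. Letting $\delta\downarrow0$, then $R\uparrow\infty$, and finally arguing by contradiction over rational $r\in(s,T^+_{s,\sXX}(x))$ yields the forward statement. The idea missing from your outline is precisely the first step: establish determinism of $\eeta^{R,r}$ by playing it against itself, not against $\XX$.
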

\begin{Proof} First of all we notice that the set $\hat \Gamma_s$ in \eqref{defn:loc-int-fol-s} is
Borel. Indeed, the maps $\eta\mapsto T_{s,\sXX}^\pm(\eta(s))$ are Borel because $T_{\sXX}^\pm$
are Borel in $\R^d$, and the map $\eta\mapsto\XX(t,s,\eta(s))$ is Borel as well for any $t\in [0,T]$. Therefore, choosing
a countable dense set of times $t \in [0,T]$ the Borel regularity of $\hat \Gamma_s$ is achieved.

\smallskip

The fact that $\eeta^s_x$ is concentrated on the set  $\{\eta\,:\, \eta(s)= x\}$ is immediate from the definition of $\eeta^s_x$.
We now show that  for $(e_s)_\# \eeta$-almost every $x\in\R^d$ the measure
$\eeta^s_x$ is concentrated on the  set
\begin{equation}
\label{defn:loc-int-fol-s-unilat}
\big\{
\eta \in C([0,T]; \rncp{d}) : \eta(\cdot) = \XX(\cdot, s,x) \mbox{ in } [s , T^+_{s,\sXX}(x) )
\big\}.
\end{equation}
Applying the same result backward in time, this will prove that $\eeta_x^s$ is concentrated on the set $\hat \Gamma_s$ in \eqref{defn:loc-int-fol-s}.

\smallskip

For $r\in (s,T]$ we  denote by $\Sigma^{s,r}:C([0,T]; \rncp{d})\to C([s,r];\rncp{d})$ the map induced by restriction to $[s,r]$, 
namely $\Sigma^{s,r}(\eta):=\eta\vert_{[s,r]}$.

For every $R>0$, $r\in (s,T]$, let us consider 
$$\eeta^{R,r} := \Sigma^{s,r}_\# \Big( \eeta \res \bigl\{ \eta: \eta(t) \in B_R \mbox{ for every } t\in [s,r]\bigr\} \Big).$$
By construction $\eeta^{R,r}$ is a regular generalized flow relative to $\bb$ with compact support, hence our regularity assumption on $\bb$
allows us to apply \cite[Theorem 3.4]{amcofi} to deduce that 
\begin{equation}
\label{eqn:repr-eta-R-r}
\eeta^{R,r}= \int \delta_{\sYY(\cdot, x)}\,d[(e_s)_\# \eeta^{R,r}](x),
\end{equation}
where $\YY(\cdot,x)$ is an integral curve of $\bb$ in $[s,r]$ for $(e_s)_\# \eeta$-a.e. $x\in \R^d$. Let us denote by $\rho_{R,r}$ the density of $(e_s)_\# \eeta^{R,r}$ with respect to $\Leb{d}$, which is bounded by $L_0$ thanks to \eqref{eqn:noconcentration-eta-lemma}. For every $\delta>0$ we have that
\begin{equation}
\label{eqn:incompr-est1}
\begin{split}
\YY(t,\cdot)_\# \big( \Leb{d} \res \{\rho_{R,r}>\delta\}\big)
&= (e_t)_\# \int_{\{\rho_{R,r}>\delta\}}\!\!\!\!\! \delta_{\sYY(\cdot,x)}\,d\Leb{d}(x)
\\
&\leq \frac{1}{\delta}\,(e_t)_\# \int_{\{\rho_{R,r}>\delta\}}\!\!\!\!\! \delta_{\sYY(\cdot,x)}\,d[(e_s)_\# \eeta^{R,r}](x)
\\
&\leq \frac{1}{\delta}\,
(e_t)_\#\eeta^{R,r}
\leq \frac{1}{\delta}\,
(e_t)_\#\eeta \res \R^d \leq  \frac{L_0}{\delta} \,\Leb{d},
\end{split}
\end{equation}
hence $\YY(\cdot, x)$ is a Regular Flow of $\bb$ in $[s,r] \times \{\rho_{R,r}>\delta\}$ according to Definition~\ref{def:regflow}.
By Theorem~\ref{thm:maximalflow}(ii) we deduce that $\YY(\cdot, x) = \XX(\cdot, s,x)$ for a.e. $x\in \{ \rho_{R,s}>\delta\}$ and therefore, letting $\delta \to 0$,
\begin{equation}
\label{eqn:coinc-flow}
\YY(\cdot, x) = \XX(\cdot, s,x) \qquad \text{in $[s,r]$ for $(e_s)_\# \eeta^{R,s}$-a.e. $x\in \R^d$}.
\end{equation}
Letting $R\to \infty$ we have that $\eeta^{R,r} \to \ssigma^r$ increasingly, where
$$\ssigma^r := \Sigma^{s,r}_\# \Big( \eeta \res \{ \eta: \eta(t) \neq \infty \mbox{ for every } t\in [s,r]\} \Big),$$
and by \eqref{eqn:repr-eta-R-r} and \eqref{eqn:coinc-flow} we get that
\begin{equation}
\label{eqn:eeta-r-delta}
\ssigma^r= \int \delta_{\sXX(\cdot,s,x)}\,d[(e_s)_\#\ssigma^r](x)\qquad \forall\,r \in (s,T].
\end{equation}
Now, arguing by contradiction, let us assume that there exists a Borel set $E \subset \R^d$ such that $(e_s)_\# \eeta(E)>0$ and 
$\eeta^s_x$ is not concentrated on the  set \eqref{defn:loc-int-fol-s-unilat} for every $x\in E$, namely
$$ \eeta^s_x\Big(\big\{
\eta \in C([0,T]; \rncp{d}) : \eta \neq \XX(\cdot, s,x) \mbox{ as a curve in } [s , T^+_{s,\sXX}(x) )
\big\} \Big) > 0.
$$
Since this is equivalent to
 \begin{equation*}
 \eeta^s_x \bigg( \bigcup_{r\in \Q \cap (s,T^+_{s,\sXX}(x))} \bigl\{ \eta \in C([0,T]; \rncp{d}) : \eta \neq \XX(\cdot, s,x)\mbox{ in } [s ,r],\, \eta([s,r])\subset\R^d\bigr\} \bigg)>0,
 \end{equation*}
 we deduce that
for every $x\in E$ there exists $r_x\in  \Q \cap (s,T^+_{s,\sXX}(x))$ such that 
$$ 
\eeta^s_x\Big(\bigl\{ \eta \in C([0,T]; \rncp{d}) : \eta \neq \XX(\cdot, s,x) \mbox{ as a curve in } [s ,r_x], \, \eta([s,r_x])\subset\R^d\bigr\} \Big) >0.
$$
In other words, for every $x\in E$ there exists a rational number $r_x$ such that 
$$\Sigma^{s,r_x}_\# \Big( \eeta^s_x \res \{ \eta: \eta(t) \neq \infty \mbox{ for every } t\in [s,r_x]\} \Big) \mbox{ is nonzero and not a multiple of }\delta_{\sXX(\cdot,s,x)}.$$ 
Therefore, there exist a Borel set $E' \subset E$ of positive $(e_s)_\#\eeta$-measure and $r\in (s,T] \cap \Q$ such that for every $x\in E'$ 
$$\Sigma^{s,r}_\# \Big( \eeta^s_x \res \{ \eta: \eta(t) \neq \infty \mbox{ for every } t\in [s,r]\}  \Big)
\mbox{ is nonzero and not a multiple of }
 \delta_{\sXX(\cdot,s,x)}.$$
Notice now that, by \eqref{eqn:eeta-r-delta} and $(e_s)_\# \ssigma^r \leq (e_s)_\# \eeta$, it follows that
$$\int \delta_{\sXX(\cdot,s,x)}\,d[(e_s)_\# \eeta](x)\geq\ssigma^r=\int
\Sigma^{s,r}_\#\Big( \eeta^s_x \res \{ \eta: \eta(t) \neq \infty \mbox{ for every } t\in [s,r]\} \Big) \, d[(e_s)_\# \eeta](x),
$$
hence $ \delta_{\sXX(\cdot,s,x)}\geq \Sigma^{s,r}_\#\Big( \eeta^s_x \res \{ \eta: \eta(t) \neq \infty \mbox{ for every } t\in [s,r]\} \Big)$
for $(e_s)_\#\eeta$-a.e. $x$, and therefore a contradiction with the existence of $E'$. This proves 
that $\eeta^s_x$ is concentrated on the  set defined in \eqref{defn:loc-int-fol-s-unilat}, as desired.

\smallskip

Finally, in order to prove that $\eeta$ is transported by $\XX$ we apply the definition of disintegration and the fact that for $(e_s)_\# \eeta$-a.e. $x\in \R^d$ 
the measure $\eeta^s_x$ is concentrated on the set $\hat \Gamma_s$ in \eqref{defn:loc-int-fol-s} to obtain that
$\eeta (\hat \Gamma) =\int \eeta^s_x(\hat \Gamma)\,d[(e_s)_\# \eeta](x)=1$, where $\hat \Gamma$ is the set in \eqref{defn:loc-int-fol}.
\end{Proof}

\subsection{Regular generalized flows and renormalized solutions}
\label{sect:flow renormalized}

We now recall the well-known concept of renormalized solution to a continuity equation.
This was already introduced in Section \ref{sect:statement} in the context of the Vlasov-Poisson system, but we prefer to reintroduce it here in its
general formulation for the convenience of the reader. To fix the ideas we consider
the interval $(0,T)$ and $0$ as initial time, but the definition can be immediately adapted to general intervals, forward and
backward in time.

\begin{definition}[Renormalized solutions] \label{def:renormalized}  Let $\bb\in L^1_{\rm loc} ((0,T) \times \R^d; \R^d)$ be a 
Borel and divergence-free vector field. A Borel function $\rho:(0,T)\times \R^d \to \R$ is a renormalized solution of the continuity 
equation relative to $\bb$ if 
 \begin{equation}
 \label{defn:renorm-sol-eq}
 \partial_t \beta (\rho) + \nabla \cdot (\bb\beta(\rho)) =0\quad\text{in $(0,T)\times\R^d$}\qquad\forall \,\beta \in C^1\cap L^\infty(\R)
 \end{equation} 
 in the sense of distributions. Analogously, we say that $\rho$ is a renormalized solutions starting from a Borel
 function $\rho_0: \R^d \to \R$ if
  \begin{equation}
 \label{defn:renorm-sol-tested-st}
 \begin{split}
&\int_{\R^d} \phi_0(x)\, \beta (\rho_0(x))\, dx +
 \int_0^T \! \int_{\R^d} [\partial_t \phi_t(x) + \nabla\phi_t(x) \cdot\bb_t(x) ]\,\beta(\rho_t(x)) \, dx \, dt  =0
\end{split}
\end{equation}
for all  $\phi \in C^{\infty}_c ( [0,T) \times \R^d)$ and all $\beta\in C^1\cap L^\infty(\R)$.
 \end{definition}

\begin{remark}[Equivalent formulations]\label{rmk:renorm-operativa}{\rm
As shown for instance in \cite[Section 8.1]{amgisa}), an equivalent formulation of \eqref{defn:renorm-sol-tested-st} is the following:
for every $\varphi \in C^{\infty}_c ( \R^d)$ the function $\int_{\R^d} \varphi(x)\, \beta (\rho_t(x))\, dx$ 
coincides a.e. with an absolutely continuous function $t\mapsto A(t)$ such that
$A(0)=\int_{\R^d} \varphi(x) \,\beta (\rho_0(x))\, dx $ and 
 \begin{equation}
 \label{defn:renorm-sol-tested}
 \begin{split}
 \frac{d}{dt}A(t)
 &= \int_{\R^d} \nabla\varphi(x) \cdot\bb_t(x)\,\beta(\rho_t(x)) \, dx\qquad\text{for a.e. $t\in (0,T)$.}
\end{split}
\end{equation}
Moreover, by an easy approximation argument, the same holds for every Lipschitz, compactly supported $\varphi:\R^d\to \R$. This way, possibly
splitting $\varphi$ in positive and negative parts, only nonnegative test functions need to be considered. Analogously, 
by writing every $\beta \in C^1(\R^d)$ as the sum of a $C^1$ monotone nondecreasing function and of a $C^1$ monotone nonincreasing function, we can
use the linearity of the equation with respect to $\beta(\rho_t)$ to reduce to the case of $\beta \in C^1\cap L^\infty(\R)$ 
monotone nondecreasing.
}\end{remark}

In the next theorem we show first that, flowing an initial datum $\rho_0 \in L^1(\R^d)$ through the maximal flow, we obtain a renormalized solution 
of the continuity equation. In turn, this is a key tool to prove the second part of the lemma, namely that any 
measure $\eeta$ transported by the maximal regular flow flow induces, through its marginals, a renormalized solution.
The proof of these facts heavily relies on the incompressibility of the flow and therefore on the assumption that the vector field is divergence-free.
A generalization of this lemma to the case of vector fields with bounded divergence is possible, but rather technical and long. We notice that the assumptions {\bf (A1)} and {\bf (A2)}, as well as the one on the divergence of the vector field $\bb$, are used only for the existence and uniqueness of a maximal regular flow which preserves the Lebesgue measure on its domain of definition (see Theorem~\ref{thm:maximalflow}).

To fix the ideas, in part (i) of the theorem below we consider only $0$ as initial time. An analogous statement can be given for
any other initial time $s\in [0,T]$, considering intervals $[0,s]$ or $[s,T]$, with no additional assumption on $\bb$.

\begin{theorem}\label{thm:distr-on-integral-curves}
 Let $\bb: (0,T) \times \R^d \to \R^d$ be a divergence-free vector field satisfying {\bf (A1)}-{\bf (A2)} of Section~\ref{sec:maxflow}.
 Let $\XX(t,s,x)$ be the maximal regular flow of $\bb$ according to Definition~\ref{def:maxflow}.
\begin{enumerate}
\item If $\rho_0 \in L^1(\R^d)$, we define $\rho_t\in L^1(\R^d)$ by
$$\rho_t := \XX(t ,0,\cdot)_\# (\rho_0 \res \{T^+_{0,\sXX}>t\}), \qquad t \in [0,T).$$ 
Then $\rho_t$ is a renormalized solution of the continuity equation
starting from $\rho_0$. In addition the map $t\mapsto \rho_t$ is strongly continuous on $[0,T)$ with respect to the
$L^1_{\rm loc}$ convergence, and it is also strongly $L^1$ continuous from the right.
\item
If $\eeta \in \Measuresp{C([0,T];\rncp{d})}$ is transported by $\XX$, 
and $(e_t)_\#\eeta \res \R^d\ll \Leb{d}$ for every $t\in [0,T]$, then the density $\rho_t$ of  $(e_t)_\#\eeta \res \R^d$ with 
respect to $\Leb{d}$ is strongly continuous on $[0,T)$ with respect to the
$L^1_{\rm loc}$ convergence and it is a renormalized solution of the continuity equation.
\end{enumerate}
\end{theorem}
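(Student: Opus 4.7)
My plan is to prove Part 1 directly by building a superposition measure $\eeta$ out of $\rho_0$, and then to derive Part 2 by disintegrating a general transported $\eeta$ along the entry time into $\R^d$. By linearity of the continuity equation in $\rho$ I first split $\rho_0=\rho_0^+-\rho_0^-$ and assume $\rho_0\ge 0$. Fix $\beta\in C^1\cap L^\infty(\R)$; since the constant $\beta(0)$ trivially satisfies the equation thanks to $\div\bb=0$, I may further assume $\beta(0)=0$. Setting $u_0:=\beta(\rho_0)\in L^1\cap L^\infty$, the incompressibility of $\XX(t,0,\cdot)$ on $\{T^+_{0,\sXX}>t\}$ (Theorem~\ref{thm:maximalflow}(i)) yields
$$u_t:=\XX(t,0,\cdot)_\#\bigl(u_0\res\{T^+_{0,\sXX}>t\}\bigr)=\beta(\rho_t) \quad \text{a.e.},$$
with $\|u_t\|_\infty\le\|\beta\|_\infty$, so it suffices to show that $u_t$ solves the continuity equation distributionally with initial datum $u_0$. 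To this end I introduce the regular generalized flow
$$\eeta:=\int_{\R^d}\delta_{\tilde\XX(\cdot,0,x)}\,u_0(x)\,dx\in\Measuresp{C([0,T];\rncp{d})},$$
where $\tilde\XX(\cdot,0,x)$ extends $\XX(\cdot,0,x)$ by $\infty$ on $[T^+_{0,\sXX}(x),T]$. For $\phi\in C_c^1([0,T)\times\R^d)$, extended by $0$ at $\infty$, and for $\eeta$-a.e. curve $\eta(\cdot)=\tilde\XX(\cdot,0,x)$, the boundary value $\phi(T^+_{0,\sXX}(x)\wedge T,\eta(T^+_{0,\sXX}(x)\wedge T))$ vanishes either by compact $t$-support of $\phi$ (if $T^+_{0,\sXX}(x)=T$) or by compact $x$-support combined with $|\eta(t)|\to\infty$ via Definition~\ref{def:maxflow}(iii) (if $T^+_{0,\sXX}(x)<T$). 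The fundamental theorem of calculus along $\eta$ then gives
$$-\phi(0,x)=\int_0^T\bigl[\partial_t\phi+\bb_\tau\cdot\nabla\phi\bigr](\tau,\eta(\tau))\,1_{\{\eta(\tau)\in\R^d\}}\,d\tau,$$
and integrating against $u_0(x)\,dx$ together with Fubini and the identity $(e_\tau)_\#\eeta\res\R^d=u_\tau\Leb{d}$ yields precisely the distributional formulation~\eqref{defn:renorm-sol-tested-st}.

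\textbf{Continuity for Part 1.} Weak-$*$ continuity of $t\mapsto\rho_t$ against $C_c(\R^d)$ follows from $\int\varphi\rho_t\,dy=\int\varphi(\tilde\XX(t,0,x))\rho_0(x)\,dx$ by dominated convergence. For strong $L^1_{\rm loc}$ continuity I truncate $\rho_0^M:=\rho_0 1_{\{\rho_0\le M\}}$, using the uniform bound $\|\rho_t-\rho_t^M\|_{L^1}\le\|\rho_0-\rho_0^M\|_{L^1}$ (again from incompressibility) to reduce to bounded initial data; for $\rho^M$ the incompressibility identity $\|\rho_t^M\|_{L^2(B_R)}^2=\int(\rho_0^M)^2\,1_{\{T^+_{0,\sXX}>t\}}\,1_{B_R}(\XX(t,0,x))\,dx$ combined with dominated convergence on the $x$-space produces $L^2(B_R)$-norm continuity, and paired with weak $L^2(B_R)$ continuity this delivers strong $L^2(B_R)$, hence $L^1(B_R)$, continuity. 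For strong $L^1(\R^d)$ right-continuity I use that $\{T^+_{0,\sXX}>t_n\}\uparrow\{T^+_{0,\sXX}>t\}$ as $t_n\downarrow t$ gives $\|\rho_{t_n}\|_{L^1}\to\|\rho_t\|_{L^1}$ by monotone convergence; combined with weak $L^1$ convergence and nonnegativity this upgrades to strong $L^1$ convergence.

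\textbf{Part 2 and the main obstacle.} For Part 2 the same superposition scheme applies once one accounts for curves of $\eeta$ that enter $\R^d$ from infinity after time $0$. I plan to split $\eeta=\eeta_\R+\eeta_\infty$ according to whether $\eta(0)\in\R^d$ or $\eta(0)=\infty$: Part 1's construction applies directly to $\eeta_\R$, whose marginal at time $0$ is $\rho_0\Leb{d}$ with $\rho_0$ the density of $(e_0)_\#\eeta\res\R^d$. For $\eeta_\infty$ I disintegrate by the Borel entry time $\tau(\eta):=\inf\{t:\eta(t)\in\R^d\}$; by the transported property (Definition~\ref{defn:transported}) and the backward blow-up dichotomy of Definition~\ref{def:maxflow}(iii), each such curve satisfies $T^-_{s,\sXX}(\eta(s))=\tau(\eta)$ and $\eta(\cdot)=\XX(\cdot,s,\eta(s))$ on its interval of existence for any regular value $s>\tau(\eta)$, so the argument of the second paragraph can be applied with initial time shifted to (a small perturbation of) $\tau(\eta)$. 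The hard part will be implementing this decomposition cleanly: verifying Borel measurability of $\tau$, disintegrating $\eeta_\infty$ through $\tau$ together with a reference value $\eta(s_\tau)\in\R^d$ just above the entry time, and summing the resulting family of renormalization identities against test functions $\phi\in C_c^1((0,T)\times\R^d)$ — here the absence of a $t=0$ boundary term in $\phi$ is what allows the pieces from different entry times to fit together into a single distributional PDE for $\beta(\rho_t)$ on $(0,T)\times\R^d$. Continuity in Part 2 then follows by applying the truncation and $L^2$-norm continuity arguments of Part 1 slice-by-slice.
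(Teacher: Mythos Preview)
Your treatment of Part~(i) is essentially the same as the paper's, just phrased in superposition language: the paper also differentiates $t\mapsto\int_{\{T_{\sXX}>t\}}\varphi(\XX(t,\cdot))\beta(\rho_0)\,dx$ directly, using the blow-up property to kill the boundary term at $T^+_{0,\sXX}$. Your continuity argument via truncation and $L^2(B_R)$-norm convergence is different from the paper's $C_c$-approximation argument, but it is sound.

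The gap is in Part~(ii). Your plan is to split $\eeta=\eeta_\R+\eeta_\infty$ and disintegrate by entry time, but this decomposition cannot deliver the renormalized equation for $\beta(\rho_t)$, because $\beta$ is nonlinear. Concretely: the marginal of $\eeta_\R$ at time $t$ is \emph{not} the pushforward $\XX(t,0,\cdot)_\#(\rho_0\res\{T^+_{0,\sXX}>t\})$ from Part~(i), since curves in $\eeta_\R$ may blow up and return from infinity before time $t$; and even if you decompose $\rho_t=\sum_\alpha \rho_t^{(\alpha)}$ along entry-time slices, applying Part~(i) to each slice yields that each $\beta(\rho_t^{(\alpha)})$ solves the linear continuity equation, which says nothing about $\beta\bigl(\sum_\alpha\rho_t^{(\alpha)}\bigr)$. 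The ``summing the resulting family of renormalization identities'' step therefore does not close.

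The paper's route around this obstacle is genuinely different and worth knowing. One first reduces (by linearity in $\beta(\rho)$) to $\beta$ monotone nondecreasing and $\varphi\ge 0$. For each $t$ one introduces $\tilde\rho^t_r:=\XX(r,t,\cdot)_\#\bigl(\rho_t\res\{T^-_{t,\sXX}<r<T^+_{t,\sXX}\}\bigr)$, which by Part~(i) is a renormalized solution with $\tilde\rho^t_t=\rho_t$. The transported hypothesis on $\eeta$ gives the key comparison $\tilde\rho^t_r\le\rho_r$ for all $r$, so monotonicity yields
\[
\int[\beta(\rho_t)-\beta(\rho_s)]\varphi\,dx\ \le\ \int[\beta(\tilde\rho^t_t)-\beta(\tilde\rho^t_s)]\varphi\,dx
=\int_s^t\!\!\int\beta(\tilde\rho^t_r)\,\nabla\varphi\cdot\bb_r\,dx\,dr,
\]
with the reverse inequality obtained from $\tilde\rho^s$. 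This sandwich gives absolute continuity of $t\mapsto\int\beta(\rho_t)\varphi$, and letting $s\to t$ together with the strong continuity of $r\mapsto\tilde\rho^t_r$ at $r=t$ from Part~(i) identifies the derivative as $\int\beta(\rho_t)\nabla\varphi\cdot\bb_t$. Thus the nonlinearity is handled not by decomposing $\eeta$, but by comparing $\rho_r$ with a one-parameter family of Part~(i)-type solutions anchored at each time $t$.
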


\begin{Proof} We split the proof in four steps.

\noindent {\bf Step 1: proof of (i), renormalization property of $\rho_t$.}
In the proof of (i) we set for simplicity $\XX(t,x)=\XX(t,0,x)$ and
 $T_{0,\sXX}^+=T_{\sXX}$. We first notice that, by the incompressibility of the flow \eqref{eqn:incompr-R} and by the definition of $\rho_t$, 
for every $t\in [0,T)$ and $\varphi \in C_c(\R^d)$ one has
$$\int_{\{T_{\sXX}>t\}}\!\!\! \varphi(\XX(t,x))\, \rho_t(\XX(t,x)) \, dx = 
\int_{\sXX(t,\cdot)(\{T_{\sXX}>t\})}\!\!\!\!\! \varphi\, \rho_t\, dx
=\int_{\{T_{\sXX}>t\}} \!\!\!\varphi(\XX(t,x)) \,\rho_0\,dx,
$$
hence, for any $t\in [0,T)$,
\begin{equation}
\label{eqn:rho-t-formula}
\rho_t(\XX(t,x)) = \rho_0(x) \qquad \text{for $\Leb{d}$-a.e. $x\in \{T_{\sXX}>t\}$}.
\end{equation}
Let $\beta \in C^1\cap L^\infty(\R)$. Using again \eqref{eqn:incompr-R} and by \eqref{eqn:rho-t-formula}
we have that
\begin{equation}\label{eqn:rho-t-formula-bis}
\int_{\R^d} \varphi \,\beta (\rho_t)\, dx = \int_{\sXX(t,\cdot)(\{T_{\sXX}>t\})} \!\!\!\!\!\!\!\varphi\, \beta (\rho_t)\, dx = \int_{\{T_{\sXX}>t\}}\!\!\!\!\!\! \varphi(\XX(t,\cdot))\, \beta (\rho_0)\, dx  
\end{equation}
for any $\varphi\in C_c(\R^d)$. In addition, the blow-up property \eqref{eq:blowup} ensures that $t\mapsto\varphi(\XX(t,x))$ can be continuously
extended to be identically $0$ on the time interval $[T_{\sXX}(x),T)$ (in case of blow-up before time $T$); furthermore, for the same reason, if $\varphi\in C^1_c(\R^d)$ then
the extended map is absolutely continuous in $[0,T]$ and
\begin{equation}\label{eq:nov1414}
\frac{d}{dt}\varphi(\XX(t,x))=\chi_{[0,T_{\sXX}(x))}(t)\,\nabla \varphi(\XX(t,x))\cdot \bb_t(\XX(t,x))\qquad
\text{for $\Leb{1}$-a.e. $t\in (0,T)$.}
\end{equation}
Therefore, using \eqref{eqn:rho-t-formula-bis} and integrating \eqref{eq:nov1414}, for all $\varphi\in C^1_c(\R^d)$ we find that 
$$
\frac{d}{dt} \int_{\R^d} \varphi\,\beta (\rho_t)\, dx 
= \int_{\{T_{\sXX}>t\}} \nabla \varphi(\XX(t,\cdot))\cdot \bb_t(\XX(t,\cdot))\, \beta (\rho_0)\, dx 
= \int_{\R^d} \nabla \varphi\cdot \bb_t\, \beta (\rho_t)\, dx
$$
for $\Leb{1}$-a.e. $t\in (0,T)$, which proves the renormalization property.

\smallskip
\noindent {\bf Step 2: proof of (i), strong continuity of $\rho_t$.}
We notice that, as a consequence
of the possibility of continuously extending the map $t\mapsto\varphi(\XX(\cdot,x))$ after the time $T_{\sXX}(x)$ for $\varphi \in C_c(\R^d)$, the map
 $[0,T)\ni t\mapsto\rho_t$ is weakly continuous in the duality with $C_c(\R^d)$.
 Let us prove now the strong continuity of $t\mapsto \rho_t$.
 
 We start with the proof for $t=0$. Fix $\epsilon>0$, let $\psi\in C_c(\R^d)$ with $\|\psi-\rho_0\|_1<\epsilon$, and notice that
the positivity $\Leb{d}$-a.e. in $\R^d$ of $T_{\sXX}$ gives
$$
\int_{\R^d}|\rho_t(x)-\psi(x)|\,dx\leq \int_{\sXX(t,\cdot)(\{T_{\sXX}>t\})}|\rho_t(x)-\psi(x)|\,dx+\int_{\sXX(t,\cdot)(\{0<T_{\sXX}\leq t\})}|\psi(x)|\,dx
$$
and that the second summand in the right hand side is infinitesimal as $t\downarrow 0$.
Changing variables and 
using \eqref{eqn:rho-t-formula} together with the incompressibility of the flow, it follows that
$$
\int_{\sXX(t,\cdot)(\{T_{\sXX}>t\})}|\rho_t(x)-\psi(x)|\,dx=\int_{\{T_{\sXX}>t\}}|\rho_0(x)-\psi(\XX(t,x))|\,dx,
$$
therefore
$$
\limsup_{t\downarrow 0}\int_{\R^d}|\rho_t-\psi|\,dx\leq
\limsup_{t\downarrow 0}\int_{\{T_{\sXX}>t\}}|\rho_0(x)-\psi(\XX(t,x))|\,dx\leq
\int_{\R^d}|\rho_0-\psi|\,dx.
$$
This proves that $\limsup_t\|\rho_t-\rho_0\|_1\leq 2\epsilon$ and, by the arbitrariness of $\epsilon$, the desired strong continuity at $t=0$ follows.

We now notice that the same argument together with the semigroup property of Theorem~\ref{thm:maximalflow}(iii)
 shows that the map $t\mapsto \rho_t$ is strongly continuous from the right in $L^1$.
In addition, reversing the time variable and using again the semigroup property, we deduce that the
identity $\rho_t(x)=\rho_s(\XX(t,s,x))\,1_{\{ T_{\sXX}>t\}}(\XX(0,s,x))$ holds, therefore
$$
\lim_{s\uparrow t}\int_{\R^d}|\rho_t (x)-\rho_s(x)\,
1_{\{ T_{\sXX}>t\}}(\XX(0,s,x)) |\,dx=0\qquad\forall \,t\in (0,T).
$$
Hence, in order to prove that  the map $t\mapsto \rho_t$ is strongly continuous in $L^1_{\rm loc}$, we are left to show that 
for every $R>0$ and $t\in (0,T)$ one has
\begin{equation}
\label{eq:left limit}
\lim_{s\uparrow t}\int_{B_R}|\rho_s(x)-\rho_s(x) \,1_{\{ T_{\sXX}> t\}}(\XX(0,s,x)) |\,dx=0.
\end{equation}
For this, we observe that by \eqref{eqn:rho-t-formula} and the incompressibility of the flow, we have that
\begin{equation}
\begin{split}
\int_{B_R}|\rho_s(x)-\rho_s(x)\, 1_{\{ T_{\sXX}> t\}}(\XX(0,s,x)) |\,dx 
&= \int_{B_R}|\rho_s|(x)\,1_{\{ T_{\sXX}\leq t\}}(\XX(0,s,x)) \,dx
\\
&=
\int_{\R^d}|\rho_0|(y)\,1_{\{ T_{\sXX}\leq t\}} (y)\,1_{B_R}(\XX(s,0,y)) \,dy.
\end{split}
\end{equation}
Since trajectories go to infinity when the time approaches $T_{\sXX}$ (see \eqref{eq:blowup}), it follows that
$$
1_{\{ T_{\sXX}\leq t\}} (y)\,1_{B_R}(\XX(s,0,y)) \to 0\quad \text{ for $\Leb{d}$-a.e. $y$ as $s \uparrow t$},
$$
so \eqref{eq:left limit} follows by dominated convergence.
This concludes the proof of (i).

\smallskip
\noindent {\bf Step 3: proof of (ii), renormalization property of $\rho_t$.}
We begin by showing that $\rho_t$ is a renormalized solution of the continuity equation.

By Remark~\ref{rmk:renorm-operativa} it is enough to prove that, given a bounded monotone nondecreasing function $\beta \in C^1(\R)$ 
and $\varphi \in C^{\infty}_c(\R^d)$ nonnegative, the function $t\mapsto\int_{\R^d} \varphi \,\beta (\rho_t)\, dx$ is absolutely
continuous in $[0,T]$ and
 \begin{equation}
 \label{eqn:foliaz-rinorm}
 \begin{split}
\frac{d}{dt} \int_{\R^d} \varphi \,\beta (\rho_t)\, dx
 &= \int_{\R^d} \nabla\varphi\cdot\bb_t\,\beta(\rho_t) \, dx\qquad\text{for $\Leb{1}$-a.e. $t\in (0,T)$.}
\end{split}
\end{equation}
To show that the map is absolutely continuous, let us consider $s,\,t \in [0,T]$
and let $\tilde \rho^t_r$ be the evolution of $\rho_t$ 
through the flow $\XX(\cdot, t,x)$, namely
\begin{equation}
\label{eqn:rho-tilde-t}
\tilde \rho^t_r:= \XX(r,t, \cdot)_\# (\rho_t \res \{T^+_{t,\sXX}>r>T^-_{t,\sXX}\}) \qquad \mbox{for every }r \in [0,T].
\end{equation}
Since $\eeta$ is transported by $\XX$ (by assumption), we claim that
\begin{equation}
\label{eqn:tilderho-rho}
\tilde \rho^t_r \leq \rho_r \qquad \mbox{for every }r\in [0,T].
\end{equation}
Indeed, with the notation of the statement of Theorem~\ref{thm:no-split-until-explosion},
since $\delta_{\sXX(r,t,x)}=(e_r)_\# \eeta^t_x$ for $\rho_t$-a.e. $x\in \{T^+_{t,\sXX}>r>T^-_{t,\sXX}\}$,
 for every $r\in [0,T]$ one has
 \begin{eqnarray*} 
\tilde \rho^t_r\, \Leb{d}&=&  \int_{\{T^-_{t,\sXX}<s\}} \delta_{\sXX(s,t,x)}\,\rho_t(x)\,dx
\leq \int_{\R^d} (e_r)_\# \eeta^t_x\,\rho_t(x)
\,dx\\
&=&(e_r)_\#\int_{\R^d} \eeta^t_x\,\rho_t(x)\,dx=(e_r)_\#\eeta
=\rho_r\,\Leb{d}.
\end{eqnarray*}
Combining \eqref{eqn:tilderho-rho}, the equality $\tilde \rho^t_t= \rho_t$, the monotonicity of $\beta$, and statement (i), we deduce that
\begin{equation}
\label{eqn:AC-sopra}
\int_{\R^d}[ \beta(\rho_t)- \beta(\rho_s)] \,\varphi \, dx 
\leq  \int_{\R^d}[ \beta(\tilde \rho^t_t)- \beta(\tilde \rho^t_s)]\, \varphi \, dx 
=\int_s^t \int_{\R^d} \beta(\tilde \rho^t_r) \,\nabla \varphi \cdot \bb_r \, dx \, dr
\end{equation}
and similarly
$$
\int_{\R^d}[ \beta(\rho_t)- \beta(\rho_s)] \,\varphi \, dx 
\geq  \int_{\R^d}[ \beta(\tilde \rho^s_t)- \beta(\tilde \rho^s_s)] \,\varphi \, dx 
=\int_s^t \int_{\R^d} \beta(\tilde \rho^s_r) \,\nabla \varphi \cdot \bb_r \, dx \, dr.
$$
In particular
$$\Big| \int_{\R^d}[ \beta(\rho_t)- \beta(\rho_s)] \,\varphi \, dx \Big| 
\leq
\|\beta\|_{\infty}  \int_{\R^d} \int_s^t |\nabla \varphi| \,|\bb_r| \, dr \,dx,
$$
which shows that the function $t\mapsto\int_{\R^d} \varphi \beta (\rho_t)\, dx$ is absolutely continuous in $[0,T]$. 

Hence, in order to prove \eqref{eqn:foliaz-rinorm} it suffices to notice that \eqref{eqn:AC-sopra} and the strong continuity of
$r\mapsto\tilde\rho^t_r$ at $r=t$ (ensured by statement (i)) give
$$
\int_{\R^d}[ \beta(\rho_t)- \beta(\rho_s)]\, \varphi \, dx 
\leq (t-s)\int_{\R^d} \beta(\rho_t)\, \nabla \varphi \cdot \bb_t \, dx +o(t-s),
$$
hence \eqref{eqn:foliaz-rinorm} holds at any differentiability point of $t\mapsto \int_{\R^d} \varphi\, \beta (\rho_t)\, dx$, thus  for a.e. $t$.

{
\smallskip
\noindent {\bf Step 4: proof of (ii), strong continuity of $\rho_t$.}
We now show that $\rho_t$ is strongly continuous on $[0,T)$ with respect to the
$L^1_{\rm loc}$ convergence; more precisely we show that, for every $t \in [0,T)$ and for every $r>0$,
\begin{equation}
\label{eqn:strong-cont}
\lim_{s\uparrow t} \int_{B_r}|\rho_s -\rho_t| \, dx =0
\end{equation}
(reversing the time variable, the same argument gives the right-continuity).
To this end, let us define $\tilde \rho^t$ as in \eqref{eqn:rho-tilde-t} for every $t\in[0,T]$; we claim that
\begin{equation}
\label{eqn:rappr-tilde-rho}
\tilde \rho^t_s =\rho_s\res \{T^+_{s,\sXX}>t\} \qquad \text{for every $s\in [0,t]$
}.
\end{equation}
Indeed, let us fix $s,t\in [0,T]$ and $s\leq t$. 
Denoting with $\eeta^t_x$ the disintegration of $\eeta$ with respect to the map $e_t$, recalling that $\eeta^t_x$ is concentrated on curves $\eta \in C([0,T]; \rncp{d})$ with $\eta(t) =x$,  by Theorem~\ref{thm:no-split-until-explosion}
we have that, for $\Leb{d}$-a.e. $x\in \R^d$,
\begin{equation*}
\begin{split}
1_{\{T^-_{t,\sXX}<s\}}(x)\, \delta_{\sXX(s,t,x)} 
&=
(e_s)_\# \Big( \eeta^t_x \res \big\{  \eta  \in C([0,T]; \rncp{d}): \eta(t) =x \mbox{ and } T^-_{t,\sXX}(x)<s\big\} \Big)
\\&=
(e_s)_\# \Big( \eeta^t_x \res \big\{  \eta  \in C([0,T]; \rncp{d}): \eta(t) \neq \infty \mbox{ and } T^-_{t,\sXX}(\eta(t))<s\big\} \Big),
\end{split}
\end{equation*}
hence we can rewrite $\tilde \rho^t_s $ in terms of $\eeta$ as
 \begin{equation}
 \label{eqn:rho-tilde-1}
 \begin{split}
 \tilde \rho^t_s \,\Leb{d}&=  \int_{\{T^-_{t,\sXX}<s\}} \delta_{\sXX(s,t,x)}\,\rho_t(x)\,dx
\\
&= \int_{\R^d} (e_s)_\# \Big( \eeta^t_x \res \big\{  \eta  \in C([0,T]; \rncp{d}): \eta(t) \neq \infty \mbox{ and } T^-_{t,\sXX}(\eta(t))<s\big\} \Big)\,\rho_t(x)
\,dx\\
&= (e_{s})_\# \Big( \eeta \res \big\{ \eta  \in C([0,T]; \rncp{d}): \eta(t) \neq \infty \mbox{ and } T^-_{t,\sXX}(\eta(t))<s\big\} \Big).
 \end{split}
\end{equation}
By the semigroup property (Theorem~\ref{thm:maximalflow}(iii)) there exists a set $E_{s,t} \subseteq \R^d$ of $\Leb{d}$-measure $0$ such that 
$$
T^{\pm}_{s,\sXX}(\XX(s,t,x))=T^\pm_{t,\sXX}(x)
\qquad\text{$\forall\,x\in\{T^+_{t,\sXX}> s>T^-_{t,\sXX}\} \setminus E_{s,t}$,}
$$
$$
T^{\pm}_{t,\sXX}(\XX(t,s,x))=T^\pm_{s,\sXX}(x)
\qquad\text{$\forall\,x\in\{T^+_{s,\sXX}> t>T^-_{s,\sXX}\}\setminus E_{s,t}$,}
$$
$$
\XX\bigl(\cdot,s,\XX(s,t,x)\bigr)=\XX(\cdot, t, x)\quad \text{in $(T^-_{t,\sXX}(x),T^+_{t,\sXX}(x))$ }
\qquad\text{$\forall\,x\in\{T^+_{t,\sXX}>s>T^-_{t,\sXX}\}\setminus E_{s,t}$,}
$$
$$
\XX\bigl(\cdot,t,\XX(t,s,x)\bigr)=\XX(\cdot, s, x)\quad \text{in $(T^-_{s,\sXX}(x),T^+_{s,\sXX}(x))$ }
\qquad\text{$\forall\,x\in\{T^+_{s,\sXX}>t>T^-_{s,\sXX}\}\setminus E_{s,t}$.}
$$
Since $(e_s)_\#\eeta \res \R^d$ is absolutely continuous with respect to $\Leb{d}$ (hence the set of curves $\eta$ such that $\eta(s)\in E_{s,t}$ is $\eeta$-negligible) and $\eeta$ is transported by the maximal regular flow, we have the following equalities, which hold up to a set of $\eeta$-measure $0$:
\begin{equation}
\begin{split}
& \big\{ \eta  \in C([0,T]; \rncp{d}): \eta(s) \neq \infty \mbox{ and } T^+_{s,\sXX}(\eta(s))>t \big\}
\\&=\big\{ \eta  \in C([0,T]; \rncp{d}): \eta(s) \neq \infty,\; \eta(s) \notin E_{s,t}, \; T^+_{s,\sXX}(\eta(s))>t
\\&\hspace{10em} \mbox{ and }\eta(\cdot) = \XX(\cdot, s,\eta(s)) \mbox{ in } (T^-_{s,\sXX}(\eta(s) ), T^+_{s,\sXX}(\eta(s)) \big\}
\\&=\big\{ \eta  \in C([0,T]; \rncp{d}): \eta(t) \neq \infty,\; \eta(t) \notin E_{s,t}, \; T^-_{t,\sXX}(\eta(t))<s
\\&\hspace{10em} \mbox{ and }\eta(\cdot) = \XX(\cdot, t,\eta(t)) \mbox{ in } (T^-_{t,\sXX}(\eta(t)) , T^+_{t,\sXX}(\eta(t)) \big\}
\\&=\big\{ \eta  \in C([0,T]; \rncp{d}): \eta(t) \neq \infty \mbox{ and } T^-_{t,\sXX}(\eta(t))<s\big\}
.\end{split}
\end{equation}
This implies that
\begin{equation*}
\begin{split}
\rho_s\res \{T^+_{s,\sXX}>t\}
&= (e_{s})_\# \Big( \eeta \res \big\{ \eta  \in C([0,T]; \rncp{d}): \eta(s) \neq \infty \mbox{ and } T^+_{s,\sXX}(\eta(s))>t \big\} \Big)
\\&= (e_{s})_\# \Big( \eeta \res \big\{ \eta  \in C([0,T]; \rncp{d}): \eta(t) \neq \infty \mbox{ and } T^-_{t,\sXX}(\eta(t))<s\big\} \Big),
\end{split}
\end{equation*}
that combined with \eqref{eqn:rho-tilde-1} gives \eqref{eqn:rappr-tilde-rho}.

Now, in order to prove \eqref{eqn:strong-cont}, we apply the triangular inequality to infer that
$$ \int_{B_r}|\rho_s -\rho_t| \, dx  \leq \int_{B_r}|\rho_s -\tilde \rho^t_s| \, dx + \int_{B_r}| \tilde\rho^t_s -\rho_t| \, dx .$$
The second term in the right-hand side converges to $0$ when $s\uparrow t$ by the strong $L^1_{\rm loc}$ continuity of $\rho^t_s$ with respect to $s$ proved in statement (i). To see that also the first term converges to $0$, we use \eqref{eqn:rappr-tilde-rho}, the identity $\rho_t \Leb{d}= (e_t)_\#\eeta \res \R^d$, and the fact that $\eeta$ is transported by the maximal flow, to obtain
\begin{equation*}
\begin{split}
\int_{B_r}|\rho_s& -\tilde \rho^t_s| \, dx
= \int_{B_r}\rho_s \,1_{\{T^+_{s,\ssXX}\leq t\}} \, dx= \int 1_{B_r \cap \{T^+_{s,\ssXX}\leq t\}}(\eta(s)) \, d\eeta(\eta)
\\ &=\eeta \Big( \big\{\eta: \eta(s) \in B_r \cap \{ T^+_{s,\ssXX}\leq t\} \mbox{ and } \eta(\cdot) = \XX(\cdot, s,\eta(s)) \mbox{ in } [s , T^+_{s,\sXX}(\eta(s))  \big\} \Big).
\end{split}
\end{equation*}
Notice that, if $\eta$ is a curve which belongs to the set in the last line, then it belongs to $B_r$ at time $s$ and blows up in $[s,t]$, thus
\begin{equation*}
\int_{B_r}|\rho_s - \rho^t_s| \, dx \leq \eeta \Big( \big\{\eta: \text{$\eta(s') \in B_r$ and $\eta(s'')= \infty$ for some $s', s'' \in [s,t]$} \big\} \Big).
\end{equation*}
Since set in the right-hand side monotonically decreases to the empty set as $s\uparrow t$, its $\eeta$-measure converges to $0$, which proves \eqref{eqn:strong-cont}
and concludes the proof.
}
\end{Proof}

{
We now discuss a general no blow-up criterion for a generalized flow $\eeta$. This result plays an important role in the proof of
Corollary~\ref{cor:vp-bounded1}.
\begin{proposition}[No blow-up criterion]\label{prop:no-blow-up}
Let $\bb \in L^1_{\rm loc}([0,T] \times \R^d;\R^d)$ be a Borel vector field, let $\eeta \in \Measuresp{C([0,T];\rncp{d})}$ be a generalized flow of $\bb$, and for $t \in [0,T]$ let $\mu_t := (e_t)_\# \eeta \res \R^d$. Let $\eta_\infty$ denote the constant curve $\eta \equiv \infty$, and assume that $\eeta(\{\eta_\infty\})=0$ and
\begin{equation}
\label{eqn:no-blow-up-log}
\int_0^T \int_{\R^d} \frac{|\bb_t|(x)}{(1+|x|) \log(2+|x|)}\, d\mu_t(x)\, dt <\infty.
\end{equation}
Then $\eeta$ is concentrated on curves that do not blow up, namely
$$\eeta\big( \{ \eta \in C([0,T]; \rncp{d})): \eta(t) =\infty \mbox{ for some } t\in[0,T]\} \big) = 0.$$
In particular, if we assume that $\mu_t\ll\Leb{d}$ for every $t \in [0,T]$ and that $\eeta$ is concentrated on the maximal regular flow $\XX$ associated to $\bb$, then $\XX$ is globally defined on $[0,T]$ for $\mu_0$-a.e. $x$, namely the trajectories
$\XX(\cdot,x)$ belong to $AC([0,T];\R^{d})$ for $\mu _0$-a.e. $x\in\R^{d}$.
\end{proposition}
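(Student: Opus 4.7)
The plan is to exploit an energy-type functional built from a primitive of the weight appearing in \eqref{eqn:no-blow-up-log}. Set
$$
\Phi(r):=\log\log(2+r)-\log\log 2,\qquad r\in[0,\infty),
$$
so that $\Phi$ is nonnegative, strictly increasing, $\Phi(r)\to\infty$ as $r\to\infty$, and
$$
\Phi'(r)=\frac{1}{(2+r)\log(2+r)}\geq \frac{c}{(1+r)\log(2+r)}
$$
for some absolute constant $c>0$. I will use $\Phi(|\cdot|)$ as a Lyapunov function along the trajectories in the support of $\eeta$.

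First, I would apply Fubini's theorem, together with the definition of $\mu_t$ and the nonnegativity of the integrand, to show that
$$
\int_{C([0,T];\srncp{d})}\!\!\!\!\!\!\!g(\eta)\,d\eeta(\eta)
=\int_0^T\!\!\int_{\R^d}\Phi'(|x|)\,|\bb_t|(x)\,d\mu_t(x)\,dt
\leq C\int_0^T\!\!\int_{\R^d}\frac{|\bb_t|(x)}{(1+|x|)\log(2+|x|)}\,d\mu_t(x)\,dt<\infty,
$$
where $g(\eta):=\int_{\{t:\eta(t)\neq\infty\}}\Phi'(|\eta(t)|)\,|\bb_t(\eta(t))|\,dt$. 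Hence $g(\eta)<\infty$ for $\eeta$-a.e. $\eta$. A small measurability check is needed here: the set $\{(t,\eta):\eta(t)\neq\infty\}$ is Borel in $[0,T]\times C([0,T];\rncp{d})$, and on it the integrand is Borel, so Fubini applies.

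Next, I would prove that every $\eta\in\Gamma$ which is not identically $\infty$ and which satisfies $\eta(t_0)=\infty$ for some $t_0\in[0,T]$ must have $g(\eta)=\infty$. Indeed, pick $s\in[0,T]$ with $\eta(s)\in\R^d$; assuming without loss of generality $s<t_0$, let
$$
b:=\inf\{r\in(s,T]:\eta(r)=\infty\}\in(s,t_0].
$$
By minimality $\eta$ stays in $\R^d$ on $[s,b)$, and by continuity in the one-point compactification topology $|\eta(r)|\to\infty$ as $r\uparrow b$. Since $\eta\in\Gamma$ is locally absolutely continuous on $[s,b)$ with $\dot\eta(r)=\bb_r(\eta(r))$ a.e., the chain rule yields
$$
\Phi(|\eta(r)|)-\Phi(|\eta(s)|)\leq \int_s^r\Phi'(|\eta(u)|)\,|\bb_u(\eta(u))|\,du\qquad\forall\,r\in(s,b).
$$
Letting $r\uparrow b$, the left-hand side diverges, so $\int_s^b\Phi'(|\eta(u)|)\,|\bb_u(\eta(u))|\,du=+\infty$ and in particular $g(\eta)=+\infty$. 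The case $s>t_0$ is handled symmetrically by reversing time on an analogous maximal interval to the left of $t_0$.

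Combining the two steps: the set of curves that blow up at some $t_0\in[0,T]$ is contained in $\{\eta_\infty\}\cup\{g=\infty\}$, which has $\eeta$-measure zero by the assumption $\eeta(\{\eta_\infty\})=0$ and by Step 1. This proves the first claim. For the ``in particular'' part, the assumption that $\mu_t\ll\Leb{d}$ and that $\eeta$ is transported by $\XX$ means that for $\mu_0$-a.e. $x$ the curve $\XX(\cdot,0,x)$ lies in the support of $\eeta$; since such curves do not blow up, we must have $T^-_{0,\sXX}(x)=0$ and $T^+_{0,\sXX}(x)=T$, hence $\XX(\cdot,0,x)\in AC([0,T];\R^d)$, as desired.

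The main technical delicacy is the handling of the chain rule across the compactification $\rncp{d}$: one has to argue that $\{t:\eta(t)\neq\infty\}$ is open and carry out the Lyapunov estimate on each of its connected components, taking limits at the endpoints to produce a divergent contribution to $g(\eta)$. Apart from that, the argument is a standard Osgood/log-log growth comparison.
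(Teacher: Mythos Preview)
Your argument is correct and follows the same core idea as the paper: both use the Lyapunov function $\Phi(r)=\log\log(2+r)$ together with Fubini to show that the total variation of $\Phi(|\eta(t)|)$ along $\eeta$-a.e.\ trajectory is finite, which rules out blow-up. Your organization is in fact slightly more direct: the paper first partitions curves according to a dense set of rational times $t_n$ at which they are finite (sets $\Gamma_n$) and invokes Theorem~\ref{thm:no-split-until-explosion} to identify the maximal interval of finiteness, whereas you work directly with the connected components of $\{\eta\neq\infty\}$, which is all that is needed.

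One small imprecision: in the ``in particular'' part, saying that $\XX(\cdot,0,x)$ ``lies in the support of $\eeta$'' is not the right notion. The paper argues via disintegration of $\eeta$ with respect to $e_0$: for $\mu_0$-a.e.\ $x$ the probability measure $\eeta_x$ is concentrated on curves that simultaneously (a) start at $x$, (b) agree with $\XX(\cdot,0,x)$ on $[0,T^+_{0,\sXX}(x))$, and (c) do not blow up. Since $\eeta_x$ is a probability, this set is nonempty, and any curve in it witnesses that $\XX(\cdot,0,x)$ stays bounded near $T^+_{0,\sXX}(x)$, forcing $T^+_{0,\sXX}(x)=T$. Also, a cosmetic slip: you state the lower bound $\Phi'(r)\geq c/[(1+r)\log(2+r)]$ but what your Fubini step actually uses is the (equally easy) upper bound.
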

\begin{proof}
Since $\eeta(\{\eta_\infty\})=0$ we know that $\eeta$-a.e. curve is finite at some time.
In particular, if we fix a dense set of rational times $\{t_n\}_{n \in \N}\subset [0,T]$,
we see that (by continuity of the curves) $\eeta$ is concentrated on $\cup_{n \in \N}\Gamma_n$ with
$$
\Gamma_n:=\{ \eta \in C([0,T]; \rncp{d})): \eta(t_n) \in \R^d \},
$$
so it is enough to show that $\eeta\res{\Gamma_n}$ is concentrated on curves that do not blow up.

By applying Theorem \ref{thm:no-split-until-explosion} with $s=t_n$
it follows that $\eeta\res{\Gamma_n}$ is concentrated on curves $\eta$ that are finite on the time interval 
$(T^-_{t_n,\sXX}(\eta(t_n)),T^+_{t_n,\sXX}(\eta(t_n)))\subset [0,T]$.
Hence, 
since $(e_t)_\#(\eeta\res{\Gamma_n})\leq \mu_t$,
by Fubini theorem and assumption \eqref{eqn:no-blow-up-log} we get 
\begin{equation*}
\begin{split}
\int \int_{T^-_{t_n,\sXX}(\eta(t_n))}^{T^+_{t_n,\sXX}(\eta(t_n))} & \Big|\frac{d}{dt} \bigl[\log \log(2+|\eta(t)|)\bigr]\Big| \, dt \, d[\eeta\res{\Gamma_n}](\eta)\\
&\leq \int \int_{T^-_{t_n,\sXX}(\eta(t_n))}^{T^+_{t_n,\sXX}(\eta(t_n))} \frac{|\dot \eta(t)|}{(1+|\eta(t)|) \log(2+|\eta(t)|)}\,dt \, d[\eeta\res{\Gamma_n}](\eta)
\\&= \int \int_{T^-_{t_n,\sXX}(\eta(t_n))}^{T^+_{t_n,\sXX}(\eta(t_n))} \frac{|\bb_t|( \eta(t))}{(1+|\eta(t)|) \log(2+|\eta(t)|)} \,dt \, d[\eeta\res{\Gamma_n}](\eta)
\\& \leq \int_0^T \int_{\R^d} \frac{|\bb_t|(x)}{(1+|x|) \log(2+|x|)}\, d\mu_t(x)\, dt <\infty.
\end{split}
\end{equation*}
This implies that, for $\eeta$-a.e. curve $\eta \in \Gamma_n$,
\begin{multline*}
\sup_{T^-_{t_n,\sXX}(\eta(t_n))\leq s<\tau \leq T^+_{t_n,\sXX}(\eta(t_n))} \big|\log \log(2+|\eta(s)|) -\log \log(2+|\eta(\tau)|)\big|\\
\leq \int_{T^-_{t_n,\sXX}(\eta(t_n))}^{T^+_{t_n,\sXX}(\eta(t_n))} \Big|\frac{d}{dt} \bigl[\log \log(2+|\eta(t)|)\bigr]\Big| \, dt < \infty,
\end{multline*}
which in turn says that $T^-_{t_n,\sXX}(\eta(t_n))=0$, $T^+_{t_n,\sXX}(\eta(t_n))=T$, and the curve $\eta$ cannot blow up in $[0,T]$, as desired.

To show the second part of the statement, le us consider the disintegration of $\eeta$ with respect to $e_0$. By the properties of $\eeta$ we have that, for $\mu_0$-a.e. $x$, the probability measure $\eeta_x$ is concentrated on the set
$$\big\{ \eta: \eta(0)= x,\; \eta \neq \infty \mbox{ in }[0,T], \eta = \XX(\cdot, x) \mbox{ in }[0,T_\sXX(x)) \big\}.$$
Since $\eeta_x$ is a probability measure it follows that this set is nonempty, that $T_\sXX(x) =T$, and this set has to coincide with $\{ \XX(\cdot, x)\}$, thus $\eeta_x = \delta_{\sXX(\cdot, x)}$, as desired.
\end{proof}
}

\section{The superposition principle under local integrability bounds}\label{sec:repres-flow}
In order to represent the solution to the continuity equation by means of a generalized flow we would like to apply the so-called superposition principle (see  \cite[Theorem~12]{bologna} or \cite[Theorem~2.1]{amcofi}). However, the lack of global bounds makes this approach very difficult to implement. An analogue of the classical superposition principle is the content of the following theorem.

\begin{theorem}[Extended superposition principle]\label{thm:repr-foliat} Let $\bb \in L^1_{\rm loc}([0,T] \times \R^d;\R^d)$ be a Borel vector field,
and let $\rho_t\in L^\infty((0,T); L^1_+(\R^d))$ be a distributional solution of the continuity equation, weakly continuous
in duality with $C_c(\R^d)$. Assume that:\\
(i) either $|\bb_t|\rho_t \in L^1_{\rm loc}([0,T]\times\R^d)$;\\
(ii) or $\div \bb_t=0$ and $\rho_t$ is a renormalized solution.\\
Then there exists $\eeta \in \Measuresp{C([0,T]; \rncp{d})}$, concentrated on the set $\Gamma$ defined in \eqref{defn:gen-flow},
which satisfies
$$|\eeta|(C([0,T]; \rncp{d})) \leq \sup\limits_{t\in [0,T]} \|\rho_t\|_{L^1(\R^d)}$$
and 
$$(e_t)_\# \eeta \res \R^d= \rho_t\Leb{d} \qquad \mbox{for every }t\in [0,T].$$
In addition, if $\rho_t$ belongs also to $L^\infty((0,T); L^\infty_+(\R^d))$ (or $\rho_t$ is a renormalized solution)
and $\bb$ is divergence-free and satisfies {\bf (A1)}-{\bf (A2)} of Section~\ref{sec:maxflow}, then $\eeta$ is transported by the Maximal Regular Flow of $\XX$ of $\bb$.
\end{theorem}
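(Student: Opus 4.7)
The argument proceeds in three stages: reduce case (ii) to a bounded-density instance of case (i); construct $\eeta$ in the bounded/locally-integrable case by mollification and a narrow-compactness argument on $C([0,T];\rncp{d})$; finally deduce the transport statement via Theorem~\ref{thm:no-split-until-explosion}.

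\textbf{Reduction (ii)$\Rightarrow$(i) with bounded $\rho$.} Choose bounded $C^1$ truncations $\beta_k$ (smoothed versions of $(s-k)_+\wedge 1$) with $\sum_{k\geq 0}\beta_k(s)=s$ for $s\geq 0$. Under (ii), the renormalization property together with $\div\bb=0$ makes each $\rho^k_t:=\beta_k(\rho_t)\in L^\infty$ a bounded distributional solution of the continuity equation, and $|\bb|\rho^k\in L^1_{\rm loc}$ since $\bb\in L^1_{\rm loc}$; hence each $\rho^k$ falls under (i). If $\eeta^k$ represents $\rho^k$, then $\eeta:=\sum_k\eeta^k$ is a finite nonnegative measure on $C([0,T];\rncp{d})$ of total mass at most $\sup_t\|\rho_t\|_{L^1}$ with marginal $\rho_t\Leb{d}$ on $\R^d$.

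\textbf{Proof of case (i).} Spatially mollify with a standard kernel $\psi_\eps$: set $\rho^\eps:=\rho\ast\psi_\eps$ and $w^\eps:=(\bb\rho)\ast\psi_\eps$, so that $\partial_t\rho^\eps+\div w^\eps=0$, and $\bb^\eps:=w^\eps/\rho^\eps$ (extended by $0$ on $\{\rho^\eps=0\}$) makes $\rho^\eps$ a continuity-equation solution with $|\bb^\eps|\rho^\eps=|w^\eps|$ uniformly bounded in $L^1_{\rm loc}$. Apply a local version of the classical superposition principle (cf.~\cite[Theorem~12]{bologna}, localized by stopping curves at exit from large balls and extending stopped trajectories by $\infty$) to obtain $\eeta^\eps\in\Measuresp{C([0,T];\rncp{d})}$ concentrated on extended integral curves of $\bb^\eps$ with marginal $\rho^\eps_t\Leb{d}$ on $\R^d$. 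Metrizing $\rncp{d}$ by stereographic projection, the uniform $L^1_{\rm loc}$ bound on $|\bb^\eps|\rho^\eps$ translates, by Markov's inequality along trajectories, into an equicontinuity estimate on $\eeta^\eps$-a.e.\ curve; together with compactness of $\rncp{d}$ this gives tightness of $\{\eeta^\eps\}$ in $C([0,T];\rncp{d})$, so Prokhorov yields a narrow cluster point $\eeta$. The marginal identity passes to the limit, using weak continuity of $t\mapsto\rho_t$, giving $(e_t)_\#\eeta\res\R^d=\rho_t\Leb{d}$ and the required mass bound.

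\textbf{Main obstacle and transport.} The central technical point is to show $\eeta$ is concentrated on $\Gamma$, i.e., that its curves satisfy $\dot\gamma=\bb_t(\gamma)$ a.e.\ on $\{\gamma\neq\infty\}$. My plan is to test against the lower-semicontinuous functionals
\begin{equation*}
G_\varphi(\gamma):=\int_0^T\varphi(\gamma(t))\,\Bigl|\gamma(t)-\gamma(0)-\int_0^t\bb_s(\gamma(s))\,ds\Bigr|\,dt,\qquad \varphi\in C_c(\R^d),
\end{equation*}
rewriting $\int G_\varphi\,d\eeta^\eps$ via the marginal identity $(e_s)_\#\eeta^\eps\res\R^d=\rho^\eps_s\Leb{d}$ as a space-time integral of $|w^\eps-\bb\rho|\varphi$ (plus error terms controlled by $|\bb^\eps-\bb|\rho^\eps\varphi$), which vanish as $\eps\to 0$ by strong $L^1_{\rm loc}$ convergence $w^\eps\to\bb\rho$. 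Finally, when $\rho\in L^\infty$ (or, in case (ii), arguing on each $\eeta^k$ separately), the bound $(e_t)_\#\eeta\res\R^d\leq C\Leb{d}$ makes $\eeta$ a regular generalized flow in the sense of Definition~\ref{defn:genflow}; Theorem~\ref{thm:no-split-until-explosion} then gives transport by the Maximal Regular Flow $\XX$ of $\bb$, with the summation over $k$ preserving this property in case (ii).
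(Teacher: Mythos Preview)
Your reduction of case (ii) to bounded pieces $\beta_k(\rho_t)$ and the final appeal to Theorem~\ref{thm:no-split-until-explosion} for the transport property are correct and match the paper. The difficulty is entirely in the construction of $\eeta$ under (i), and here your scheme has two genuine gaps.

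\textbf{Tightness does not follow from $L^1_{\rm loc}$ bounds alone.} You metrize $\rncp{d}$ by a fixed stereographic projection and claim that the uniform bound on $\int_0^T\int_{B_R}|\bb^\eps|\rho^\eps$ yields equicontinuity of $\eeta^\eps$-typical curves in $C([0,T];\rncp{d})$. But in a fixed spherical metric the speed of $\gamma$ is $\sim |\dot\gamma|/(1+|\gamma|^2)$, so what you would need is a bound on $\sum_n 4^{-n}\int_0^T\int_{B_{2^n}}|\bb|\rho$; since the local norms $\int_0^T\int_{B_R}|\bb|\rho$ may grow arbitrarily fast in $R$, this fails in general. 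Relatedly, your invocation of a ``local superposition principle'' obtained by stopping at exit from large balls and sending stopped curves to $\infty$ is essentially circular: the classical principle in \cite{bologna} requires the global integrability $\int_0^T\int|\bb_t|\,d\mu_t<\infty$, and manufacturing a version that works under only local bounds is precisely the content of the theorem.

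\textbf{Concentration on $\Gamma$ does not follow from your $G_\varphi$ test.} Even granting a narrow limit $\eeta$, your functional $G_\varphi$ involves $\bb$ (not continuous), so it is not lower semicontinuous and cannot be passed to the limit directly. Rewriting via the marginal identity leads you to the commutator $(\bb\rho)\ast\psi_\eps-\bb\,(\rho\ast\psi_\eps)$, whose vanishing in $L^1_{\rm loc}$ is a DiPerna--Lions type statement that requires either $\rho\in L^\infty_{\rm loc}$ or Sobolev regularity of $\bb$; neither is assumed in (i). Also, $\gamma(0)$ may equal $\infty$, so $G_\varphi$ as written is ill-defined on the support of $\eeta^\eps$.

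The paper avoids both problems by a single device: it builds a \emph{data-dependent} smooth diffeomorphism $\psi:\R^d\to\sfera{d}\setminus\{N\}$ (Lemma~\ref{lemma:decaying-diffeo}) whose gradient decays fast enough, chosen as a function of $r\mapsto\int_0^T\int_{B_r}|\bb|\rho$, so that $\int_0^T\int_{\R^d}|\nabla\psi|\,|\bb|\,\rho<\infty$. Pushing the continuity equation to $\sfera{d}$ and adding the missing mass at $N$, one obtains a weakly continuous probability solution on the compact sphere with \emph{globally} integrable velocity $\cc_t=\nabla\psi(\phi)\cdot\bb_t(\phi)$; the classical superposition principle then applies directly on $\sfera{d}$, with no mollification, no compactness step, and no commutator. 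Pulling back by $\phi=\psi^{-1}$ (extended by $\phi(N)=\infty$) yields $\eeta$ already concentrated on $\Gamma$. This is the missing idea in your argument.
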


\begin{remark} {\rm 
Noticing that the assumption $|\eeta|(C([0,T]; \rncp{d})) \leq \sup_{t\in [0,T]}\mu_t(\R^d)$ implies that the curve $\eta \equiv \infty$ has $\eeta$-measure $0$, 
if we assume that
\begin{equation}
\label{hp:int-b}
\int_0^T \int_{\R^d} \frac{|\bb_t|(x)}{(1+|x|) \log(2+|x|)}\,\rho_t(x)\,dx\,dt <\infty
\end{equation}
then 
it follows by Proposition \ref{prop:no-blow-up} that $\rho_t$ is transported by the Maximal Flow, namely 
$T_{0,\sXX}^+(x)=T$, $\XX(\cdot,0,x)\in AC([0,T];\R^d)$ for a.e. $x\in\{\rho_0>0\}$, and $\rho_t\Leb{d} = \XX(t,\cdot)_\#\rho_0\Leb{d}$.
%
}\end{remark}

Let us first briefly explain the idea behind the proof of the theorem above.
To overcome the lack of global bounds on $\bb$ we introduce a kind of ``damped'' stereographic projection, with a damping depending on the growth of $|\bb|$ at $\infty$, and
we look at the flow of $\bb$ on the $d$-dimensional sphere
$\sfera{d}$ in such a way that the north pole $N$ of the sphere corresponds to the points at infinity of $\R^d$. Then we apply the superposition principle in these new variables and eventually, going back to the original variables, we  obtain a representation of the solution as a generalized flow. Let us observe that it is crucial for us that the map sending $\R^d$ onto $\sfera{d}$ is chosen a function of $\bb$: indeed, as we shall see, by shrinking enough distances at infinity we can ensure that the vector field read on the sphere becomes \emph{globally} integrable.

We denote by $N$ be the north pole of the $d$-dimensional sphere 
$\sfera{d}$, thought of as a subset of $\R^{d+1}$. For our constructions, we will use a smooth diffeomorphism which 
maps $\R^d$ onto $\sfera{d} \setminus \{N\}$ and whose derivative has a prescribed decay at $\infty$.

\begin{lemma}\label{lemma:decaying-diffeo}
Let $D: [0,\infty) \to (0,1]$ be a monotone nonincreasing function. 
Then there exist $r_0>0$ and a smooth diffeomorphism $\psi:\R^d \to \sfera{d}\setminus\{N\} \subset \R^{d+1}$ such that 
\begin{equation}\label{eqn:psi-cont-1}
\text{$\psi(x)\rightarrow N$ as $|x|\to\infty$,}
\end{equation}
\begin{equation}\label{eqn:psi-lip}
|\nabla \psi(x)| \leq D(0) \qquad \forall \,x\in \R^d,
\end{equation}
\begin{equation}\label{eqn:psi-dec}
|\nabla \psi(x)| \leq D(|x|) \qquad \forall \,x\in \R^d\setminus B_{r_0}.
\end{equation}
\end{lemma}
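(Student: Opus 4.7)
My plan is to realize $\psi$ as a radial map into $\sfera{d}\subset\R^{d+1}$ using the polar-angle parameterization of the sphere from the south pole. Concretely, I would set
$$\psi(x) := \bigl(\sin g(|x|)\cdot\tfrac{x}{|x|},\; -\cos g(|x|)\bigr),$$
where $g:[0,\infty)\to[0,\pi)$ is smooth, strictly increasing, $g(0)=0$, and $g(r)\to\pi$ as $r\to\infty$. This automatically gives (\ref{eqn:psi-cont-1}), since $\psi(x)\to(0,\ldots,0,-\cos\pi) = (0,\ldots,0,1) = N$. A direct Jacobian computation in polar coordinates yields, for $x\neq 0$,
$$|\nabla\psi(x)|\;\leq\; g'(|x|)\;+\;\frac{\sin g(|x|)}{|x|}\;\leq\; g'(|x|)\;+\;\frac{\pi-g(|x|)}{|x|},$$
using $\sin(\pi-t)\leq t$ on $[0,\pi]$. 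Thus (\ref{eqn:psi-dec}) reduces to constructing $g$ with $g'(r)\leq D(r)/2$ and $\pi-g(r)\leq r D(r)/2$ for $r\geq r_0$, while (\ref{eqn:psi-lip}) will follow by arranging a uniform bound and rescaling.

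For the construction of $g$, I would first replace $D$ by a smooth, monotone nonincreasing minorant $\tilde D\leq D$ with $\tilde D(0)\leq D(0)$ (obtained, for instance, by mollifying the staircase $r\mapsto D(\lceil r\rceil)$). Then set $\mu(r):=\tilde D(r)/(1+r)^2$; this is smooth, positive, nonincreasing, and integrable on $[0,\infty)$. Fixing the normalization $c$ so that $c\int_0^\infty\mu(s)\,ds = \pi$, I define
$$g(r)\;:=\;c\int_0^r\mu(s)\,ds,$$
so that $g$ is smooth, strictly increasing from $0$ to $\pi$. The first condition is immediate from $g'(r)=c\tilde D(r)/(1+r)^2$, which is $\leq D(r)/2$ once $(1+r)^2\geq 2c$. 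The second follows since
$$\pi-g(r)\;=\;c\int_r^\infty\frac{\tilde D(s)}{(1+s)^2}\,ds\;\leq\; c\tilde D(r)\int_r^\infty\!(1+s)^{-2}\,ds\;=\;\frac{c\tilde D(r)}{1+r},$$
so $(\pi-g(r))/r\leq c\tilde D(r)/(r(1+r))\leq D(r)/2$ for $r\geq r_0$ with $r_0$ large enough. Taking the maximum of these two thresholds gives the desired $r_0$, and a final rescaling $g\mapsto \lambda g$ by a small $\lambda>0$ makes the constant in (\ref{eqn:psi-lip}) no larger than $D(0)$ (while only improving the decay estimate).

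The step I expect to require the most care is the smoothness of $\psi$ at the origin: since $\psi(x)$ is of the form $(F_1(|x|^2)\,x,\,F_2(|x|^2))$ with $F_1(s)=\sin g(\sqrt s)/\sqrt s$ and $F_2(s)=-\cos g(\sqrt s)$, smoothness at $0$ requires $g$ to extend as a smooth \emph{odd} function of $r$ near $0$, equivalently, all even-order derivatives of $g$ at $0$ should vanish. With $g$ defined purely through $\mu(r)=\tilde D(r)/(1+r)^2$ this fails in general (e.g.\ $g''(0)=c\mu'(0)\neq 0$). I would fix this by redefining $g$ on a neighborhood of $0$, say $[0,1]$, to be a fixed smooth odd function such as $\beta\tanh(r)$ (choosing $\beta>0$), and gluing it smoothly to the integral formula on $[2,\infty)$ via a cutoff on $[1,2]$. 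Since all the asymptotic estimates above concern the region $r\geq r_0$, we may take $r_0\geq 2$, so this local modification does not affect (\ref{eqn:psi-dec}); and the resulting $g$ is smooth with $g>0$, $g'>0$ everywhere, making $\psi$ a smooth diffeomorphism onto $\sfera{d}\setminus\{N\}$.
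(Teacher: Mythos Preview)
Your overall strategy coincides with the paper's: a radial map $\psi(x)=(\sin g(|x|)\,x/|x|,\,-\cos g(|x|))$ with $g:[0,\infty)\to[0,\pi)$ built as a normalized integral of a smooth, integrable minorant of $D$. The gradient estimate and the treatment of (\ref{eqn:psi-dec}) via $\pi-g(r)=c\int_r^\infty\mu$ match the paper's reasoning closely.

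There is, however, a genuine gap in your handling of the global bound (\ref{eqn:psi-lip}). The ``final rescaling $g\mapsto\lambda g$'' you propose destroys the construction: if $\lambda<1$ then $g(r)\to\lambda\pi<\pi$, so $\psi$ is no longer onto $\sfera{d}\setminus\{N\}$ and (\ref{eqn:psi-cont-1}) fails. Without that rescaling your argument gives no control on $\sup_x|\nabla\psi|$: you have $g'(0)=c\,\tilde D(0)$ with $c=\pi/\int_0^\infty\mu$, and since $\mu\leq(1+r)^{-2}$ one gets $c\geq\pi$; there is no reason why $c\,\tilde D(0)\leq D(0)$. The paper avoids this by building the needed slack into the minorant from the outset, replacing $D$ by $D_0(r):=\min\{1/4,\,r^{-2}\}D(r)$ and checking that the resulting construction yields $|\nabla\psi|\leq 4D_0(0)=D(0)$ globally. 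You should do something analogous rather than rescale $g$ after the fact.

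A smaller point: your gluing of $\beta\tanh r$ on $[0,1]$ to the integral formula on $[2,\infty)$ via a cutoff does not automatically keep $g$ strictly increasing on $[1,2]$. It works if you take $\beta$ small enough that $\beta\tanh r<c\int_0^r\mu$ on $[1,2]$ (then the convex combination is monotone), but you should say so. The paper sidesteps this entirely by arranging the integrand to be \emph{constant} near $0$ (through an asymmetric mollification of a piecewise-constant minorant), so that $g$ is exactly linear near the origin; then smoothness of $\psi$ at $0$ follows directly from the smoothness of $t\mapsto\sin(\sqrt t)/\sqrt t$ and $t\mapsto\cos(\sqrt t)$, with no gluing needed.
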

\begin{Proof} We split the construction in two parts: first we perform a $1$-dimensional construction, and then we use this 
construction to build the desired diffeomorphism.

\smallskip

\noindent {\bf Step 1: $1$-dimensional construction.}
Let $D_0: [0,\infty) \to (0,1]$ be a monotone nonincreasing function. 
We claim that there exists a smooth diffeomorphism $\psi_0 : [0,\infty) \to [0,\pi)$ such that 
\begin{equation}\label{eqn:psi-cont-2}
\lim_{r\to\infty} \psi_0(r) = \pi,\qquad \lim_{r\to\infty} \psi_0'(r) = 0,
\end{equation}
\begin{equation}\label{eqn:psi-vicino-a-0}
\text{$\psi_0(r) = c_0\,D_0(0)\,r$\qquad $\forall\,r\in [0, \pi/D_0(0))$}, \quad \mbox{for some }c_0\in(0,1),
\end{equation}
\begin{equation}\label{eqn:psi-lip-0}
|\psi_0'(r)| \leq {D_0}(0) \qquad \forall \,r\in [0,\infty),
\end{equation}
\begin{equation}\label{eqn:psi-dec-0}
{|\psi_0'(r)| \leq {D_0}(r) \qquad \forall \,r\in [ 2\pi /D_0(0), \infty).}
\end{equation}
Indeed,
define the nonincreasing $L^1$ function $D_1:[0,\infty)\to (0,\infty)$ as 
$$D_1(r) :=
\begin{cases}
D_0(0) \qquad& \mbox{if } r \in [0,  1+\pi/D_0(0)]
\\
\min\{D_0(r), r^{-2}\} \qquad &
\mbox{if } r \in (1+\pi/D_0(0), \infty).
 \end{cases}
$$ 
We then consider an asymmetric convolution kernel, namely a nonnegative function $\sigma \in C^\infty_c((0,1))$ with $\int_\R \sigma = 1$,
and consider the convolution of $D_1(r)$ with $\sigma (-r)$:
$$\psi_1(r) := \int_0^1 \sigma(r') D_1(r+r') \, dr' \qquad \forall \,r\in [0,\infty).$$
Notice that $\psi_1$ is smooth on $(0,\infty)$, positive, nonincreasing, and $\psi_1 \leq D_1$ in $[0,\infty)$. In particular $\psi_1\in L^1((0,\infty))$. 
Moreover we have that $\psi_1\equiv D_0(0)$ in $[0, \pi /D_0(0)]$, hence $\|\psi_1\|_{L^1((0,\infty))} \geq \pi$ and
$c_0:= \pi\|\psi_1\|_{L^1((0,\infty))}^{-1} \in(0,1)$.
Finally, we define $\psi_0$ as
$$
\psi_0(r):=c_0\int_0^r \psi_1(s)\,ds\qquad \forall \,r\in [0,\infty).
$$
Since $|\psi_0'(r)| = c_0 |\psi_1(r)| \leq D_1(r)$, taking into
account that $\pi/D_0(0)>1$ it is easy to check that all the desired properties
are satisfied.
\smallskip

\noindent {\bf Step 2: ``radial'' diffeomorphism in any dimension.}
Let $D_0: [0,\infty) \to (0,1]$ to be chosen later and consider $\psi_0$ and $c_0$ as in Step 1. We define $\psi:\R^d \to \sfera{d}\setminus\{N\} \subset\R^{d+1}$ which maps every half-line starting at the origin to an arc of sphere between the south pole and the north pole:
$$\psi(x) := \sin(\psi_0(|x|)) \Big(\frac{x}{|x|},0\Big)- \cos(\psi_0(|x|)) \bigl(0,\ldots,0,1\bigr).$$
Thanks to \eqref{eqn:psi-vicino-a-0} and to the fact that the functions $x \mapsto |x|^2$, $t\mapsto \sin( \sqrt t )/\sqrt t$,  and $t\mapsto \cos( \sqrt t)$ are all of class $C^\infty$,  we obtain that $\psi\in C^\infty(\R^d; \R^{d+1})$. We also notice that its inverse $\phi:\sfera{d} \setminus\{N\}\to\R^d$ can be explicitly computed: 
\begin{align*}
\phi(x_1,\ldots, x_{d+1}) &= 
\psi_0^{-1} (\arccos(-x_{d+1})) \,\frac{(x_1,\ldots,x_d)}{|(x_1,\ldots,x_d)|}\\
& = 
\psi_0^{-1} (\arcsin(|(x_1,\ldots,x_d)|)) \,
\frac{(x_1,\ldots,x_d)}{|(x_1,\ldots,x_d)|}.
\end{align*}
Writing $r=|x|$ and denoting by $I_d$ the identity matrix on the first $d$ components, we compute the gradient of $\psi$:
\begin{multline*}
\nabla \psi(x) =
\frac{\cos(\psi_0(r))\, \psi_0'(r)\, r- \sin(\psi_0(r))}{r^3} \,(x,0) \otimes (x,0)
+ \frac{\sin(\psi_0(r))}{r} \,I_d 
\\
-\frac{ \sin(\psi_0(r)) \,\psi_0'(r)}{r}\, (x,0) \otimes (0,\ldots,0,1).
\end{multline*}
It is immediate to check that $|\nabla \psi(x)|\neq 0$ for all $x \in \R^d$, so it follows by the Inverse Function Theorem that $\phi$
is smooth as well.
Also, we can estimate
\begin{equation}\label{eqn:nabla-psi-est}
|\nabla \psi(x)| \leq 2\, |\psi'_0(r)|+ 2\, \frac{\sin( \psi_0(r))}{r}.
\end{equation}
Using now \eqref{eqn:psi-lip-0} and \eqref{eqn:psi-dec-0}, the first term in the right hand side above can be estimated with $2 D_0(0)$ for every $x\in \R^d$, and with 
$2D_0(r)$ for every $x\in \R^d$ such that $r=|x|\geq 2\pi/D_0(0)$. As regards the second term, for $r\in [0, \pi/D_0(0)]$ we have that 
\begin{equation}\label{eqn:1-int-est}
\frac{\sin( \psi_0(r))}{r} = \frac{\sin( c_0 \,D_0(0)\, r)}{r} \leq c_0 \,D_0(0),
\end{equation}
while for $r\in [\pi/D_0(0),\infty)$ we estimate the numerator with $1$ to get 
\begin{equation}\label{eqn:2-int-est}
\frac{\sin( \psi_0(r))}{r} \leq \frac{D_0(0)}{\pi}.
\end{equation}
Therefore,  since $c_0<1$, by \eqref{eqn:nabla-psi-est}, \eqref{eqn:1-int-est}, and \eqref{eqn:2-int-est} we get
\begin{equation}
\label{eqn:psi-lip0}
|\nabla \psi(x)| \leq 4 \,D_0(0) \qquad \forall \,x\in \R^d.
\end{equation}
Now,  for $r \in [2\pi/D_0(0),\infty)$, thanks to \eqref{eqn:psi-cont-2} and \eqref{eqn:psi-dec-0}
we can estimate
\begin{equation}\label{eqn:3-int-est}
\frac{\sin( \psi_0(r))}{r} 
= \frac{1}{r} \int_{r}^\infty - \cos(\psi_0(s))\, \psi'_{0}(s) \, ds 
\leq \frac{1}{r} \int_{r}^\infty |\psi'_{0}(s)| \, ds 
\leq \frac{1}{r}\int_{r}^\infty D_0(s) \, ds,
\end{equation}
thus by \eqref{eqn:psi-dec-0}, \eqref{eqn:nabla-psi-est}, and  \eqref{eqn:3-int-est}, we obtain
\begin{equation}
\label{eqn:psi-dec0}|\nabla \psi(x)| \leq 2 \,D_0(r) + \frac{2}{r} \int_r^\infty D_0(s) \, ds 
 \qquad  \forall \,x\in \R^d\setminus B_{2\pi/D_0(0)}.
\end{equation}
So,
provided we choose
$D_0(r) := \min\{4^{-1}, r^{-2}\} \,D(r)$ we obtain that \eqref{eqn:psi-lip0} implies \eqref{eqn:psi-lip}. Also,
by choosing $r_0 := 2\pi/D_0(0)>2$, from \eqref{eqn:psi-dec0} and because $D$ is monotone nonincreasing we deduce that
$$|\nabla \psi(x)| \leq \frac{ D(r)}{2} + \frac{1}{r} \int_r^\infty \frac{ {D(r)}}{s^2} \, ds 
 \leq \frac{D(r)}{2} + \frac{D(r)}{r^2} \leq D(r) 
 \qquad  \forall \,x\in \R^d\setminus B_{r_0},$$
proving \eqref{eqn:psi-dec} and concluding the proof.
\end{Proof}

\begin{proof}[Proof of Theorem~\ref{thm:repr-foliat}]
We first assume that $|\bb_t|\rho_t \in L^1_{\rm loc}([0,T]\times\R^d)$ and we prove the result in this case.
This is done in two steps:\\
- In Step 1, based on Lemma~\ref{lemma:decaying-diffeo}, we construct a diffeomorphism between $\R^d$ 
and $\sfera{d}\setminus\{N\}$ with the property that the vector field $\bb$, read on the sphere, becomes globally integrable.\\
- In Step 2 we associate a solution of the continuity equation on the sphere to the solution of the continuity equation $\rho_t$; 
this is done by adding a time-dependent mass in the north pole. Then the classical superposition principle applies on the sphere,
and this implies the desired superposition result for $\rho_t$.

Once the theorem has been proved for $|\bb_t|\rho_t \in L^1_{\rm loc}([0,T]\times\R^d)$, we show in Step 3 how
to handle the case when $\rho_t$ is a renormalized solution.

Finally,  in Step 4 we exploit the results of Section~\ref{sect:flow} to show that $\rho_t$ is transported by the Maximal Regular Flow.

\smallskip

\noindent {\bf Step 1: construction of a diffeomorphism between $\rncp{d}$ and $\sfera{d}$.}
We build a diffeomorphism $\psi\in C^\infty(\R^d;\sfera{d}\setminus\{N\})$ such that
\begin{equation}\label{eqn:psi-cont}
\lim_{x\to\infty} \psi(x) = N,
\end{equation}
\begin{equation}\label{eqn:diffeom-prop}
\int_0^T\int_{\R^d} |\nabla\psi(x)|\,|\bb_t(x)| \,\rho_t(x)\, dx\, dt <\infty.
\end{equation}
To this end, we apply Lemma~\ref{lemma:decaying-diffeo} with $D(r)=1$ in $[0,1)$ and $D(r) =(2^nC_n)^{-1}$ for $r\in [2^{n-1},2^n)$, where
$$
C_n:= 1+ \int_0^T\int_{B_{2^n}} |\bb_t(x)|\,\rho_t(x)\, dx\, dt \qquad \text{for every $n\in \N$}.
$$
In this way we obtain a smooth diffeomorphism $\psi$ which maps  $\R^d$ onto $\sfera{d} \setminus \{N\}$ 
such that \eqref{eqn:psi-cont} holds, $|\nabla\psi (x) | \leq 1$ on $\R^d$, and
\begin{equation}\label{eqn:psi-decade}
|\nabla \psi (x) | \leq {\frac{1}{2^nC_n} }\qquad \forall \,x\in B_{2^n} \setminus B_{2^{n-1}}, \; n \geq n_0,
\end{equation}
for some $n_0>0$.
Thanks to these facts we deduce that
\begin{equation}
\begin{split}
&\int_0^T\int_{\R^d} |\nabla\psi(x)|\,|\bb_t(x)|\, \rho_t(x)\, dx\, dt
\\
&\leq \int_0^T\int_{B_{2^{n_0}}} |\bb_t(x)|\, \rho_t(x)\, dx\,dt +
\sum_{i=n_0+1}^\infty \int_0^T\int_{B_{2^i}\setminus B_{2^{i-1}}}|\nabla\psi(x)|\, |\bb_t(x)| \,\rho_t(x)\, dx \, dt
\\
&\leq \int_0^T\int_{B_{2^{n_0}}} |\bb_t(x)| \,\rho_t(x)\, dx\,dt +
\sum_{i=n_0+1}^\infty \frac{1}{2^i}
 <\infty,
\end{split}
\end{equation}
which proves \eqref{eqn:diffeom-prop}.

\smallskip

\noindent {\bf Step 2: superposition principle on the sphere.}
We build $\eeta \in \Measuresp{C([0,T]; \rncp{d})}$ such that $|\eeta|(C([0,T]; \rncp{d})) \leq \sup_{t\in [0,T]} \|\rho_t\|_{L^1(\R^d)}$, $\eeta$ is concentrated on 
curves $\eta$ which are locally absolutely continuous integral curves of $\bb$ in $\{\eta\neq\infty\}$,
and whose marginal at time $t$ in $\R^d$ is $\rho_t\Leb{d}$.

Without loss of generality, possibly dividing every $\rho_t$ by $\sup_{t\in [0,T]} \|\rho_t\|_{L^1(\R^d)}$, we can assume that
$\sup_{t\in [0,T]} \|\rho_t\|_{L^1(\R^d)}=1$. Define 
$m_t:=\|\rho_t\|_{L^1(\R^d)}\leq 1$,
\begin{equation}\label{eqn:deftildeb}
\cc_t(y):=
\begin{cases}
\nabla\psi(\phi(y))\,\bb_t(\phi(y)) &\text{if $y\in\sfera{d}\setminus\{N\}$}\\
0 &\text{if $y=N$}
\end{cases}
\end{equation}
and
$$ \mu_t := \psi_\# (\rho_t\Leb{d}) + (1-m_t)\, \delta_N \in \Probabilities{\sfera{d}}, \qquad t\in [0,T].$$
Since $\cc_t(N)=0$ we can neglect the mass at $N=\psi(\infty)$ to get
\begin{eqnarray}
\nonumber
\int_0^T\int_{\sfera{d}} |\cc_t| \, d \mu_t \,dt &=& 
\int_0^T\int_{\sfera{d}\setminus\{N\}} |\nabla\psi|(\phi(y))\,|\bb_t|(\phi(y))\, d \mu_t(y)\,dt \\
\nonumber
&=& \int_0^T\int_{\R^d} |\nabla\psi|(x)\,|\bb_t|(x)\,\rho_t(x)\, dx\, dt <\infty,
\label{eqn:conto}
\end{eqnarray}
where in the last inequality we used \eqref{eqn:diffeom-prop}. 

We now show that the probability measure $\mu_t$ is a solution to the continuity equation on $\sfera{d} \subset \R^{d+1}$ with vector field
$\cc_t$.
To this end we first notice that, by the weak continuity in duality with $C_c(\R^d)$ of $\rho_t$ and by the fact that all the measures $\mu_t$ have unit mass,
we deduce that $\mu_t$ is weakly continuous in time.
Indeed, any limit point of $\mu_s$ as $s\to t$ is uniquely determined on $\sfera{d}\setminus\{N\}$, and then the mass normalization gives
that it is completely determined.
We want to prove that the function $t\mapsto \int_{\sfera{d}} \varphi \, d\mu_t$ is absolutely continuous and satisfies
\begin{equation}
\label{eqn:tilde-cont-eq}
\frac{d}{dt} \int_{\sfera{d}} \varphi \, d \mu_t  = 
 \int_{\sfera{d}} \cc_t \cdot \nabla \varphi  \, d\mu_t\qquad \text{a.e. on $(0,T)$}
\end{equation}
for every $\varphi \in C^\infty(\R^{d+1})$.
We remark that, since $\rho_t$ is a solution to the continuity equation in $\R^d$ with vector field $\bb_t$, changing variables with the diffeomorphism $\psi$ we obtain that
\eqref{eqn:tilde-cont-eq} holds for every $\varphi \in C^\infty_c(\R^{d+1} \setminus \{N\})$, hence we are left to check that \eqref{eqn:tilde-cont-eq} holds also when
 $\varphi$ is not necessarily $0$ in a neighborhood of the north pole.

Fix $\varphi \in C^\infty(\R^{d+1})$. Since $\mu_t(N)= 1- m_t = 1-\mu_t(\sfera{d}\setminus\{N\})$, for every $t \in [0,T]$ we have that
\begin{equation}
\label{eqn:cont-eq-rewritt}
\int_{\sfera{d}} \varphi \, d\mu_t = \int_{\sfera{d}\setminus\{ N\} } \varphi \, d\mu_t + 
\varphi(N)\, \mu_t(N) =\varphi(N)+ \int_{\sfera{d} } (\varphi- \varphi(N)) \, d\mu_t.
\end{equation}
Now, given $\eps>0$ let us consider a function $\chi_\eps \in C^\infty(\R^{d+1})$ which is $0$ in $B_{\eps}(N)$, $1$ outside $B_{2\eps}(N)$, and whose gradient is bounded by $2/\eps$. 
Since $\rho_t$ is a solution to the continuity equation in $\R^d$ and since $\chi_\eps \,(\varphi - \varphi(N))$ 
is a smooth, compactly supported function in $C^\infty_c(\R^{d+1} \setminus \{N\})$ we deduce that
\begin{equation}
\label{eqn:cont-eq-sfera-eps}
\begin{split}
\frac{d}{dt} \int_{\sfera{d}} \chi_\eps\, (\varphi- \varphi(N)) \, d\mu_t
&= \int_{\sfera{d} \setminus \{N\} } \cc_t \cdot \nabla [\chi_\eps\, (\varphi- \varphi(N))]  \, d\mu_t
\\
&= \int_{\sfera{d} \setminus \{N\} } (\varphi- \varphi(N))\,\cc_t \cdot \nabla \chi_\eps   \, d\mu_t
+
\int_{\sfera{d} \setminus \{N\} } \chi_\eps \,\cc_t \cdot \nabla\varphi  \, d\mu_t.
\end{split}
\end{equation}
To estimate the first term in the right-hand side of \eqref{eqn:cont-eq-sfera-eps} we use that $|\varphi- \varphi(N)| \leq \eps \| \nabla \varphi\|_{\infty}$ in $B_{\eps}(N)$ and that $|\nabla \chi_\eps |\leq 2/\eps$ to get that
$$ \bigg| \int_{\sfera{d} \setminus \{N\} } \cc_t \cdot \nabla \chi_\eps\, (\varphi- \varphi(N))  \, d\mu_t
\bigg|\leq
2\,\|\nabla \phi\|_{\infty} \int_{B_{2\eps}(N) \setminus B_{\eps}(N)}| \cc_t| \, d\mu_t,
$$
and notice the latter goes to $0$ in $L^1(0,T)$ as $\eps\to 0$ since $|\cc|$ is integrable with respect to $\mu_t\,dt$ in space-time thanks to \eqref{eqn:conto}.
Since the second term in the right-hand side of \eqref{eqn:cont-eq-sfera-eps} converges in $L^1(0,T)$ to 
$\int_{\sfera{d} \setminus \{N\} }\cc_t \cdot \nabla\varphi  \, d\mu_t$, taking the limit as $\eps \to 0$ in 
\eqref{eqn:cont-eq-sfera-eps} we obtain that $t\mapsto \int_{\sfera{d}} (\varphi- \varphi(N)) \, d\mu_t$
is absolutely continuous in $[0,T]$ and that
for a.e. $t\in (0,T)$ one has
$$\frac{d}{dt} \int_{\sfera{d}} (\varphi- \varphi(N)) \, d\mu_t  = 
 \int_{\sfera{d}} \cc_t \cdot \nabla \varphi  \, d\mu_t.
$$
Using the identity \eqref{eqn:cont-eq-rewritt}, this 
formula can be rewritten in the form \eqref{eqn:tilde-cont-eq}, as desired.
\smallskip

Since $\mu_t$ is a weakly continuous solution of the continuity equation and the integrability condition \eqref{eqn:conto} holds, 
we can apply the superposition principle (see  \cite[Theorem~12]{bologna} or \cite[Theorem~2.1]{amcofi}) to deduce the existence of 
a measure
$\ssigma \in \Probabilities{C([0,T]; \sfera{d})}$ which is concentrated on integral curves of $\cc$ and such that $(e_t)_\# \ssigma =  \mu_t$
for all $t\in [0,T]$.

We then consider $\phi: \sfera{d} \to \rncp{d}$ to be the inverse of $\psi$ extended to $N$ as $\phi(N)= \infty$,
and define $\Phi: C([0,T]; \sfera{d})\to C([0,T]; \rncp{d})$ as $\Phi(\eta) :=  \phi \circ \eta$.
Then the measure 
$$\eeta := \Phi_\# \ssigma \in \Probabilities{C([0,T]; \rncp{d})}$$
is concentrated on locally absolutely continuous integral curves of $\bb$ in the sense stated in \eqref{defn:gen-flow}, and 
$$(e_t)_\# \eeta \res \R^d = \phi_\# (e_t)_\# \ssigma \res \R^d = \phi_\# \mu_t \res \R^d = \rho_t\Leb{d} .$$

\smallskip

\noindent {\bf Step 3: the case of renormalized solutions.}
We now show how to prove the result when $\div \bb_t=0$ and $\rho_t$ is a renormalized solution.
Notice that in this case we have no local integrability information on $|\bb_t|\rho_t$, so the argument above does not apply. 
However, exploiting the fact that $\rho_t$ is renormalized we can easily reduce to that case.

More precisely, we begin by observing that, by a simple approximation argument,
the renormalization property (see Definition \ref{def:renormalized}) is still true when $\beta$ is a bounded Lipschitz function.
Thanks to this observation we consider, for $k \geq 0$, the functions
$$
\beta_k(s):=\left\{
\begin{array}{ll}
0 &\text{if $s \leq k$},\\
s-k &\text{if $k \leq s \leq k+1$},\\
1 &\text{if $s \geq k+1$}.
\end{array}
\right.
$$
Since $\rho_t$ is renormalized, $\beta_k(\rho_t)$ is a bounded distributional solution of the continuity equation,
hence by Steps 1-2 above there exists a measure
$\eeta_k \in \Measuresp{C([0,T]; \rncp{d})}$ with $$|\eeta_k|(C([0,T]; \rncp{d})) \leq \sup\limits_{t\in [0,T]} \|\beta_k(\rho_t)\|_{L^1(\R^d)},$$
which is concentrated on the set defined in \eqref{defn:gen-flow}  and satisfies
$$(e_t)_\# \eeta_k \res \R^d= \beta_k(\rho_t)\,\Leb{d} \qquad \mbox{for every }t\in [0,T].$$
Since $\sum_{k\geq 0}\beta_k(s)=s$, we immediately deduce that the measure $\eeta:=\sum_{k\geq 0}\eeta_k$ satisfies all the desired properties.

\smallskip

\noindent {\bf Step 4: representation via the Maximal Regular Flow.}
Under the additional assumption that $\bb$ is divergence-free and satisfies {\bf (A1)}-{\bf (A2)} of Section~\ref{sec:maxflow},
if $\rho_t \in L^\infty((0,T) \times \R^d)$ (resp. that $\rho_t$ is renormalized) then $\eeta$ (resp. every $\eeta_k$) is a regular generalized flow and by Theorem~\ref{thm:no-split-until-explosion} it is transported by the Maximal Regular Flow.
\end{proof}


\begin{thebibliography}{99}

\bibitem{ambrosio} {\sc L.~Ambrosio:} {\em Transport equation and Cauchy
problem for $BV$ vector fields.}   Invent.\ Math., {\bf 158} (2004), 227--260.

\bibitem{amcofi} {\sc L.\ Ambrosio, M.\ Colombo \& A.\ Figalli:} {\em Existence and uniqueness of Maximal Regular Flows for non-smooth vector fields.}
Arch. Ration. Mech. Anal., to appear.

\bibitem{bologna} {\sc L.~Ambrosio \& G.~Crippa:}
{\em Existence, uniqueness, stability and differentiability properties
of the flow associated to weakly differentiable vector fields.}
Lecture Notes of the Unione Matematica Italiana, {\bf 5} (2008), 3--54.

\bibitem{amgisa} {\sc L.~Ambrosio, N.~Gigli \& G.~Savar\'e:}
{\em Gradient flows in metric spaces and in the Wasserstein space of probability measures.}
Lectures in Mathematics, ETH Zurich, Birkh\"auser, 2005, second edition in 2008.

\bibitem{ars}
{\sc A. A. Arsen'ev:} {\em Existence in the large of a weak solution of Vlasov's system of equations.} \v Z Vy\v cisl. Mat. i Mat. Fiz., {\bf 15} (1975), 136--147, 276.


\bibitem{bard} {\sc C.~Bardos \& P.~Degond:} {\em Global existence for the Vlasov-Poisson equation in $3$ space variables with small initial data.} Ann.~Inst.~H.~Poincar\'e Anal.~Non Lin\'eaire, \textbf{2} (1985), 101--118.

\bibitem{batt} {\sc J. Batt:} {\em Global symmetric solutions of the initial value problem of stellar dynamics.} J.~Differential Equations, \textbf{25} (1977), 342--364.

\bibitem{bbc-abstract} {\sc A.~Bohun, F.~Bouchut \& G.~Crippa:} {\em Lagrangian flows for vector fields with anisotropic regularity}. Preprint (2014).

\bibitem{bbc-VP} {\sc A.~Bohun, F.~Bouchut \& G.~Crippa:} {\em Lagrangian solutions to the Vlasov-Poisson equation with
$L^1$ density.} Preprint (2014).

\bibitem{poly} {\sc F.~Bouchut \& G.~Crippa:}
{\em Equations de transport \`a coefficient dont le gradient est donn\'e par une int\'egrale singuli\`ere.
(French) [Transport equations with a coefficient whose gradient is given by a singular integral].} 
S\'eminaire: \'Equations aux D\'eriv\'ees Partielles. 2007--2008, Exp. No. I, 15 pp., S\'emin. \'Equ. D\'eriv. Partielles, \'Ecole Polytech., Palaiseau, 2009.

\bibitem{boucrising} {\sc F.~Bouchut \& G.~Crippa:}
{\em Lagrangian flows for vector fields with gradient given by a singular integral.}
J.~Hyperbolic Differ.~Equ., \textbf{10} (2013), 235--282.

\bibitem{cmp} {\sc E.~Caglioti, S.~Caprino, C.~Marchioro \& M.~Pulvirenti:} {\em The Vlasov equation with infinite mass.} Arch.~Ration.~Mech.~Anal., \textbf{159} (2001), 85--108.

\bibitem{maria} {\sc M. Colombo:} {\em Flows of non-smooth vector fields and degenerate elliptic equations.} Phd Thesis (2015).

\bibitem{crippade} {\sc G.~Crippa \& C.~De Lellis:}
{\em Estimates for transport equations and regularity of the
DiPerna-Lions flow.} J. Reine Angew. Math., {\bf 616} (2008), 15--46.

\bibitem{dpl} {\sc R.~J.~DiPerna \& P.-L.~Lions:} {\em Solutions globales d'Ž\'equations du type Vlasov-Poisson.} (French) [Global solutions of Vlasov-Poisson type equations] C. R. Acad. Sci. Paris SŽr. I Math., \textbf{307} (1988), 655--658.

\bibitem{dpl3} {\sc R.~J.~DiPerna \& P.-L.~Lions:} {\em Global weak solutions of kinetic equations.} Rend. Sem. Mat. Univ. Politec. Torino, \textbf{46} (1988), 259--288.

\bibitem{dpl2} {\sc R.~J.~DiPerna \& P.-L.~Lions:} {\em Global weak solutions of Vlasov-Maxwell systems.} Comm.\ Pure Appl.\ Math., \textbf{42} (1989), 729--757.

\bibitem{lions} {\sc R.~J.~DiPerna \& P.-L.~Lions:}
{\em Ordinary differential equations, transport theory and Sobolev spaces.}
Invent. Math., {\bf 98} (1989), 511--547.

\bibitem{dobby} {\sc R.~L.~Dobrushin:} {\em Vlasov Equations.} Funktsional. Anal. i Prilozhen., {\bf 13} (1979),
48--58.

\bibitem{GT}
{\sc D. Gilbarg \& N. S. Trudinger:} {\em  Elliptic partial differential equations of second order.} Reprint of the 1998 edition. Classics in Mathematics. Springer-Verlag, Berlin, 2001. xiv+517 pp.


\bibitem{hors}  {\sc E.~Horst:} {\em On the classical solutions of the initial value problem for the unmodified nonlinear Vlasov equation. I. General theory. II: Special cases.} Math.~Methods Appl.~Sci., \textbf{3} (1981), 229--248, \textbf{4}(1982), 19--32.

\bibitem{hh}
{\sc E. Horst \& R. Hunze:}
{\em Weak solutions of the initial value problem for the unmodified nonlinear Vlasov equation.}
Math. Methods Appl. Sci., \textbf{6} (1984), 262--279. 

\bibitem{in}
{\sc R. Illner \& H. Neunzert:}
{\em An existence theorem for the unmodified Vlasov equation.}
 Math. Methods Appl. Sci., {\bf 1} (1979), 530--544.

\bibitem{iord} {\sc S.~V.~Iordanskii:} {\em The Cauchy problem for the kinetic equation of plasma.} Trudy Mat.~Inst.~Steklov., \textbf{60} (1961), 181--194. 


\bibitem{KM} {\sc J.~Kristensen \& G.~Mingione}:
{\em The singular set of minima of integral functionals.} Arch.~Ration.~Mech.~Anal., {\bf 180} (2006), 331--398.


\bibitem{Lebris} {\sc C.~Le Bris \& P.-L.~Lions:}
{\em Renormalized solutions of some transport equations with partially
$W^{1,1}$ velocities and applications.} Ann. Mat. Pura Appl., {\bf 183} (2003),
97--130.

\bibitem{lp} {\sc P.-L.~Lions \& B.~Perthame:} {\em Propagation of moments and regularity for the 3-dimensional Vlasov-Poisson system.} Invent.\ Math., \textbf{105} (1991), 415--430.

\bibitem{loe}
{\sc G. Loeper:} {\em Uniqueness of the solution to the Vlasov-Poisson system with bounded density.} J. Math. Pures Appl., {\bf 86} (2006), 68--79.

\bibitem{pe} {\sc B. Perthame:} {\em Time decay, propagation of low moments and dispersive effects for kinetic equations.} Comm. Partial Differential Equations, \textbf{21} (1996), 659--686.

\bibitem{pfaf} {\sc K.~Pfaffelmoser:} {\em Global classical solutions of the Vlasov-Poisson system in three dimensions for general initial data.} J. Differential Equations, \textbf{95} (1992), 281--303.

\bibitem{rein}{\sc G.~Rein:} {\em Collisionless kinetic equations from astrophysics: the Vlasov-Poisson system.} Handbook of differential equations: evolutionary equations. Vol. III, 383--476, Handb. Differ. Equ., Elsevier/North-Holland, Amsterdam, 2007.

\bibitem{scha} {\sc J.~Schaeffer:} {\em Global existence for the Poisson-Vlasov system with nearly symmetric data.} J. Differential Equations, \textbf{69} (1987), 111--148.

\bibitem{uo} {\sc S. Ukai \& T. Okabe:} {\em On classical solutions in the large in time of two-dimensional Vlasov's equation.} Osaka J.~Math., \textbf{15} (1978), 245--261. 

\bibitem{woll} {\sc S.~Wollman:} {\em Global-in-time solutions of the two-dimensional Vlasov-Poisson system.} Comm.~Pure Appl.~Math., \textbf{33} (1980), 173--197.

\bibitem{za} {\sc X.~Zhang \& J.~Wei:} {\em The Vlasov-Poisson system with infinite kinetic energy and initial data in $L\sp p(\mathbb R\sp 6)$.}
 J. Math. Anal. Appl., \textbf{341} (2008), 548--558. 

\end{thebibliography}
\end{document}